\newcommand{\comb}[2]{\left(\begin{array}{c} #1 \\ #2 \end{array}\right)}
\newcommand{\trp}{^{\tt t}}
\newcommand{\plim}{{\rm plim}}
\newcommand{\condist}{\stackrel{d}{\longrightarrow}}
\newcommand{\conprob}{\stackrel{p}{\longrightarrow}}
\newcommand{\conuprob}{\stackrel{up}{\longrightarrow}}
\newtheorem{prop}{Proposition}
\newtheorem{theo}{Theorem}
\newtheorem{lemm}{Lemma}
\newcommand{\email}[1]{{\em E-Mail:} {#1}}
\newenvironment{keywords}{\noindent {\em Key Words and Phrases:}}{}
\newenvironment{proof}{\medskip {\noindent {\bf Proof:}}}{$\parallel$ \medskip}
\begin{document}

\title{Model Selection and Estimation with
Quantal-Response Data in Benchmark Risk Assessment}

\author{Edsel A.\ Pe\~na\footnote{Department of Statistics, University of South Carolina, Columbia, SC 29208 USA.  \email{pena@stat.sc.edu}}
\and
Wensong Wu\footnote{Department of Mathematics and Statistics, Florida International University, Miami, FL 33199 USA. \email{wenswu@fiu.edu}}
\and
Walter Piegorsch\footnote{Program in Statistics and BIO5 Institute,
University of Arizona, Tucson, AZ 85721 USA.  \email{piegorsch@math.arizona.edu}}
\and
Ronald W.\ West\footnote{Department of Statistics, North Carolina State University, Raleigh, NC 27695 USA. \email{websterwest@gmail.com}}
\and
LingLing An\footnote{Program of Statistics, Department of Agricultural and Biosystems Engineering,
University of Arizona, Tucson, AZ 85721 USA.  \email{anling@email.arizona.edu}}
}







\maketitle

\begin{abstract}
This paper describes several approaches for estimating the benchmark dose (BMD) 
in a risk assessment study with quantal dose-response data
and when there are competing model classes
for the dose-response function. Strategies involving a two-step
approach, a model-averaging approach, a focused-inference approach, and a
nonparametric approach based on a PAVA-based estimator of the dose-response function are described and compared.
Attention is raised to the perils involved in data ``double-dipping'' and
the need to adjust for the model-selection stage in the estimation procedure.
Simulation results are presented comparing the performance of five model
selectors and eight BMD estimators.
An illustration using a real quantal-response data set from a carcinogenecity study is provided.
\end{abstract}

%
%

\begin{keywords}
{Focused-inference approach};
{Information measures};
{Model selection problem};
{Model averaging};
{Pooled adjacent violators algorithm (PAVA)};
{Quantal-dose response};
{Two-step estimation approach}.
\end{keywords}

\section{Introduction and Motivation}
\label{sect-Introduction}

The traditional approach to statistical
inference assumes that a random entity $\mathbf{Y}$,
taking values in a sample space  $\mathcal{Y}$, is observable.
Such a $\mathbf{Y}$ represents the outcome of an experiment, a study, or a survey.
The (joint) distribution function
$F$ of $\mathbf{Y}$ is assumed to belong to a specified model class
$\mathcal{F}$ of distribution functions
on $\mathcal{Y}$. The class $\mathcal{F}$ may be parametrically
or nonparametrically specified.
For example, in quantal-response risk assessment studies of exposures to
hazardous agents, the primary focus of this paper, there will
be pre-specified doses $0 = d_1 < d_2 < \ldots < d_J < \infty$, and for each dose
$d_j$, there is an observable random variable $Y_j$,
which is binomially distributed with parameters
$N_j$ and $\pi(d_j)$. Here, $N_j$ represents the number of units placed
on test at dose $d_j$, $\pi(\cdot)$ is the dose-response function, $\pi(d_j)$ is the probability of a
unit at dose $d_j$ exhibiting the
adverse event of interest, and $Y_j$ is the
total out of the $N_j$ units that exhibit
the adverse event \cite[Ch.~4]{PieBai05}.
Commonly, we assume
that the $N_j$s are known, the $Y_j$s are independent, and
$\pi(\cdot)$ belongs to some model class $\mathcal{M}$. An example
of a model class in this setting is
\begin{displaymath}
\mathcal{M}_1 = \{\pi_1(d;\theta_0,\theta_1) = 1 - \exp\{-\theta_0 -\theta_1 d\}: \theta_0 \in \Re_+,
\theta_1 \in \Re_+\}.
\end{displaymath}
This
is the linear complementary log model, also known as the quantal-linear
model or the one-stage
model
in carcinogenesis testing \citep{BucPieWes09}.
Typically, statistical attention will focus on making inferences about
the unknown parameters, e.g., constructing a $100(1-\alpha)\%$ confidence
interval on $\theta_1$.  In risk assessment studies, however, the function
$\pi(d_j)$ is specifically used to model the risk of exhibiting an adverse
response or reaction at dose $d_j$.  Attention is then directed at using
information in $\pi(\cdot)$ to estimate risk at low doses.  By inverting
the estimated dose-response relationship, the analyst can calculate the dose
level at which a predetermined benchmark response (BMR) for the adverse response
is attained.  The corresponding \textit{Benchmark Dose} ($\mbox{\emph{BMD}}$), is an
important quantity in deriving regulatory limits for modern risk
management \cite[\S~4.3]{PieBai05}. $\mbox{\emph{BMD}}$s are employed increasingly in
quantitative risk analyses for setting acceptable levels of human exposure
or to establish modern low-exposure guidelines for hazardous
environmental or chemical agents \citep{SanVicFal08}.

If $\mathbf{Y} \equiv (Y_j,j=1,2,
\ldots,J)$ is the observable vector from such a study, then its joint probability
mass function $p_{\mathbf{Y}}$, which determines $F$, is
\begin{displaymath}
p_{\mathbf{Y}}(y_1,\ldots,y_J;\theta_0,\theta_1) =
\prod_{j=1}^J \comb{N_j}{y_j} \pi_1(d_j;\theta_0,\theta_1)^{y_j}
[1 - \pi_1(d_j;\theta_0,\theta_1)]^{N_j-y_j}
\end{displaymath}
for $y_j \in \{0,1,2,\ldots,N_j\}$.
In this conventional framework with one model class,
methods of inference, e.g., estimation, hypothesis testing,
interval estimation, or prediction, are well-developed, relying on
the maximum likelihood (ML) principle, the Neyman-Pearson
hypothesis testing framework, or the Bayesian paradigm
\citep{DeuGreHab10,Mor92,MR1718844,MR752447}.

Recent years, however, have seen greater appreciation for settings
with more than one model class for $F$ \citep{BurAnd02,ClaHjo08}. Such
situations arise in a variety of scientific settings, including engineering,
reliability, economics, and in particular, in the risk assessment problem emphasized herein \citep{BaiNobWhe05, SanFalVic02}.
An impetus for considering several model classes is the desire for more
inferential robustness without becoming fully nonparametric. For example, in the
quantal-response setting,
the dose-response function $\pi(\cdot)$, aside from being
possibly in the model class $\mathcal{M}_1$, may alternatively belong to the model class
\begin{displaymath}
\mathcal{M}_2 = \{\pi_2(d;\eta_0,\eta_1) = {[1 + \exp\{-(\eta_0 + \eta_1 d)\}]}^{-1}: \eta_0 \in \Re, \eta_1 \in \Re\},
\end{displaymath}

In settings with multiple model classes, a seemingly natural approach is to use
the data to first select the model class, and then use the same data {\em{again}}
to perform inference in the chosen model class. However, caution needs to be exercised
since, when not properly adjusting for such data ``double-dipping,''
detrimental consequences, such as underestimation of standard
errors, loss of control of Type I error probabilities,
or nonfulfillment of coverage probabilities, ensue \cite[\S~7.4]{ClaHjo08}.
It is of importance to examine issues pertaining to these
statistical problems when operating
with several possible model classes and to develop
appropriate statistical procedures that properly adjust for data re-use.
This paper is targeted for this purpose, with particular emphasis on
quantal dose-response modeling and its use in estimating the
benchmark dose for low-dose risk assessment.

\section{Mathematical Underpinnings}
\label{section-Background}

We describe in this section the mathematical framework  and
formally state the  problems of interest.
Consider a  quantal-response study where the collection
$(\mathbf{N},\mathbf{d}) = \{(N_j,d_j), j=1,2,\ldots,J\}$
is given and the random observables
are $\mathbf{Y} = (Y_j, j=1,2,\ldots,J),$ where
\begin{equation}
\label{distribution of Yj} Y_j | (N_j,d_j) \sim B(N_j,\pi(d_j)),
j=1,2,\ldots,J.
\end{equation}
%
Here $0 = d_1 < d_2 < \ldots < d_J$ are the doses,
$\pi(\cdot)$ is the dose-response function, and $B(n,\pi)$ is a
binomial distribution with parameters $(n,\pi)$. The entire data ensemble
will be denoted by $(\mathbf{Y},\mathbf{N},\mathbf{d}) =
\{(Y_j,N_j,d_j), j=1,2,\ldots,J\}$.

In most risk-analytic studies, the differential risk adjusted for
any spontaneous or background effect is typically of interest.  This
leads to consideration of risks in excess of the background.  Quantifying
this, suppose the dose-response function is $\pi: \Re \rightarrow [0,1]$.
Then, the \textit{extra risk function}, which is relative to the background
risk, is
\begin{equation}
\label{extra risk} \pi_E(d) = \frac{\pi(d) - \pi(0)}{1 - \pi(0)}.
\end{equation}
Typically it is assumed that the
mapping $d \mapsto \pi(d)$ is monotone increasing, hence
$d \mapsto \pi_E(d)$ is also monotone increasing.
Given a BMR value $q \in (0,1)$, the BMD at this risk level $q$, denoted $\mbox{\emph{BMD}}(q)$, is the dose $d \ge 0$ satisfying
%
$\pi_E(d) = q.$
%
For brevity, instead of writing $\mbox{\emph{BMD}}(q)$, we
instead use the notation $\xi_q$, so
\begin{equation}
\label{BMD}
\xi_q \equiv BMD(q) = \pi_E^{-1}(q)
\end{equation}
where $\pi_E^{-1}(\cdot)$ is the inverse function of $\pi_E(\cdot)$.
Observe that $\xi_q$ is determined by the dose-response function and
its parameters.
In this paper we will mainly be concerned with obtaining estimators of $\xi_q$
and their properties. 

We describe the mathematical set-up of interest.
Our underlying assumption is that the unknown dose-response function
$\pi(\cdot)$ is a member of the collection
\begin{equation}
\label{big model} \mathcal{M} = \mathcal{M}_0 \cup \left( \cup_{m=1}^M
\cup_{l=1}^{L_m} \mathcal{M}_{ml} \right),
\end{equation}
where $\mathcal{M}_0$ and $\mathcal{M}_{ml}, l=1,2,\ldots,L_m; m =
1,2,\ldots,M$, are model classes of dose-response functions. We
 assume that these model classes satisfy the following
conditions:
\begin{itemize}
\item[] {\bf (C1)} \label{condition 1}
For each $m = 1,2,\ldots,M$, we have
%
$\mathcal{M}_0 \subset \mathcal{M}_{m1}
\subset \mathcal{M}_{m2} \subset \ldots \subset \mathcal{M}_{mL_m}.$
%

\item[] {\bf (C2)} \label{condition 2}
The model classes are, for $m=1,2,\ldots,M$ and $l=1,2,\ldots,L_m$, of forms
\begin{eqnarray*}
& \mathcal{M}_0 =  \{\pi_0(\cdot;\theta_0): \theta_0 \in \Theta_0\}; & \\
& \mathcal{M}_{ml}  =
\{\pi_{ml}(\cdot;\theta_0,\theta_{m1},\ldots,\theta_{ml}): \theta_0
\in \Theta_0, \theta_{mj} \in \Theta_{mj}, j=1,2,\ldots,l\}, &
\end{eqnarray*}
with $\Theta_0$ an open
subset of $\Re^{g_0}$ and $\Theta_{mj}$ an open subset of
$\Re^{g_{mj}}$, and with the dimensions $g_0$ and $g_{mj}$s being
known.

\item[] {\bf (C3)} \label{condition 3}
For each $ m = 1,2,\ldots,M$ and $l=1,2,\ldots,L_m$, there are unique
elements $\{\theta_{ml}^0: l=1,2,\ldots,L_m; m=1,2,\ldots,M\}$ such
that $\theta_{ml}^0 \in \Theta_{ml}$ and
\begin{displaymath}
\pi_{ml-1}(\cdot;\theta_0,\theta_{m1},\theta_{m2},\ldots,\theta_{ml-1})
=
\pi_{ml}(\cdot;\theta_0,\theta_{m1},\theta_{m2},\ldots,\theta_{ml-1},\theta_{ml}^0).
\end{displaymath}

\item[] {\bf (C4)} \label{condition 4}
For each $m \ne m^\prime$, and for $l = 1,2,\ldots,L_m$ and $l^\prime = 1,2,\ldots,L_{m^\prime}$,
\begin{displaymath}
\left( \mathcal{M}_{ml} \setminus \mathcal{M}_0\right) \cap
\left( \mathcal{M}_{m^\prime l^\prime} \setminus \mathcal{M}_0 \right) = \emptyset.
\end{displaymath}
\end{itemize}

Conditions (C1-C3) imply that $\mathcal{M}_0$,
which may be empty, is the smallest model class, and for each model type
$m$, there is a nested structure among the layers
$(\mathcal{M}_{ml},l=1,2,\ldots,L_m)$.
Condition (C4) requires that the $M$ model classes may only
intersect at the smallest model class $\mathcal{M}_0$.
Pictorially, the structure and inter-relationships
among the model classes are shown in Figure \ref{figure-model class structure}.
There are several commonly-used dose-response model classes.
See, for instance, \cite{BaiNobWhe05,WheBai07,WheBai09} for some
examples.

With this mathematical framework in hand, our main objective is to
obtain estimators of $\xi_q$ based on the observable $(\mathbf{Y},\mathbf{N},\mathbf{d})$.
Apart from
estimation of $\xi_q$, it is also of interest to
determine the smallest or most parsimonious model
class containing the dose-response function $\pi(\cdot)$,
the so-called model selection problem.

\begin{figure}[h]
\caption{Depiction of the interrelationships among the model
classes. The columns represent the model class type, while the rows
are the depths within the model class type. Note that the maximal depths, $L_m$, within each
$m$th model class type may differ.} \label{figure-model class structure}
\begin{center}
\begin{tabular}{||c||c|c|c|c||} \hline\hline
Model Class Type Depth & \multicolumn{4}{c||}{Model Class Type} \\
$l$ & \multicolumn{4}{c||}{$m$} \\ \hline\hline
& \multicolumn{4}{c||}{} \\
0 & \multicolumn{4}{c||}{$\mathcal{M}_0$} \\ \cline{1-1}
& $\cap$ & $\cap$ &  & $\cap$ \\
1 & $\mathcal{M}_{11}$ & $\mathcal{M}_{21}$ & $\cdots$ & $\mathcal{M}_{M1}$ \\ \cline{1-1}
& $\cap$ & $\cap$ &  & $\cap$ \\
2 & $\mathcal{M}_{12}$ & $\mathcal{M}_{22}$ & $\cdots$ & $\mathcal{M}_{M2}$ \\ \cline{1-1}
& $\cap$ & $\cap$ & & $\cap$ \\
$\vdots$ & $\vdots$ & $\vdots$ & $\vdots$ & $\vdots$ \\ \cline{1-1}
& $\cap$ & $\cap$ & & $\cap$ \\
$L_m$ & $\mathcal{M}_{1L_1}$ & $\mathcal{M}_{2L_2}$ & $\cdots$ & $\mathcal{M}_{ML_M}$ \\
\hline\hline
\end{tabular}
\end{center}
\end{figure}

\section{Model Class Selection}
\label{sect-Model Selection}

There are several approaches to the model selection problem. Most
apply some form of information-theoretic metric to distinguish among
competing models/model classes.  Two popular measures are Akaike's
information criterion (AIC) \citep{Aka73} and the Bayesian
information criterion (BIC) \citep{Sch78}. Both criteria are
likelihood-based.  Other forms are possible, including a
second-order adjusted AIC \citep{MR1016020}, the Focused IC
\cite[\S~6.2]{ClaHjo08}, Takeuchi's IC \citep{Tak76}, the
Kullback-Leibler IC \citep{MR1707178}. 
The AIC and BIC remain the most popular forms used
in risk-analytic settings
\citep{SanFalVic02,BaiNobWhe05,FaeAerGey07}.

Given the quantal-response data
$(\mathbf{y},\mathbf{N},\mathbf{d})$ and a model $\mathcal{M}$ which
specifies a dose-response function $\pi(\cdot;\theta)$ with $\theta
\in \Theta$, the likelihood function is
\begin{equation}
\label{lik func}
L(\theta|\mathcal{M}) = L(\theta | \mathcal{M}; (\mathbf{y},\mathbf{N},\mathbf{d})) =
\prod_{j=1}^J \comb{N_j}{y_j} \pi(d_j;\theta)^{y_j}
[1 - \pi(d_j;\theta)]^{N_j - y_j},
\end{equation}
so the relevant portion of the log-likelihood function
$\log L(\theta|\mathcal{M})$ is
\begin{equation}
l(\theta|\mathcal{M}) =
\sum_{j=1}^J \left\{ y_j \log \pi(d_j;\theta) +
(N_j - y_j) \log [1 - \pi(d_j;\theta)]\right\}.
\label{log-lik func}
\end{equation}
The maximum likelihood estimate (MLE) of $\theta$ under $\mathcal{M}$ is
\begin{equation}
\label{MLE}
\hat{\theta} = \hat{\theta}(\mathbf{y},\mathbf{N},\mathbf{d}) =
\arg\max_{\theta \in \Theta} L(\theta|\mathcal{M}; (\mathbf{y},\mathbf{N},\mathbf{d})) =
\arg\max_{\theta \in \Theta} l(\theta|\mathcal{M}).
\end{equation}
The AIC for model $\mathcal{M}$ is
\begin{equation}
\label{AIC}
AIC(\mathcal{M}) =-2 l(\hat{\theta}|\mathcal{M}) + 2g
\end{equation}
where $g$ is the dimension of $\Theta$, The BIC is
\begin{equation}
\label{BIC}
BIC(\mathcal{M}) =-2 l(\hat{\theta}|\mathcal{M}) + g\log(n)
\end{equation}
where $n = \sum_{j=1}^K N_j$ is the total number of units.

In the presence of several competing models $\{\mathcal{M}_m: m = 1,2,\ldots,M\}$, the
index of the chosen model class using the AIC or BIC approaches are, respectively,
\begin{eqnarray}
\hat{M}_{AIC}(\mathbf{y},\mathbf{N},\mathbf{d}) & = &
\arg\min_{m=1,2,\ldots,M} AIC(\mathcal{M}_m|\mathbf{y},\mathbf{N},\mathbf{d});
\label{AIC chosen model index} \\
\hat{M}_{BIC}(\mathbf{y},\mathbf{N},\mathbf{d}) & = &
\arg\min_{m=1,2,\ldots,M} BIC(\mathcal{M}_m|\mathbf{y},\mathbf{N},\mathbf{d}).
\label{BIC chosen model index}
\end{eqnarray}

We employ these two model selection approaches to the quantal-response
problem. To simplify our notation, let $\mathcal{M}_{m0}
\equiv \mathcal{M}_0$ and $\Theta_{m0} \equiv \Theta_0$. Recall
that
\begin{displaymath}
g_{ml} = \dim{\Theta}_{ml},\ l=0,1,2,\ldots,L_m; m=1,2,\ldots,M,
\end{displaymath}
the dimension of the sub-parameter space $\Theta_{ml}$ of
$\mathcal{M}_{ml}$. The AIC and BIC become, for
$l=0,1,2,\ldots,L_m; m=1,2,\ldots,M$,
\begin{eqnarray}
AIC(\mathcal{M}_{ml}) & = & -2l(\hat{\theta}_{0},\hat{\theta}_{m1},\ldots,\hat{\theta}_{ml}|
\mathcal{M}_{ml}) + 2\left(g_0 + \sum_{i=1}^l g_{mi}\right);
\label{model class AIC} \\
BIC(\mathcal{M}_{ml}) & = & -2l(\hat{\theta}_{0},\hat{\theta}_{m1},\ldots,\hat{\theta}_{ml}|
\mathcal{M}_{ml}) + \left(g_0 + \sum_{i=1}^l g_{mi}\right) \log(n),
\label{model class BIC}
\end{eqnarray}
where $(\hat{\theta}_0,\hat{\theta}_{m1},\ldots,\hat{\theta}_{ml})$ is the MLE of
$(\theta_0,\theta_{m1},\ldots,\theta_{ml})$ under model $\mathcal{M}_{ml}$.
Observe that $(\hat{\theta}_0,\hat{\theta}_{m1},\ldots,\hat{\theta}_{ml-1}),$
the MLE under model class $\mathcal{M}_{ml-1}$
coincides with the {\em restricted} MLE of $(\theta_0,\theta_{m1},\ldots,\theta_{ml-1})$
under model class $\mathcal{M}_{ml}$ under
the restriction $\theta_{ml} = \theta_{ml}^0$.

Model class $\mathcal{M}_{m^*l^*}$ will then be chosen according to the AIC approach whenever
\begin{displaymath}
AIC(\mathcal{M}_{m^*l^*}) = \min_{m=1,2,\ldots,M}\min_{l=0,1,2\ldots,L_m}
AIC(\mathcal{M}_{ml});
\end{displaymath}
while it will be chosen via the BIC approach whenever
\begin{displaymath}
BIC(\mathcal{M}_{m^*l^*}) = \min_{m=1,2,\ldots,M}\min_{l=0,1,2\ldots,L_m}
BIC(\mathcal{M}_{ml}).
\end{displaymath}

We point out that, though of interest by itself,
the model class selection problem is not the primary aim
in these risk benchmarking studies. Rather, of more importance
is estimation of the BMD $\xi_q$.
Thus, the model class selection aspect,
though possibly crucial in the inferential process,
acquires a somewhat secondary role.
In the next two sections, we describe approaches for
estimating $\xi_q$
which take into account the model class selection step.

\section{Two-Step BMD Estimation Approach}
\label{sect-Two Step Approach}

Let us  suppose that the true underlying model class is $\mathcal{M}_{ml}$
for some $l \in \{0,1,2,\ldots,L_m\}$ and $m \in \{1,2,\ldots,M\}$, with true dose-response
function $\pi_{ml}(\cdot;\theta_0,\theta_{m1},\ldots,\theta_{ml}).$ Denote its
associated extra risk function by
\begin{displaymath}
\pi_{ml,E}(d;\theta_0,\theta_{m1},\ldots,\theta_{ml}) =
\frac{\pi_{ml}(d;\theta_0,\theta_{m1},\ldots,\theta_{ml}) -
\pi_{ml}(0;\theta_0,\theta_{m1},\ldots,\theta_{ml})}
{1 - \pi_{ml}(0;\theta_0,\theta_{m1},\ldots,\theta_{ml})},
\end{displaymath}
and the inverse of this extra risk function by
$\pi_{ml,E}^{-1}(\cdot;\theta_0,\theta_{m1},\ldots,\theta_{ml})$. Then, under model
class $\mathcal{M}_{ml}$, the BMD for a fixed $q = \mbox{BMR}$ is
\begin{equation}
\label{BMD in Mml}
\xi_{q|ml}(\theta_0,\theta_{m1},\ldots,\theta_{ml}) =
\pi_{ml,E}^{-1}(q;\theta_0,\theta_{m1},\ldots,\theta_{ml}).
\end{equation}
It is natural to apply the substitution estimator where
$(\theta_0,\theta_{m1},\ldots,\theta_{ml})$ in (\ref{BMD in Mml})
is replaced by its ML estimate
{\em under} model class $\mathcal{M}_{ml}$. Thus, if $(\hat{\theta}_0,\hat{\theta}_{m1},
\ldots,\hat{\theta}_{ml})$ is the ML estimator under model class $\mathcal{M}_{ml}$
based on $(\mathbf{Y},\mathbf{N},\mathbf{d})$, then the estimator of $\xi_{q|ml}$ is
\begin{equation}
\label{Estimator BMD in Mml}
\hat{\xi}_{q|ml} = 
\xi_{q|ml}(\hat{\theta}_0,\hat{\theta}_{m1},\ldots,\hat{\theta}_{ml}) =
\pi_{ml,E}^{-1}(q;\hat{\theta}_0,\hat{\theta}_{m1},\ldots,\hat{\theta}_{ml}).
\end{equation}

One approach to estimating the BMD
among several competing model classes is
to combine the model selection and estimation steps into
a {\em two-step approach} \citep{HwaYooKim09,FaeAerGey07,SanFalVic02}.
The idea is to use the data to select the model class,
either via AIC or BIC, and having chosen the model class, obtain the
estimate of the BMD in the chosen model class, but with the estimate still based on
the same data utilized in the model class selection step.

Under our framework let us then suppose that we have decided on a model class selection procedure,
either AIC or BIC. Denote by
\begin{equation}
\label{estimated model class indices}
(\hat{m},\hat{l}) \equiv \left(\hat{m}(\mathbf{Y},\mathbf{N},\mathbf{d}),
\hat{l}(\mathbf{Y},\mathbf{N},\mathbf{d})\right)
\end{equation}
the resulting
model type index and the model type depth, respectively, of the selected model
class. The two-step estimator of the BMD is then given by
\begin{equation}
\label{two-step estimator}
\hat{\xi}_q^{TS} \equiv \hat{\xi}_q^{TS}(\mathbf{Y},\mathbf{N},\mathbf{d}))
= {\xi}_{q|\hat{m}(\mathbf{Y},\mathbf{N},\mathbf{d}))\,
\hat{l}(\mathbf{Y},\mathbf{N},\mathbf{d})}(\mathbf{Y},\mathbf{N},\mathbf{d}).
\end{equation}
In (\ref{two-step estimator}) we have explicitly shown where the data enter
the picture. The difficulty in these two-step estimators is the re-use
(``double-dipping'') of the data $(\mathbf{Y},\mathbf{N},\mathbf{d})$ since we use
them to
select the model class indices $(\hat{m},\hat{l})$, and then we again use them
to estimate the BMD within the chosen model class.
In assessing the properties of such two-step estimators, it is imperative that
the impact of this data double-dipping be taken into account.  Unless corrected for
this additional stochastic element of the estimation process, confidence regions and
significance tests will not possess the desired coverage levels or correct error
rates; see, for instance, \cite{ClaHjo03,DukPen05}.

\section{Model-Averaging Approach to BMD Estimation}
\label{sect-Model Averaging Approach}

Another approach to estimating $\xi_q$ is via a {\em model-averaging} procedure;
see, for instance, \cite{BMA99, BurAnd02, HjoCla03, DukPen05}. The idea here is
to combine
estimates from the different model classes via some form of weighted average, with
the weights constructed to quantify each model's relative likelihood in describing the
data. For our framework, we will specify data-dependent weights
\begin{displaymath}
\hat{\mathbf{w}} \equiv \hat{\mathbf{w}}(\mathbf{Y},\mathbf{N},\mathbf{d}) =
\{\hat{w}_0,(\hat{w}_{ml},l=1,2,\ldots,L_m,m=1,2,\ldots,M)\},
\end{displaymath}
so that the associated model-averaged estimator of $\xi_q$ will be
\begin{eqnarray}
\hat{\xi}_q^{MA}(\mathbf{Y},\mathbf{N},\mathbf{d}) =
\hat{w}_0(\mathbf{Y},\mathbf{N},\mathbf{d}) \hat{\xi}_{q|0}(\mathbf{Y},\mathbf{N},\mathbf{d}) + \nonumber \\  \sum_{m=1}^M \sum_{l=1}^{L_m}
\hat{w}_{ml}(\mathbf{Y},\mathbf{N},\mathbf{d}) \hat{\xi}_{q|ml}(\mathbf{Y},\mathbf{N},\mathbf{d}).
\label{model-average estimator}
\end{eqnarray}
There are several ways to specify the weights. Perhaps the simplest avenue is to impose a Bayesian
structure to the problem, so that the weights become related to the posterior probabilities of
each of the model classes \citep{BaiNobWhe05,MorIbrChe06}.
%
Here we describe the more conventional approach
where the weights arise from the AIC or BIC values.
The AIC-based weights are, for $m=1,2,\ldots,M$ and $l=0,1,2,\ldots,L_m$, computed according to
the so-called Akaike weights \cite[\S~2.9]{BurAnd02} given by
\begin{equation}
\label{AIC weights}
\hat{w}_{ml}^{AIC} = \frac{\exp\{-\frac{1}{2} AIC(\mathcal{M}_{ml})\}}
{\exp\{-\frac{1}{2} AIC(\mathcal{M}_{0})\} + \sum_{m=1}^M \sum_{l=1}^{L_m}
\exp\{-\frac{1}{2} AIC(\mathcal{M}_{ml}\}}.
\end{equation}
Observe that these weights are data-dependent since the AIC-values are derived
from (\ref{model class AIC}). Similarly, the data-dependent BIC weights are specified via
\begin{equation}
\label{BIC weights}
\hat{w}_{ml}^{BIC} = \frac{\exp\{-\frac{1}{2} BIC(\mathcal{M}_{ml})\}}
{\exp\{-\frac{1}{2} BIC(\mathcal{M}_{0})\} + \sum_{m=1}^M \sum_{l=1}^{L_m}
\exp\{-\frac{1}{2} BIC(\mathcal{M}_{ml}\}},
\end{equation}
where the BIC values are computed using (\ref{model class BIC}). In the above formulas,
recall our earlier notation where a subscript of ``$_{m0}$'' coincides with
the subscript `$_0$' so that $\hat{w}_{m0}$ is $\hat{w}_{0}$.

As in the two-step estimator of $\xi_q$, investigating the theoretical properties of these
model-averaged estimators is non-trivial owing to the dependence of both the model-averaging
weights and the estimator of $\xi_q$ in each model class; see, for instance, the evaluation of
the properties of such estimators in specific models in \cite{DukPen05}.
For this quantal-response problem, we will investigate the properties
of these model-averaged estimators via computer simulation studies in
a later section.

\section{A Focused-Inference Approach}
\label{sect-Focused Inference Approach}

This section presents an approach which integrates the model
selection and estimation steps. In contrast to the model class selection
procedures in Section \ref{sect-Model Selection} which choose the model class
without direct regard to the parameter of main interest,
the focused-inference approach
takes into consideration in the model class selection stage
the fact that the BMD $\xi_q$ is the parameter of primary interest. This
strategy was developed in \cite{ClaHjo03,HjoCla03,ClaHjo08}. Since the problem
is of a general nature, we will first present the solution for the
larger problem and then apply it  to benchmark dose estimation.

\subsection{Description of the General Setting}
\label{subsect-General Considerations}

We suppose that for a sample size $n$ we are able to observe the realization
of a random observable $X_n$ taking values in a sample space $\mathcal{X}_n$. We
 denote by $F_n$ the distribution function of $X_n$, and assume that
%
$F_n \in \mathcal{F} \equiv \cup_{m=1}^M \mathcal{F}_{mn},$
%
where $\mathcal{F}_{mn} = \{F_{mn}(\cdot;\theta_m):\ \theta_m \in \Theta_m\}$
with $\Theta_m$ an open subset of $\Re^{g_m}$ for a known positive integer $g_m$. We denote
by $f_{mn}(\cdot;\theta_m)$ the density function of $F_{mn}(\cdot;\theta_m)$ with respect
to some dominating measure $\nu_n$, e.g., Lebesgue or counting measure. Model class
$m \in \{1,2,\ldots,M\}$ will be
%
$\mathcal{M}_m = \{\mathcal{F}_{mn}:\ n=1,2,\ldots\}.$
%

We suppose that the parameter of primary interest is a functional $\tau$
on $\mathcal{F}$ which takes the form
\begin{equation}
\label{parameter of interest}
\tau = \sum_{m=1}^M \tau_m(\theta_m) I\{F_n \in \mathcal{F}_{mn}\},
\end{equation}
where $\tau_m: \Theta_m \rightarrow \Re$. We assume that each
$\tau_m$ possesses `smoothness properties' such as differentiability and continuity
with respect to each $\theta_m$ component.
In this general setting, the primary goal is to estimate
$\tau$ based on $X_n$ and to obtain properties of the estimator.
Of secondary interest is to determine a parsimonious model class
containing $F_n$.
We start by examining asymptotic properties of
estimators of $\tau_m$ under the true model class and also under
a misspecified model class.

\subsection{Properties under True Model Class}

First, let us consider the situation where model class $\mathcal{M}_m$ holds, so
$F_n = F_{mn}(\cdot;\theta_m) \equiv F_{mn}(\theta_m)$ for some $\theta_m \in \Theta_m$.
Thus, in the sequel,
probability statements, including expectations, variances, and covariances,
are taken with respect to $F_{mn}(\cdot;\theta_m)$.
Furthermore, we define the operators
%
$\nabla \equiv \nabla_{\theta} = \frac{\partial}{\partial\theta}$
and
$\nabla^{\otimes 2} \equiv \nabla_{\theta\theta\trp} = \frac{\partial^2}{\partial\theta\partial\theta\trp}.$
%
We let
\begin{eqnarray}
\mathfrak{I}_{mn}(\theta_m) & = & E\left\{
\left(
\nabla_{\theta_m} \log f_{mn}(X_n;\theta_m)
\right)^{\otimes 2} | F_{mn}(\theta_m)
\right\} \nonumber  \\
& = &
-E\left\{
\nabla_{\theta_m\theta_m\trp}
 \log f_{mn}(X_n;\theta_m)  | F_{mn}(\theta_m)
\right\}
\label{Fisher Info for Mm}
\end{eqnarray}
be
the Fisher information matrix for model class $\mathcal{M}_m$.
Then, under suitable regularity conditions and results
from
ML estimation theory \cite[\S~6.3]{LehCas98}, the sequence of
MLEs $\{\hat{\theta}_{mn}, n=1,2,\ldots\}$ based on the sequence
of data $\{X_n, n=1,2,\ldots\}$, under model class $\mathcal{M}_m$,
is consistent for $\theta_m$ and has the asymptotic
distributional
property
\begin{equation}
\label{asumptotic normality of ML}
\mathfrak{L}
\left\{
\sqrt{n}(\hat{\theta}_{mn} - \theta_m) | F_{mn}(\theta_m)
\right\}
\condist
N(0,\mathfrak{I}_{m}^{-1}(\theta_m)),
\end{equation}
where $\mathfrak{L}(\cdot|F_{mn}(\theta_m))$
denotes
the distributional law
under $F_n = F_{mn}(\theta_m)$, and where
\begin{displaymath}
\mathfrak{I}_m(\theta_m) \equiv
\plim_{n\rightarrow\infty} \frac{1}{n} \mathfrak{I}_{mn}(\theta_m),
\end{displaymath}
with `plim' meaning convergence in probability.
By the Delta Method \cite[\S~1.8]{LehCas98}, with
%
$\stackrel{\bullet}{\tau}_m(\theta_m) = \nabla_{\theta_m} \tau_m(\theta_m),$
%
the following propostion follows.

\begin{prop}
\label{prop: Asymptotics under True Model}
As $n \rightarrow \infty$,
%
%
\begin{equation*}
\mathfrak{L}\left\{
\sqrt{n}\left(
\tau_m(\hat{\theta}_{mn}) - \tau_m(\theta_m)
\right) | F_{mn}(\theta_m)
\right\}
\condist
N(0,\stackrel{\bullet}{\tau}_m(\theta_m)\trp
\mathfrak{I}_m^{-1}(\theta_m)
\stackrel{\bullet}{\tau}_m(\theta_m)).
\label{Asymptotic of tau-hat in Mm}
\end{equation*}
\end{prop}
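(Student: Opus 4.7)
The plan is to recognize that the proposition is essentially an immediate consequence of two ingredients already assembled in the excerpt: (i) the asymptotic normality of the MLE under the true model class, stated in equation (\ref{asumptotic normality of ML}), namely
\begin{equation*}
\sqrt{n}(\hat{\theta}_{mn} - \theta_m) \condist N\bigl(0,\mathfrak{I}_m^{-1}(\theta_m)\bigr)
\quad \text{under } F_{mn}(\theta_m),
\end{equation*}
and (ii) the Delta Method, together with the smoothness properties of $\tau_m$ assumed in Section~\ref{subsect-General Considerations}. So the proof is really an invocation, not a fresh derivation; my task is to make sure the hypotheses of the Delta Method are verified.

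First, I would state that by assumption $\tau_m : \Theta_m \rightarrow \Re$ is differentiable at $\theta_m$ with gradient $\stackrel{\bullet}{\tau}_m(\theta_m) = \nabla_{\theta_m}\tau_m(\theta_m)$, and that $\Theta_m$ is an open subset of $\Re^{g_m}$ so the gradient is well-defined in an open neighborhood of the true parameter. Next I would note that the MLE sequence $\hat{\theta}_{mn}$ is consistent for $\theta_m$ (this is part of what yields (\ref{asumptotic normality of ML})), so with probability tending to one $\hat{\theta}_{mn}$ lies in any prescribed neighborhood of $\theta_m$ where the first-order Taylor expansion of $\tau_m$ is valid.

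Then I would perform the one-term Taylor expansion
\begin{equation*}
\tau_m(\hat{\theta}_{mn}) - \tau_m(\theta_m) = \stackrel{\bullet}{\tau}_m(\theta_m)\trp (\hat{\theta}_{mn} - \theta_m) + R_n,
\end{equation*}
where the remainder $R_n$ satisfies $R_n = o_p(\|\hat{\theta}_{mn}-\theta_m\|)$ by differentiability, hence $\sqrt{n}\, R_n = o_p(1)$ since $\sqrt{n}(\hat{\theta}_{mn}-\theta_m) = O_p(1)$ by (\ref{asumptotic normality of ML}). Multiplying through by $\sqrt{n}$ and applying Slutsky's theorem together with the continuous mapping theorem to the linear functional $u \mapsto \stackrel{\bullet}{\tau}_m(\theta_m)\trp u$ gives
\begin{equation*}
\sqrt{n}\bigl(\tau_m(\hat{\theta}_{mn}) - \tau_m(\theta_m)\bigr) \condist \stackrel{\bullet}{\tau}_m(\theta_m)\trp Z,
\end{equation*}
where $Z \sim N(0, \mathfrak{I}_m^{-1}(\theta_m))$. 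Since a linear transformation of a multivariate normal is normal, the limit is $N\bigl(0, \stackrel{\bullet}{\tau}_m(\theta_m)\trp \mathfrak{I}_m^{-1}(\theta_m) \stackrel{\bullet}{\tau}_m(\theta_m)\bigr)$, which is the claimed distribution.

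There is no genuine obstacle here; if anything, the only place needing mild care is the remainder bound in the Taylor expansion, which relies on $\tau_m$ being differentiable (not merely continuous) at $\theta_m$ — this is precisely the smoothness hypothesis invoked in Section~\ref{subsect-General Considerations}. Since the underlying MLE asymptotics have already been granted by citation to standard ML theory, the proof reduces to a clean application of the Delta Method in the displayed form.
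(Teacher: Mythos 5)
Your proposal is correct and follows exactly the route the paper takes: the paper proves this proposition simply by citing the Delta Method applied to the asymptotic normality of the MLE in the true model class, which is precisely your argument, only with the Taylor-expansion and Slutsky details written out explicitly. No substantive difference.
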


\subsection{Properties under a Misspecified Model Class}

Next, we examine the properties of $\hat{\theta}_{mn}$ when the true model
class is $\mathcal{M}_{m^\prime}$. This will enable us to obtain the properties
of $\tau_m(\hat{\theta}_{mn})$ under the model class $\mathcal{M}_{m^\prime}$.
Define the Kullback-Leibler divergence between $f_{mn}(\cdot;\theta_m)$ and
$f_{m^\prime n}(\cdot;\theta_{m^\prime})$, under $F_{m^\prime n}(\theta_{m^\prime})$,
according to
\begin{equation}
\label{KL distance}
K_{m,m^\prime}^{(n)}(\theta_m,\theta_{m^\prime}) =
\int \log
\left\{
\frac{f_{mn}(x_n;\theta_m)}{f_{m^\prime n}(x_n;\theta_{m^\prime})}
\right\}
f_{m^\prime n}(x_n;\theta_{m^\prime}) \nu_n(dx_n).
\end{equation}
We also assume that there is a function $K_{m,m^\prime}:
\Theta_m \times \Theta_{m^\prime} \rightarrow \Re$ such that
\begin{equation}
\label{zero property of score}
K_{m,m^\prime}(\theta_m,\theta_{m^\prime}) =
\lim_{n \rightarrow \infty}
\frac{1}{n} K_{m,m^\prime}^{(n)}(\theta_m,\theta_{m^\prime}).
\end{equation}
Define
\begin{equation}
\label{projection}
\theta_{m,m^\prime}^{*(n)}(\theta_{m^\prime}) =
\arg\max_{\theta_m \in \Theta_m} K_{m,m^\prime}^{(n)}(\theta_m,\theta_{m^\prime}),
\end{equation}
so $f_{mn}(\cdot;\theta_{m,m^\prime}^{*(n)}(\theta_{m^\prime}))$ is the
closest element of $\mathcal{F}_{mn}$ to the true density
$f_{m^\prime n}(\cdot;\theta_{m^\prime})$ according to Kullback-Leibler
divergence, also referred to as the {\em quasi} true model in the assumed model
class $\mathcal{F}_{mn}$. We assume that
\begin{displaymath}
\lim_{n \rightarrow \infty} \theta_{m,m^\prime}^{*(n)}(\theta_{m^\prime}) =
\theta_{m,m^\prime}^*(\theta_{m^\prime})
\end{displaymath}
where
\begin{equation}
\label{limit projection}
\theta_{m,m^\prime}^*(\theta_{m^\prime}) =
\arg\max_{\theta_m \in \Theta_m} K_{m,m^\prime}(\theta_m,\theta_{m^\prime}).
\end{equation}
By Jensen's Inequality \cite[\S~1.7]{LehCas98}
note that we have
$K_{m,m^\prime}^{(n)}(\theta_m,\theta_{m^\prime}) \le 0$
with equality iff $m = m^\prime$. In particular,
%
$K_{m,m^\prime}^{(n)}(\theta_{m,m^\prime}^{*(n)}(\theta_{m^\prime}),\theta_{m^\prime}) \le 0$,
%
with equality iff $m = m^\prime$, in which case $\theta_{m,m}^{*(n)}(\theta_m) = \theta_m$.
Furthermore, under suitable regularity conditions, note that
$\theta_{m,m^\prime}^{*(n)}(\theta_{m^\prime})$ solves the equation
\begin{equation}
\label{zero property}
\int \left\{
\nabla_{\theta_m} \log f_{mn}(x_n;\theta_m)
\right\}
f_{m^\prime n}(x_n;\theta_{m^\prime}) \nu_n(dx_n) = 0.
\end{equation}
Let
%
$$U_{mn}(\theta_m;x_n) = \nabla_{\theta_m} \log f_{mn}(x_n;\theta_m)$$
%
be the score function of $\theta_m$, under $F_{mn}$, given data $x_n$. Then,
from (\ref{zero property}), we have that at
$\theta_m = \theta_{mm^\prime}^{*(n)}(\theta_{m^\prime})$,
\begin{displaymath}
E\left\{U_{mn}(\theta_m;X_n)|F_{m^\prime n}(\theta_{m^\prime})\right\} =
\int U_{mn}(\theta_m;x_n) f_{m^\prime n}(x_n;\theta_{m^\prime}) \nu_n(dx_n) = 0.
\end{displaymath}
Let
%
$$A_{mn}(\theta_m;x_n) =
- \nabla_{\theta_m \theta_m\trp} \log f_{mn}(x_n;\theta_m)$$
%
be the observed information matrix function under model $F_{mn}$ given data $x_n$.
We assume that, under model class $\mathcal{M}_{m^\prime}$, there exists
a vector function $U_{m,m^\prime}: \Theta_m \times \Theta_{m^\prime} \rightarrow \Re^g$ and
a matrix function $A_{m,m^\prime}: \Theta_m \times \Theta_{m^\prime} \rightarrow
(\Re^g)^{\otimes 2}$ such that, under model class $\mathcal{M}_{m^\prime}$, we have
\begin{eqnarray*}
\frac{1}{n} U_{mn}(\theta_m;X_n)  \conuprob  U_{m,m^\prime}(\theta_m,\theta_{m^\prime}) \quad \mbox{and} \quad
\frac{1}{n} A_{mn}(\theta_m;X_n)  \conuprob  A_{m,m^\prime}(\theta_m,\theta_{m^\prime}),
\end{eqnarray*}
where `$\conuprob$' means uniform convergence in probability.
The required uniform convergence in probability
is only needed in a neighborhood of $\theta_{mm^\prime}^{*}(\theta_{m^\prime})$.
Note that
%
$A_{m,m}(\theta_m,\theta_m) = \mathfrak{I}_{m}(\theta_m).$
%

We will now obtain the asymptotic distribution of the sequence of estimators
$\{\hat{\theta}_{mn}(X_n):\ n=1,2,\ldots\}$ when the true model class is
$\mathcal{M}_{m^\prime}$. By the defining property of $\hat{\theta}_{mn}$, we have
%
$\frac{1}{n} U_{mn}(\hat{\theta}_{mn}; X_n) = 0.$
%
Expanding this at $\theta_{m,m^\prime}^{*(n)}(\theta_{m^\prime})$, we
achieve
\begin{eqnarray*}
0 & = & \frac{1}{\sqrt{n}} U_{mn}(\hat{\theta}_{mn};X_n) \\
& = & \frac{1}{\sqrt{n}} U_{mn}\left(\theta_{m,m^\prime}^{*(n)}(\theta_{m^\prime}); X_n\right)
 - \frac{1}{\sqrt{n}} A_{mn}(\tilde{\theta}_n;X_n)
\left(\hat{\theta}_{mn} - \theta_{m,m^\prime}^{*(n)}(\theta_{m^\prime})\right),
\end{eqnarray*}
where $\tilde{\theta}_n \in [\hat{\theta}_{mn}, \theta_{m,m^\prime}^{*(n)}(\theta_{m^\prime})]$.
Consequently,
\begin{eqnarray}
\sqrt{n} \left(\hat{\theta}_{mn} -
\theta_{m,m^\prime}^{*(n)}(\theta_{m^\prime})\right) = 
\left[
\frac{1}{n} A_{mn}(\tilde{\theta}_n;X_n)
\right]^{-1}
\frac{1}{\sqrt{n}}  U_{mn}\left(\theta_{m,m^\prime}^{*(n)}(\theta_{m^\prime}); X_n\right).
\label{thetahat representation}\end{eqnarray}
Under
suitable
regularity conditions it can be shown that, under model class $\mathcal{M}_{m^\prime}$,
$\hat{\theta}_{mn} \conprob \theta_{m,m^\prime}^*(\theta_{m^\prime})$. This implies that
$\tilde{\theta}_n \conprob \theta_{m,m^\prime}^*(\theta_{m^\prime})$. As a consequence,
we have that
\begin{displaymath}
\frac{1}{n} A_{mn}(\tilde{\theta}_n; X_n) \conprob
A_{m,m^\prime}(\theta_{m,m^\prime}^*(\theta_{m^\prime}),\theta_{m^\prime}).
\end{displaymath}
On the other hand, under model class $\mathcal{M}_{m^\prime}$, we assume that
\begin{equation}
\label{asymptotic normality of score}
\frac{1}{\sqrt{n}}
U_{mn}\left(\theta_{m,m^\prime}^{*(n)}(\theta_{m^\prime}),X_n\right)
\condist
N(0,\Sigma_{m,m^\prime}(\theta_{m,m^\prime}^*(\theta_{m^\prime}),\theta_{m^\prime})),
\end{equation}
where, with
%
$\Sigma_{m,m^\prime}^{(n)}(\theta_m,\theta_{m^\prime}) = Cov\left\{
U_{mn}(X_n;\theta_m)
| F_{m^\prime n}(\theta_{m^\prime})
\right\},$
%
we have
\begin{displaymath}
\Sigma_{m,m^\prime}(\theta_{m,m^\prime}^*(\theta_{m^\prime}),\theta_{m^\prime})) =
\lim_{n\rightarrow\infty}
\frac{1}{n} \Sigma_{m,m^\prime}^{(n)}(\theta_{m,m^\prime}^{*(n)}(\theta_{m^\prime}),\theta_{m^\prime}).
\end{displaymath}
Note that, for $m = 1,2,\ldots,M$, we find
%
$\Sigma_{m,m}(\theta_{m,m}^*(\theta_m),\theta_m) =
\mathfrak{I}_m(\theta_m).$
%
As a consequence, we have the following proposition:

\begin{prop}
\label{prop: Asymptotics under Misspec Model}
As $n \rightarrow \infty$,
%
$$\mathfrak{L}
\left\{
\sqrt{n}
\left(
\hat{\theta}_{mn} - \theta_{m,m^\prime}^{*(n)}(\theta_{m^\prime})
\right) | \mathcal{M}_{m^\prime}
\right\}
\condist
N(0,\Xi_{m,m^\prime}(\theta_{m^\prime})),$$
%
where
\begin{eqnarray*}
\lefteqn{ 
\Xi_{m,m^\prime}(\theta_{m^\prime}) =
A_{m,m^\prime}^{-1}\left(\theta_{m,m^\prime}^*(\theta_{m^\prime}),\theta_{m^\prime}\right)
} \\ &&
\Sigma_{m,m^\prime}(\theta_{m,m^\prime}^*(\theta_{m^\prime}),\theta_{m^\prime}))
A_{m,m^\prime}^{-1}\left(\theta_{m,m^\prime}^*(\theta_{m^\prime}),\theta_{m^\prime}\right).
\end{eqnarray*}
\end{prop}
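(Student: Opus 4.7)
The proof will essentially formalize the sketch that has already been carried out in the lead-up to the proposition: it is a Slutsky-type argument applied to the first-order Taylor expansion of the score equation about the quasi-true value $\theta_{m,m^\prime}^{*(n)}(\theta_{m^\prime})$. The plan is to start from the representation
\begin{displaymath}
\sqrt{n}\left(\hat{\theta}_{mn}-\theta_{m,m^\prime}^{*(n)}(\theta_{m^\prime})\right)
= \left[\tfrac{1}{n}A_{mn}(\tilde{\theta}_n;X_n)\right]^{-1}\tfrac{1}{\sqrt{n}}U_{mn}\left(\theta_{m,m^\prime}^{*(n)}(\theta_{m^\prime});X_n\right),
\end{displaymath}
derived from a one-term Taylor expansion of $U_{mn}(\hat{\theta}_{mn};X_n)=0$ around $\theta_{m,m^\prime}^{*(n)}(\theta_{m^\prime})$, and then identify the limit of each factor under $F_{m^\prime n}(\theta_{m^\prime})$.

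First I would establish consistency, $\hat{\theta}_{mn}\conprob \theta_{m,m^\prime}^*(\theta_{m^\prime})$, under the misspecified law. The standard route is to observe that $\theta\mapsto\tfrac{1}{n}\log f_{mn}(X_n;\theta)$ converges uniformly (on a neighborhood of $\theta_{m,m^\prime}^*(\theta_{m^\prime})$) to $\theta\mapsto K_{m,m^\prime}(\theta,\theta_{m^\prime})$ up to an additive term not depending on $\theta$, so the argmax of the random map converges to the argmax of the limit, which is $\theta_{m,m^\prime}^*(\theta_{m^\prime})$ by (\ref{limit projection}). The intermediate point $\tilde{\theta}_n$ satisfies $\|\tilde{\theta}_n-\theta_{m,m^\prime}^{*(n)}(\theta_{m^\prime})\|\le\|\hat{\theta}_{mn}-\theta_{m,m^\prime}^{*(n)}(\theta_{m^\prime})\|\conprob 0$, so by the assumed uniform convergence of $\tfrac{1}{n}A_{mn}(\cdot;X_n)$ in a neighborhood of the quasi-true value together with continuity of $A_{m,m^\prime}$, we get
\begin{displaymath}
\tfrac{1}{n}A_{mn}(\tilde{\theta}_n;X_n)\conprob A_{m,m^\prime}\!\left(\theta_{m,m^\prime}^*(\theta_{m^\prime}),\theta_{m^\prime}\right).
\end{displaymath}

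Next I would invoke assumption (\ref{asymptotic normality of score}), which gives
\begin{displaymath}
\tfrac{1}{\sqrt{n}}U_{mn}\!\left(\theta_{m,m^\prime}^{*(n)}(\theta_{m^\prime});X_n\right)\condist N\!\left(0,\Sigma_{m,m^\prime}\!\left(\theta_{m,m^\prime}^*(\theta_{m^\prime}),\theta_{m^\prime}\right)\right),
\end{displaymath}
the centering at zero being legitimate thanks to the score-identity (\ref{zero property}) evaluated at the quasi-true value. Combining these two limits via Slutsky's theorem and the continuous mapping theorem (inversion of $A_{m,m^\prime}$, assumed nonsingular), the product has the asymptotic normal law with mean zero and sandwich covariance
\begin{displaymath}
A_{m,m^\prime}^{-1}\,\Sigma_{m,m^\prime}\,A_{m,m^\prime}^{-1}\ =\ \Xi_{m,m^\prime}(\theta_{m^\prime}),
\end{displaymath}
which is exactly the claim.

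The main obstacle is not the Slutsky step but the technical underpinnings used silently above: (i) consistency of $\hat{\theta}_{mn}$ under misspecification, which requires the uniform-in-$\theta$ behaviour of the normalized log-likelihood and identifiability of $\theta_{m,m^\prime}^*(\theta_{m^\prime})$ as the unique maximizer of $K_{m,m^\prime}(\cdot,\theta_{m^\prime})$; and (ii) the validity of the second-order Taylor remainder bound, which in turn needs the uniform convergence of $\tfrac{1}{n}A_{mn}$ on a neighborhood containing $\tilde{\theta}_n$ with high probability. These are precisely the ``suitable regularity conditions'' the paper alludes to, and in a complete proof they would be spelled out as differentiability of $f_{mn}$, domination conditions for the interchange of limits and derivatives in (\ref{zero property}), and nonsingularity of $A_{m,m^\prime}$ at the quasi-true value.
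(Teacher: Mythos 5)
Your argument is correct and follows essentially the same route as the paper, which derives the proposition inline by Taylor-expanding the score equation at the quasi-true value to obtain the representation in (\ref{thetahat representation}), sending $\tfrac{1}{n}A_{mn}(\tilde{\theta}_n;X_n)$ to $A_{m,m^\prime}(\theta_{m,m^\prime}^*(\theta_{m^\prime}),\theta_{m^\prime})$, invoking the assumed asymptotic normality of the normalized score in (\ref{asymptotic normality of score}), and concluding by Slutsky. Your additional remarks on consistency under misspecification and the uniform-convergence requirements simply make explicit the ``suitable regularity conditions'' the paper leaves implicit.
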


Finally, by applying the Delta Method, we obtain the following result concerning
the asymptotic properties of ${\tau}(\hat{\theta}_{mn})$ under a misspecified model.

\begin{theo}
\label{theo: Asymptotics under Misspec Model}
As $n \rightarrow \infty$,
\begin{equation*}
\label{asymptotic of tauhat}
\mathfrak{L}\left\{
\sqrt{n}
\left(
\tau_m(\hat{\theta}_{mn}) - \tau_m(\theta_{m,m^\prime}^{*(n)}(\theta_{m^\prime}))
\right) | \mathcal{M}_{m^\prime}
\right\}
\condist
N\left(0,\Gamma_{m,m^\prime}(\theta_{m^\prime})\right),
\end{equation*}
where
%
$\Gamma_{m,m^\prime}(\theta_{m^\prime}) =\
\stackrel{\bullet}{\tau}_m(\theta_{m,m^\prime}^*(\theta_{m^\prime}))\trp
\Xi_{m,m^\prime}(\theta_{m^\prime})
\stackrel{\bullet}{\tau}_m(\theta_{m,m^\prime}^*(\theta_{m^\prime})).$
%
\end{theo}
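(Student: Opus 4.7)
The plan is a direct Delta Method argument, treating the result as an application of Proposition \ref{prop: Asymptotics under Misspec Model} through the smooth map $\tau_m$. The setup already guarantees that $\tau_m$ is differentiable with gradient $\stackrel{\bullet}{\tau}_m$ (this was assumed when $\tau_m$ was first introduced), and Proposition \ref{prop: Asymptotics under Misspec Model} supplies the needed $\sqrt{n}$-asymptotic normality of $\hat{\theta}_{mn}$ centered at the sieve projection $\theta_{m,m^\prime}^{*(n)}(\theta_{m^\prime})$. What remains is to transfer this limit theorem from the parameter scale to the scale of $\tau_m$.

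First, I would perform a one-term Taylor expansion of $\tau_m$ about $\theta_{m,m^\prime}^{*(n)}(\theta_{m^\prime})$ along the chord to $\hat{\theta}_{mn}$, writing
\begin{displaymath}
\sqrt{n}\left(\tau_m(\hat{\theta}_{mn}) - \tau_m(\theta_{m,m^\prime}^{*(n)}(\theta_{m^\prime}))\right) = \stackrel{\bullet}{\tau}_m(\tilde{\theta}_n)\trp \sqrt{n}\left(\hat{\theta}_{mn} - \theta_{m,m^\prime}^{*(n)}(\theta_{m^\prime})\right),
\end{displaymath}
where $\tilde{\theta}_n$ lies on the segment between $\hat{\theta}_{mn}$ and $\theta_{m,m^\prime}^{*(n)}(\theta_{m^\prime})$. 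The consistency already noted in the excerpt, $\hat{\theta}_{mn} \conprob \theta_{m,m^\prime}^*(\theta_{m^\prime})$, together with the hypothesized convergence of $\theta_{m,m^\prime}^{*(n)}(\theta_{m^\prime})$ to the same limit, forces $\tilde{\theta}_n \conprob \theta_{m,m^\prime}^*(\theta_{m^\prime})$ as well.

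Second, by continuity of $\stackrel{\bullet}{\tau}_m$ (a consequence of the assumed smoothness of $\tau_m$) and the Continuous Mapping Theorem, $\stackrel{\bullet}{\tau}_m(\tilde{\theta}_n) \conprob \stackrel{\bullet}{\tau}_m(\theta_{m,m^\prime}^*(\theta_{m^\prime}))$. Proposition \ref{prop: Asymptotics under Misspec Model} gives
\begin{displaymath}
\sqrt{n}\left(\hat{\theta}_{mn} - \theta_{m,m^\prime}^{*(n)}(\theta_{m^\prime})\right) \condist N\!\left(0,\Xi_{m,m^\prime}(\theta_{m^\prime})\right)
\end{displaymath}
under $\mathcal{M}_{m^\prime}$, so an application of Slutsky's Theorem to the product displayed above yields the asymptotic normality of $\sqrt{n}(\tau_m(\hat{\theta}_{mn}) - \tau_m(\theta_{m,m^\prime}^{*(n)}(\theta_{m^\prime})))$ with mean zero and sandwich variance
\begin{displaymath}
\stackrel{\bullet}{\tau}_m(\theta_{m,m^\prime}^*(\theta_{m^\prime}))\trp \Xi_{m,m^\prime}(\theta_{m^\prime}) \stackrel{\bullet}{\tau}_m(\theta_{m,m^\prime}^*(\theta_{m^\prime})),
\end{displaymath}
which is precisely $\Gamma_{m,m^\prime}(\theta_{m^\prime})$.

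The only subtle point -- and the place I would be most careful -- is that the centering is the sequence $\theta_{m,m^\prime}^{*(n)}(\theta_{m^\prime})$ rather than its limit $\theta_{m,m^\prime}^*(\theta_{m^\prime})$. This matters because one cannot simply invoke the textbook Delta Method statement (which centers at a fixed point); the Taylor-plus-Slutsky argument outlined above is needed precisely to handle the drifting centering. Provided the smoothness assumption on $\tau_m$ is strong enough to give continuity of $\stackrel{\bullet}{\tau}_m$ in a neighborhood of $\theta_{m,m^\prime}^*(\theta_{m^\prime})$ -- which matches the ``smoothness properties'' noted earlier -- the proof is complete. No further regularity conditions beyond those already invoked for Proposition \ref{prop: Asymptotics under Misspec Model} appear to be required.
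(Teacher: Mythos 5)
Your proposal is correct and follows essentially the same route as the paper, which simply invokes the Delta Method applied to Proposition \ref{prop: Asymptotics under Misspec Model} without further elaboration. Your added care in handling the drifting centering sequence $\theta_{m,m^\prime}^{*(n)}(\theta_{m^\prime})$ via the Taylor-plus-Slutsky argument is a sensible filling-in of detail the paper leaves implicit, not a departure from its approach.
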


We remark that the result in Proposition \ref{prop: Asymptotics under True Model}
can be recovered from Theorem \ref{theo: Asymptotics under Misspec Model} by noting
that for each $m = 1,\ldots,M$, we have $\theta_{m,m}^*(\theta_m) = \theta_m$ and
%
$\Gamma_{m,m}(\theta_m) = \, \stackrel{\bullet}{\tau}_m(\theta_m)\trp \,
\mathfrak{I}_m(\theta_m)^{-1} \, \stackrel{\bullet}{\tau}_m(\theta_m).$
%
In addition, we also point out that the true model need not actually be a parametric
model. The derivations above, with a slight change in notation, also hold if
the true model is simply represented by $\mathcal{M}^{\prime(n)}$ with the governing
distribution of $F^{\prime(n)}(\cdot)$. In such a case, for the Kullback-Leibler
divergence, we
use the mapping
\begin{displaymath}
\theta_m \mapsto \int \log\{f_{mn}(x_n;\theta_m)\} F^{\prime(n)}(dx_n).
\end{displaymath}

\subsection{Rationale for the Focused-Inference Approach}

Consider now a sequence of estimators
for $\tau$, say $\{\hat{\tau}_n: \ n=1,2,3,\ldots\}$.
How should we evaluate this sequence of estimators?
Clearly, the evaluation will depend on the true value
of $\tau$, which in turn depends on the model class that holds. A reasonable measure
of the quality of this sequence of estimators would be
\begin{equation}
\label{general risk of tau estimators}
R_{m^\prime}(\hat{\tau}_n,\theta_{m^\prime}) =
n E\left\{\left(\hat{\tau}_n - \tau_{m^\prime}(\theta_{m^\prime})\right)^2 |
\mathcal{M}_{m^\prime}\right\}, n=1,2,3,\ldots.
\end{equation}
This represents the (re-scaled) risk function, associated with squared-error loss, under model class
$\mathcal{M}_{m^\prime}$.
[Note that by `risk function' here we mean expected loss, as in the usual
decision-theoretic paradigm.  This should not be confused with the extra risk
function in \eqref{extra risk} used to define the BMD.]
To simplify our notation, let
%
$\hat{\tau}_{mn} \equiv \tau_m(\hat{\theta}_{mn}),\ n=1,2,3,\ldots.$
%
Then, let
%
$R_{m,m^\prime}^{(n)}(\theta_{m^\prime}) \equiv
R_{m^\prime}(\hat{\tau}_{mn},\theta_{m^\prime}).$
%
By using the identity
\begin{eqnarray*}
\lefteqn{ 
n\left(
\tau_m(\hat{\theta}_{mn}) - \tau_{m^\prime}(\theta_{m^\prime})
\right)^2   = } \\ &&
n\left[
\left(\tau_m(\hat{\theta}_{mn}) - \tau_m(\theta_{m,m^\prime}^*(\theta_{m^\prime}))\right) +
\left(\tau_m(\theta_{m,m^\prime}^*(\theta_{m^\prime})) - \tau_{m^\prime}(\theta_{m^\prime})\right)
\right]^2,
\end{eqnarray*}
we obtain from the earlier asymptotic results that, for large $n$,
\begin{equation}
\label{approximate risks}
R_{m,m^\prime}^{(n)}(\theta_{m^\prime}) \approx
\Gamma_{m,m^\prime}(\theta_{m^\prime}) +
n \left[
\tau_m(\theta_{m,m^\prime}^*(\theta_{m^\prime})) - \tau_{m^\prime}(\theta_{m^\prime})
\right]^2,
\end{equation}
a variance-bias decomposition. Note that to obtain (\ref{approximate risks}), we also used
the result that
\begin{displaymath}
E\{\sqrt{n}[\hat{\tau}_n - \tau_m(\theta_{m,m^\prime}^{*}(\theta_{m^\prime})]|\mathcal{M}_{m^\prime}\} = o(1).
\end{displaymath}
Furthermore, observe that, for large $n$,
\begin{displaymath}
R_{m,m}^{(n)}(\theta_m) \approx \Gamma_{m,m}(\theta_m) =
\, \stackrel{\bullet}{\tau}_m(\theta_m)\trp \,
\mathfrak{I}_m(\theta_m)^{-1} \, \stackrel{\bullet}{\tau}_m(\theta_m).
\end{displaymath}
We now describe possible approaches to utilizing the above risks for model selection and
BMD estimation.

\subsection{An Empirical-Based Approach}
\label{subsec: empirical-based approach}
Presumably there is a sequence of true models $\{\mathcal{M}^\prime_n\}$ governing the
data sequence $\{X_n\}$. We do not know this sequence of true models, and it need not
coincide with the possible model classes under consideration, so we will not know
the quantities $\theta_{\mathcal{M}^\prime}$ and $\theta^*_{m,\mathcal{M}^\prime}$. As such
we will not know the risks $R_{m,{\mathcal{M}^\prime}}^{(n)}$.

A possible approach is to use a nonparametric estimate of $F_n^\prime$, say
$\hat{F}_n^\prime$, and to use this estimate to obtain estimates of both
$\tau_{\mathcal{M}^\prime}$ and $\theta_{m,\mathcal{M}^\prime}^{*}$. Let such
estimates be $\hat{\tau}_{\mathcal{M}^\prime}^{(n)}$ and $\hat{\theta}_{m,\mathcal{M}^\prime}^{*(n)}$,
the latter being the KL projection of $\hat{F}_n^\prime$ on $\Theta_m$. For each
$m$ we may then estimate $R_{m,\mathcal{M}^\prime}$ by
\begin{displaymath}
\hat{R}_{m,\mathcal{M}^\prime}^{EMP,(n)} =
\Gamma_{m,\mathcal{M}^\prime}(\hat{\theta}_{m,\mathcal{M}^\prime}^{*(n)}) +
n\left[\tau_m(\hat{\theta}_{m,\mathcal{M}^\prime}^{*(n)}) -
\hat{\tau}_{\mathcal{M}^\prime}^{(n)}\right]^2.
\end{displaymath}

On the basis of these empirically estimated risks, a possible model selector is
\begin{equation}
\label{empirical-based model selector}
\hat{m}^{EMP} = \arg\min_{m} \hat{R}_{m,\mathcal{M}^\prime}^{EMP},
\end{equation}
and the associated BMD estimator is
\begin{equation}
\label{BMD empirical-based estimator}
\hat{\tau}_n^{EMP} = \tau_{\hat{m}^{EMP}}(\hat{\theta}_{\hat{m}^{EMP},n}).
\end{equation}

\subsection{A Model-Based Approach}
\label{subsec: model-based approach}
Another approach to estimating the risks $R_{m,m^\prime}^{(n)}(\theta_{m^\prime})$ is by
substituting for $\theta_{m^\prime}$ the estimator $\hat{\theta}_{m^\prime n}$.
However, because we could not really be certain that model $\mathcal{M}^\prime$ is the
true model, in estimating the bias term we replace $\tau_{\mathcal{M}^\prime}$ by
an empirical estimator such as the one utilized in the preceding subsection. Observe that
if we also estimate $\tau_{m^\prime}(\theta_{m^\prime})$ by $\tau_{m^\prime}(\hat{\theta}_{m^\prime})$,
then the estimated bias term will always become zero whenever $m = m^\prime$. This would be fine if
the actual underlying model truly belongs to the models under consideration, but this could be
misleading if the true model class is not among the considered models. Hence, the rationale for
the use of a nonparametric estimate of $\tau_{m^\prime}(\theta_{m^\prime})$ in estimating
the bias term.

Now, denote the resulting estimator of $R_{m,m^\prime}(\theta_{m^\prime})$ by
$\hat{R}_{m,m^\prime}^{(n)}$ for $n = 1,2,3,\ldots$ and
$m,m^\prime \in \{1,2,\ldots,M\}$. We may picture these risk estimates as in Table
\ref{table-risk estimates}.

\begin{table}[h]
\caption{Decision-theoretic risk estimates of different model class-based estimators under the possible
model classes.}
\label{table-risk estimates}
\begin{center}
\begin{tabular}{||c||c|c|c|c||} \hline
Estimator Based On & \multicolumn{4}{c||}{True Underlying Model Class} \\
Model Class & \multicolumn{4}{c||}{$m^\prime$} \\ \cline{2-5}
$m$ & $1$ & $2$ & $\cdots$ & $M$ \\ \hline
1 & $\hat{R}_{1,1}^{(n)}$ & $\hat{R}_{1,2}^{(n)}$ & $\cdots$ & $\hat{R}_{1,M}^{(n)}$ \\
2 & $\hat{R}_{2,1}^{(n)}$ & $\hat{R}_{2,2}^{(n)}$ & $\cdots$ & $\hat{R}_{2,M}^{(n)}$ \\
$\vdots$ & $\vdots$ & $\vdots$ & $\vdots$ & $\vdots$ \\
$M$ & $\hat{R}_{M,1}^{(n)}$ & $\hat{R}_{M,2}^{(n)}$ & $\cdots$ & $\hat{R}_{M,M}^{(n)}$ \\
\hline
\end{tabular}
\end{center}
\end{table}

Suppose for the moment that our goal
is to select the model class that holds
as informed by the parametric function $\tau$. For each possible model class
$\mathcal{M}^{\prime}$, we may determine the estimator yielding the smallest risk.
Having done so, we may then determine the model class that yields the smallest
among these lowest risks. As such, a possible model class index selector,
focused towards the estimation of $\tau$, is
\begin{equation}
\label{focused model class selector}
\hat{m}^{FM} =
{\arg\min}_{m^\prime = 1,2,\ldots,M}
\left\{
\min_{m = 1,2,\ldots,M} \hat{R}_{m,m^\prime}^{(n)}
\right\}.
\end{equation}
This could be referred to as a {\em $\tau$-focused model class selector}. If the primary
goal is to select the model class, then it will be $\mathcal{M}_{\hat{m}^{FM}}$.
An associated BMD estimator will then be
\begin{equation}
\label{BMD estimator tau focused}
\hat{\tau}^{FM} = \tau_{\hat{m}^{FM}}(\hat{\theta}_{\hat{m}^{FM}}).
\end{equation}

As noted earlier, however, the model class selection problem is not of
primary interest. Rather, we seek to estimate the parametric
functional $\tau$.
The viewpoint utilized
in the development of the model class selector (\ref{focused model class selector})
may not therefore be the most appropriate in terms of choosing the estimator
of $\tau$.

Instead, we argue as follows. Given the $\tau$-estimator based on
model class $\mathcal{M}_m$, we may ask which model class $\mathcal{M}_{m^\prime}$ leads
to the smallest risk. Having done so, we then ask which among the $M$ model class-based estimators
yields the smallest among these lowest risks. This motivates the model class index selector
\begin{equation}
\label{general focused estimator selector}
\hat{m}^{FE} =
{\arg\min}_{m = 1,2,\ldots,M}
\left\{
\min_{m^\prime = 1,2,\ldots,M} \hat{R}_{m,m^\prime}^{(n)}
\right\}.
\end{equation}
The resulting {\em focused estimator of $\tau$} is
\begin{equation}
\label{focused tau estimator}
\hat{\tau}^{FE} = \tau_{\hat{m}^{FE}}(\hat{\theta}_{\hat{m}^{FE}}).
\end{equation}
Note that $\hat{m}^{EMP}$, $\hat{m}^{FM}$, $\hat{m}^{FE}$, and $\hat{\theta}^{m}$
are all functions of the data $X_n$, hence the estimators of $\tau$ given by
$\hat{\tau}^{EMP}$, $\hat{\tau}^{FM}$, and $\hat{\tau}^{FE}$ all possess a two-step flavor
instead of a model-averaged flavor.

\section{Application to Quantal-Response Problem}
\label{sect - Quantal response problem}

We now apply the theory presented in Section \ref{sect-Focused Inference Approach}
to the quantal-response problem, where the random observable $X_n$ is
$(\mathbf{Y},\mathbf{N},\mathbf{d}) \equiv
(\mathbf{Y}^{(n)},\mathbf{N}^{(n)},\mathbf{d}^{(n)})$
and the model classes are indexed by $(m,l)$, where $l \in \{0,1,2,\ldots,L_m\}$ and $m \in \{1,2,
\ldots,M\}$. The parameter of interest that coincides with
$\tau$ in the preceding
section is the BMD $\xi_q$, defined by
\begin{displaymath}
\xi_q = \tau_{ml}(\theta_0,\theta_{m1},\theta_{m2},\ldots,\theta_{ml};q) =
\pi_{ml;E}^{-1}(q;\theta_0,\theta_{m1},\theta_{m2},\ldots,\theta_{ml})
\end{displaymath}
on model class $\mathcal{M}_{ml}$.
 Note that $\pi_{ml;E}(\cdot;\theta_{ml})$ is the extra risk
function associated with the dose-response function
$\pi_{ml}(\cdot;\theta_{ml})$. For this $\tau$-function we have
an explicit form of its gradient as provided in the following proposition.

\begin{prop}
\label{prop: gradient of tau}
The gradient of the $\tau$ function for a BMR at $q$ and a dose-response
function $\pi(\cdot;\theta)$ is
\begin{displaymath}
\stackrel{\bullet}{\tau}(\theta) \equiv \stackrel{\bullet}{\tau}(\theta;q) =
\frac{(1-q)\pi_{01}(0,\theta) - \pi_{01}(\tau(\theta);\theta)}
{\pi_{10}(\tau(\theta);\theta)},
\end{displaymath}
where $\pi_{10}(d;\theta) = {\frac{\partial}{\partial d}}\pi(d;\theta)$ and
$\pi_{01}(d;\theta) = \nabla_{\theta} \pi(d;\theta)$.
\end{prop}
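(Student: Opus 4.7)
The plan is a direct application of implicit differentiation to the defining equation of the BMD. Since $\tau(\theta) \equiv \xi_q$ is characterized by $\pi_E(\tau(\theta);\theta) = q$, I would first rewrite this defining identity in a form that removes the quotient, then differentiate with respect to $\theta$.

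More concretely, from the definition of the extra risk function $\pi_E(d;\theta) = [\pi(d;\theta) - \pi(0;\theta)]/[1 - \pi(0;\theta)]$, the equation $\pi_E(\tau(\theta);\theta) = q$ is equivalent to
\begin{equation*}
\pi(\tau(\theta);\theta) \;=\; q + (1-q)\,\pi(0;\theta),
\end{equation*}
which is smoother to differentiate because it avoids the quotient. Next, I would apply $\nabla_\theta$ to both sides. On the left, $\tau(\theta)$ enters through both the dose argument and the parameter, so the chain rule produces
\begin{equation*}
\pi_{10}(\tau(\theta);\theta)\,\stackrel{\bullet}{\tau}(\theta) \;+\; \pi_{01}(\tau(\theta);\theta),
\end{equation*}
using the notation of the proposition for the partial derivatives with respect to $d$ and $\theta$ respectively. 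On the right, only the background term $\pi(0;\theta)$ depends on $\theta$, yielding $(1-q)\,\pi_{01}(0;\theta)$.

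Equating these two expressions and solving for $\stackrel{\bullet}{\tau}(\theta)$ gives exactly the stated formula, provided the denominator $\pi_{10}(\tau(\theta);\theta)$ is nonzero, which is guaranteed by the standing monotonicity assumption $d \mapsto \pi(d;\theta)$ increasing (so $\pi_{10} > 0$ in a neighborhood of any interior BMD). The only non-routine step is justifying that $\theta \mapsto \tau(\theta)$ is itself differentiable, but this follows from the implicit function theorem applied to $F(d,\theta) := \pi(d;\theta) - q - (1-q)\pi(0;\theta)$ at $d = \tau(\theta)$, using $\partial F/\partial d = \pi_{10}(\tau(\theta);\theta) \neq 0$ together with the smoothness properties of $\pi$ assumed earlier. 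Once differentiability is in hand, the calculation above is essentially a one-line implicit differentiation, so I do not anticipate any substantive obstacle beyond invoking the implicit function theorem.
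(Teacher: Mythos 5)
Your proposal is correct and follows essentially the same route as the paper, which also derives the formula by implicit (total) differentiation of the defining identity $\pi_E(\tau(\theta;q);\theta)=q$ via the chain rule. Your added remarks on the quotient-free rearrangement, the nonvanishing of $\pi_{10}$ from monotonicity, and the implicit function theorem simply make explicit what the paper leaves as "straightforward."
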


\begin{proof}
The result follows from a straightforward application of the chain-rule of
differentiation and the total derivative rule from the defining equation of $\tau(\theta)$
given by
\begin{displaymath}
\pi_E(\tau(\theta;q);\theta) \equiv \frac{\pi(\tau(\theta;q);\theta)
- \pi(0;\theta)}{1 - \pi(0;\theta)} = q.
\end{displaymath}
\end{proof}

We first seek general expressions for the relevant entities needed to
implement the theory in the preceding section specialized to the
quantal-response problem.
Let us suppose that the true model is specified by a
probability measure $\tilde{P}$ with dose-response function
$\tilde{\pi}(\cdot)$, so that given $(N_j,d_j)$, $Y_j$ has a binomial
distribution with parameters $N_j$ and $\tilde{\pi}(d_j) \equiv
\tilde{\pi}_j$. Consider a model class $\mathcal{M}$ which specifies a
dose-response function $\pi_{\mathcal{M}}(\cdot;\theta_{\mathcal{M}})$ where
$\theta_{\mathcal{M}} \in \Theta_{\mathcal{M}}$. To simplify our notation, we
will let
%
$\pi_j(\theta) \equiv \pi_{\mathcal{M}}(d_j;\theta_{\mathcal{M}}),
j=1,2,\ldots,J,$
%
where $J$ is the number of distinct dose levels.

\begin{lemm}
\label{lemm: KL in dose response}
The relevant portion of the Kullback-Leibler divergence for assumed model
class $\mathcal{M}$ and true model $\tilde{P}$ is given by
\begin{displaymath}
K(\theta) = \sum_{j=1}^J N_j \left\{\tilde{\pi}_j \log \pi_j(\theta) + (1 -
\tilde{\pi}_j) \log(1 - \pi_j(\theta)) \right\}.
\end{displaymath}
\end{lemm}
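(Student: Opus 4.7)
The plan is to unpack the Kullback-Leibler divergence in \eqref{KL distance} for the specific case of independent binomial observables and strip away all terms that do not depend on the parameter $\theta$ of the assumed class $\mathcal{M}$. Since the projection $\theta^{*(n)}_{m,m'}$ is defined via the $\arg\max$ of $K^{(n)}$ over $\theta$, any summand that is free of $\theta$ is irrelevant for both the optimization and for the downstream asymptotic machinery of Section \ref{sect-Focused Inference Approach}; this is precisely what the lemma's qualifier ``relevant portion'' captures.

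First I would substitute into \eqref{KL distance} the joint mass function under $\mathcal{M}$, namely $f_{\mathcal{M}}(\mathbf{y};\theta) = \prod_{j=1}^J \binom{N_j}{y_j} \pi_j(\theta)^{y_j}[1-\pi_j(\theta)]^{N_j-y_j}$, and likewise the true joint mass function obtained by replacing $\pi_j(\theta)$ with $\tilde{\pi}_j$. The $\log$-ratio splits as $\log f_{\mathcal{M}}(\mathbf{y};\theta) - \log \tilde f(\mathbf{y})$, and since the second piece does not involve $\theta$, I would simply track $\theta \mapsto \int \log f_{\mathcal{M}}(\mathbf{y};\theta)\,\tilde P(d\mathbf{y}) = E_{\tilde P}[\log f_{\mathcal{M}}(\mathbf{Y};\theta)]$.

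Next, using linearity of expectation, independence of the $Y_j$'s, and the binomial moment identities $E_{\tilde P}[Y_j] = N_j\tilde{\pi}_j$ and $E_{\tilde P}[N_j - Y_j] = N_j(1-\tilde{\pi}_j)$, the expectation of the log-likelihood reduces to
\[
\sum_{j=1}^J E_{\tilde P}\!\left[\log\binom{N_j}{Y_j}\right] + \sum_{j=1}^J N_j\bigl\{\tilde{\pi}_j\log\pi_j(\theta) + (1-\tilde{\pi}_j)\log[1-\pi_j(\theta)]\bigr\}.
\]
The first sum is a constant in $\theta$, so discarding it yields precisely the claimed $K(\theta)$.

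There is no real obstacle here; the only minor point worth emphasizing is justifying that dropping the $\theta$-free terms (both the denominator $\log \tilde f$ and the combinatorial constants $\log\binom{N_j}{Y_j}$) is legitimate for all subsequent uses of $K(\theta)$ — namely, they cancel in both $\arg\max_\theta K^{(n)}$ of \eqref{projection} and in the score and information quantities $U_{mn}$ and $A_{mn}$ needed in Proposition \ref{prop: Asymptotics under Misspec Model}. Once that convention is in place, the displayed formula follows immediately.
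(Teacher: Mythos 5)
Your proposal is correct and follows essentially the same route as the paper's proof: drop the $\theta$-free denominator $d\tilde P/d\nu$ (and the binomial coefficients), take the $\tilde P$-expectation of $\sum_j [Y_j\log\pi_j(\theta) + (N_j-Y_j)\log(1-\pi_j(\theta))]$, and apply $\tilde E(Y_j)=N_j\tilde\pi_j$. Your additional remark justifying why discarding the $\theta$-free terms is harmless for the downstream $\arg\max$, score, and information computations is a welcome elaboration but not a different argument.
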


\begin{proof}
Denote by $P_\theta$ the probability measure determined by
$\pi(\cdot;\theta)$. Denoting by $\nu$ the dominating counting measure for
both $P_\theta$ and $\tilde{P}$, then the KL divergence is
\begin{displaymath}
K(\theta) \equiv K(P_\theta,\tilde{P}) = \int
\log\left[\frac{dP_\theta/d\nu}{d\tilde{P}/d\nu}(\mathbf{y})\right] \tilde{P}(d\mathbf{y}).
\end{displaymath}
Since $d\tilde{P}/d\nu$ does not involve $\theta$, then the portion of this function
involving $\theta$ is given by
\begin{displaymath}
\tilde{E}\left\{
\sum_{j=1}^J \left[
Y_j \log\pi_j(\theta) + (N_j - Y_j)\log(1-\pi_j(\theta))
\right]\right\}
\end{displaymath}
where $\tilde{E}\{\cdot\}$ is the expectation operator with respect to the
probability measure $\tilde{P}$. The result then follows since $\tilde{E}(Y_j)
= N_j \tilde{\pi}_j$ for $j=1,2,\ldots,J$.
\end{proof}

The closest $P_\theta$ determined by the model class $\mathcal{M}$ to the true
$\tilde{P}$ with respect to KL divergence is $P_{\theta^*}$ where
\begin{equation}
\label{closest theta*}
\theta^* \equiv \theta^*_{\mathcal{M}}(\tilde{P}) = \arg\max_{\theta \in
  \Theta_{\mathcal{M}}} K(\theta).
\end{equation}
Such a $\theta^*$ could be obtained via numerical methods, such as using
optimization functions in {\tt R} \citep{R11}, e.g., {\tt optim, optimConstr}, or through
Newton-Raphson or gradient techniques.
%
%
%

\begin{lemm}
\label{lemm: gradient and information of KL}
For the KL divergence function in Lemma \ref{lemm: KL in dose response},
\begin{equation}
\label{score for KL}
U^{(n)}(P_\theta,\tilde{P}) \equiv \nabla K(\theta) = \sum_{j=1}^J
N_j \left[
\frac{\nabla \pi_j(\theta)}{\pi_j(\theta)[1-\pi_j(\theta)]}
\right]
[\tilde{\pi}_j - \pi_j];
\end{equation}
and
\begin{eqnarray}
A^{(n)}(P_\theta,\tilde{P}) & \equiv & - \nabla^{\otimes 2} K(\theta) \nonumber \\
& = & \sum_{j=1}^J N_j \left[\frac{1}{\pi_j(\theta)[1-\pi_j(\theta)]}\right]
\left\{(-\nabla^{\otimes 2}\pi_j(\theta))[\tilde{\pi}_j - \pi_j(\theta)] +
\nonumber \right. \\
&& \left.
\frac{(\nabla \pi_j(\theta))^{\otimes 2}}{\pi_j(\theta)[1-\pi_j(\theta)]}
\left[
\tilde{\pi}_j(1-\tilde{\pi}_j) + (\tilde{\pi}_j - \pi_j(\theta))^2
\right]
\right\}.
\label{info for KL}
\end{eqnarray}
\end{lemm}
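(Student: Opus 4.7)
The proof is a direct calculus computation starting from the expression for $K(\theta)$ given in Lemma \ref{lemm: KL in dose response}, so the plan is to carry out the gradient and Hessian term-by-term in $j$ and then regroup to match the stated forms.

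First I would compute $\nabla K(\theta)$. Since $\tilde{\pi}_j$ and $N_j$ do not depend on $\theta$, the chain rule gives $\nabla \log \pi_j(\theta) = \nabla \pi_j(\theta)/\pi_j(\theta)$ and $\nabla \log[1-\pi_j(\theta)] = -\nabla \pi_j(\theta)/[1-\pi_j(\theta)]$, so each summand contributes $N_j\,\nabla \pi_j(\theta)\bigl\{\tilde{\pi}_j/\pi_j(\theta) - (1-\tilde{\pi}_j)/[1-\pi_j(\theta)]\bigr\}$. Combining over the common denominator $\pi_j(\theta)[1-\pi_j(\theta)]$, the numerator becomes $\tilde{\pi}_j[1-\pi_j(\theta)] - (1-\tilde{\pi}_j)\pi_j(\theta) = \tilde{\pi}_j - \pi_j(\theta)$, which yields exactly equation (\ref{score for KL}).

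Next I would compute $\nabla^{\otimes 2} K(\theta)$ by differentiating each log-term a second time. Using the quotient rule,
\begin{align*}
\nabla^{\otimes 2}\log\pi_j(\theta) &= \frac{\nabla^{\otimes 2}\pi_j(\theta)}{\pi_j(\theta)} - \frac{(\nabla\pi_j(\theta))^{\otimes 2}}{\pi_j(\theta)^2},\\
\nabla^{\otimes 2}\log[1-\pi_j(\theta)] &= -\frac{\nabla^{\otimes 2}\pi_j(\theta)}{1-\pi_j(\theta)} - \frac{(\nabla\pi_j(\theta))^{\otimes 2}}{[1-\pi_j(\theta)]^2}.
\end{align*}
I would then group the coefficients of $\nabla^{\otimes 2}\pi_j(\theta)$ and $(\nabla\pi_j(\theta))^{\otimes 2}$ separately. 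The coefficient of $\nabla^{\otimes 2}\pi_j(\theta)$ simplifies, as in the gradient calculation, to $[\tilde{\pi}_j - \pi_j(\theta)]/\{\pi_j(\theta)[1-\pi_j(\theta)]\}$, which after negation produces the $(-\nabla^{\otimes 2}\pi_j(\theta))[\tilde{\pi}_j - \pi_j(\theta)]$ piece in (\ref{info for KL}).

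The one step that requires a small algebraic identity, and which I expect to be the main (mild) obstacle in the write-up, is handling the coefficient of $(\nabla\pi_j(\theta))^{\otimes 2}$, namely $\tilde{\pi}_j/\pi_j(\theta)^2 + (1-\tilde{\pi}_j)/[1-\pi_j(\theta)]^2$. Putting this over the common denominator $\{\pi_j(\theta)[1-\pi_j(\theta)]\}^2$, the numerator becomes $\tilde{\pi}_j[1-\pi_j(\theta)]^2 + (1-\tilde{\pi}_j)\pi_j(\theta)^2$, and I would verify by expansion that this equals $\tilde{\pi}_j(1-\tilde{\pi}_j) + [\tilde{\pi}_j - \pi_j(\theta)]^2$ (both sides equal $\tilde{\pi}_j - 2\tilde{\pi}_j\pi_j(\theta) + \pi_j(\theta)^2$). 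This is the recognition step that puts the Hessian into the conditional-variance plus squared-bias form appearing in (\ref{info for KL}).

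Finally, multiplying the full second-derivative expression by $-1$ (which flips the sign in front of $\nabla^{\otimes 2}\pi_j(\theta)$ and leaves the sign of $(\nabla\pi_j(\theta))^{\otimes 2}$-coefficient positive) and pulling out the common factor $1/\{\pi_j(\theta)[1-\pi_j(\theta)]\}$ yields precisely the stated formula for $A^{(n)}(P_\theta,\tilde{P})$, completing the proof. No regularity assumptions beyond the existence of first and second derivatives of $\pi_j(\theta)$ in $\theta$ are needed, since the computation is pointwise in $\theta$.
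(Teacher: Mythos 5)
Your computation is correct: the gradient step, the grouping of the Hessian into the $\nabla^{\otimes 2}\pi_j$ and $(\nabla\pi_j)^{\otimes 2}$ pieces, and the key identity $\tilde{\pi}_j[1-\pi_j(\theta)]^2 + (1-\tilde{\pi}_j)\pi_j(\theta)^2 = \tilde{\pi}_j(1-\tilde{\pi}_j) + [\tilde{\pi}_j-\pi_j(\theta)]^2$ all check out. The paper simply declares these results ``straightforward hence omitted,'' and your direct term-by-term differentiation is exactly the intended argument, written out in full.
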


\begin{proof}
Proofs of these two results are straightforward hence omitted.
\end{proof}

Expressions (\ref{score for KL}) and (\ref{info for KL}) could be utilized to
obtain $\theta^*$ via Newton-Raphson iteration given by the updating
\begin{displaymath}
\theta^* \leftarrow \theta^* + [A(P_{\theta^*},\tilde{P})]^{-1}
U(P_{\theta^*},\tilde{P}).
\end{displaymath}
From (\ref{score for KL}), we also deduce the following intuitive
result. Suppose that $\tilde{P}$ is determined by a model class $\tilde{M}$
which is contained in the model class $\mathcal{M}$, so that
\begin{displaymath}
\tilde{\pi}_{\tilde{\mathcal{M}}}(\cdot;\tilde{\theta}) =
\pi_{\mathcal{M}}(\cdot;(\tilde{\theta},\eta^0))
\end{displaymath}
for every $\tilde{\theta}=\tilde{\theta}_{\tilde{M}} \in \Theta_{\tilde{\mathcal{M}}}$ and for some
vector $\eta^0$. Then, it follows that, for this situation, we have
\begin{displaymath}
\{\tilde{\mathcal{M}} \subset \mathcal{M}\}
\Rightarrow \{\theta^* = \theta^*_{\mathcal{M}}(\tilde{\mathcal{M}}) = (\tilde{\theta},\eta^0)\}.
\end{displaymath}

The next quantity that we need is the covariance of the score function of
model $p_{\mathcal{M}}(\cdot;\theta)$ under the true model $\tilde{P}$ defined via:
\begin{displaymath}
\Sigma^{(n)}(P_\theta,\tilde{P}) = Cov\{\nabla
\log p_{\mathcal{M}}(\mathbf{Y};\theta) | \tilde{P}\}.
\end{displaymath}

\begin{lemm}
\label{lemm: covariance KL}
\begin{displaymath}
\Sigma^{(n)}(P_\theta,\tilde{P}) = \sum_{j=1}^J N_j
\frac{\tilde{\pi}_j (1 - \tilde{\pi}_j)}{[\pi_j(\theta) (1 -
    \pi_j(\theta))]^2}
(\nabla \pi_j(\theta))^{\otimes 2}.
\end{displaymath}
\end{lemm}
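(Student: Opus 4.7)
The plan is to compute the covariance directly by first writing down the score function explicitly and then exploiting independence of the binomial observations under $\tilde{P}$.

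First I would differentiate the log-likelihood. Up to a constant not involving $\theta$, $\log p_{\mathcal{M}}(\mathbf{Y};\theta) = \sum_{j=1}^J \{Y_j \log \pi_j(\theta) + (N_j - Y_j)\log(1 - \pi_j(\theta))\}$. Taking the gradient with respect to $\theta$ and combining the two fractions over the common denominator $\pi_j(\theta)[1-\pi_j(\theta)]$ gives the score in the clean form
\begin{equation*}
\nabla \log p_{\mathcal{M}}(\mathbf{Y};\theta) = \sum_{j=1}^J \frac{Y_j - N_j \pi_j(\theta)}{\pi_j(\theta)[1-\pi_j(\theta)]}\,\nabla \pi_j(\theta).
\end{equation*}
This is the same calculation already implicit in Lemma 2, so it requires no new ideas.

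Next I would take the covariance under $\tilde{P}$. Under $\tilde{P}$ the $Y_j$'s are independent with $Y_j \sim B(N_j, \tilde{\pi}_j)$, so $\mathrm{Var}(Y_j\mid\tilde{P}) = N_j \tilde{\pi}_j(1-\tilde{\pi}_j)$ and the cross-covariances vanish. Since the factor multiplying $Y_j$ in the $j$th summand is the deterministic vector $\nabla \pi_j(\theta)/\{\pi_j(\theta)[1-\pi_j(\theta)]\}$, the covariance of the score reduces to a sum of outer products weighted by $\mathrm{Var}(Y_j\mid\tilde{P})$, yielding
\begin{equation*}
\Sigma^{(n)}(P_\theta,\tilde{P}) = \sum_{j=1}^J \frac{N_j \tilde{\pi}_j(1-\tilde{\pi}_j)}{[\pi_j(\theta)(1-\pi_j(\theta))]^2}\,(\nabla \pi_j(\theta))^{\otimes 2},
\end{equation*}
which is the claimed expression.

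There is no genuine obstacle here; the only step requiring a moment of care is the algebraic simplification of the score, and the mild bookkeeping check that the $N_j - N_j\pi_j(\theta)$ pieces cancel against $N_j\pi_j(\theta)$ so that only $Y_j - N_j\pi_j(\theta)$ remains in the numerator. Everything else is just independence across $j$ and the binomial variance formula. I would therefore present the proof as two short displayed equations with a one-line justification (independence and $\mathrm{Var}(Y_j\mid\tilde P) = N_j\tilde\pi_j(1-\tilde\pi_j)$) between them.
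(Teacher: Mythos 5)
Your computation is correct: the score simplifies to $\sum_j (Y_j - N_j\pi_j(\theta))\,\nabla\pi_j(\theta)/\{\pi_j(\theta)[1-\pi_j(\theta)]\}$, and independence plus the binomial variance $N_j\tilde\pi_j(1-\tilde\pi_j)$ gives exactly the claimed sum of outer products. The paper omits this proof as ``straightforward,'' and what you have written is precisely the intended direct calculation, so there is nothing to add.
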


\begin{proof}
Again, the proof of this result is straightforward, hence omitted.
\end{proof}

With these quantities at hand, we are then able to estimate the limit
matrices $\Sigma_{\mathcal{M}}(\tilde{P})$ and $A_{\mathcal{M}}(\tilde{P})$
via
\begin{equation}
\hat{\Sigma}_{\mathcal{M}}(\tilde{P}) = \frac{1}{n}
\Sigma^{(n)}(P_{\theta^*},\tilde{P});
\label{estimator of Sigma}
\end{equation}
and
\begin{equation}
\label{estimator of A-matrix}
\hat{A}_{\mathcal{M}}(\tilde{P}) = \frac{1}{n}
A^{(n)}(P_{\theta^*},\tilde{P}),
\end{equation}
where $n = \sum_{j=1}^J N_j$. In turn, we are able to estimate the
$\Xi$-matrix from Proposition \ref{prop: Asymptotics under Misspec Model} via
\begin{equation}
\label{estimator of Xi}
\hat{\Xi}_{\mathcal{M}}(\tilde{P}) = [\hat{A}_{\mathcal{M}}(\tilde{P})]^{-1}
\hat{\Sigma}_{\mathcal{M}}(\tilde{P}) [\hat{A}_{\mathcal{M}}(\tilde{P})]^{-1},
\end{equation}
and the $\Gamma$-matrix from Theorem \ref{theo: Asymptotics under Misspec Model} via
\begin{equation}
\label{estimator of Gamma}
\hat{\Gamma}_{\mathcal{M}}(\tilde{P}) =
    [\stackrel{\bullet}{\tau}_{\mathcal{M}}(\theta^*)]\trp
[\hat{\Xi}_{\mathcal{M}}(\tilde{P})]
    [\stackrel{\bullet}{\tau}_{\mathcal{M}}(\theta^*)],
\end{equation}
where $\tau_{\mathcal{M}}(\cdot;\theta)$ is the BMD function under model
class $\mathcal{M}$.

Let us denote by $\tau(\tilde{P}) = \tau(\tilde{P};q)$ the
BMD function at BMR value $q$ under the probability measure $\tilde{P}$.
The BMD point estimator is
\begin{equation}
\label{BMD estimator under M}
\hat{\tau}_{\mathcal{M}} = \tau_{\mathcal{M}}(\hat{\theta}_{\mathcal{M}};q),
\end{equation}
where $\hat{\theta}_{\mathcal{M}}$ is the ML estimator of
$\theta_{\mathcal{M}}$ under model class $\mathcal{M}$.
When the true probability measure is
$\tilde{P}$, an estimate of the (decision-theoretic) risk of
$\hat{\tau}_{\mathcal{M}}$ is given by
\begin{equation}
\label{estimate of risk}
\hat{R}(\hat{\tau}_{\mathcal{M}},\tilde{P}) =
\hat{\Gamma}_{\mathcal{M}}(\tilde{P}) +
n[\tau_{\mathcal{M}}(\theta^*_{\mathcal{M}}(\tilde{P});q) -
  \tau(\tilde{P};q)]^2.
\end{equation}

Next, we obtain a nonparametric estimator of the true dose-response
function $\tilde{\pi}(\cdot)$. Given the observable $\{(Y_j,N_j,d_j), j=1,2,\ldots,J\}$, a
simple estimator of $(\tilde{\pi}(d_j), j=1,2,\ldots,J)$ is given by
\begin{equation}
\label{empirical estimate of probs}
\hat{\mathbf{\pi}} \equiv (\hat{\pi}_1,\hat{\pi}_2,\ldots,\hat{\pi}_J) = \left(
\frac{Y_1}{N_1}, \frac{Y_2}{N_2}, \ldots, \frac{Y_J}{N_J}\right).
\end{equation}
However, this estimator need not satisfy the monotonicity
constraint. As such, to obtain a nonparametric estimator which upholds the
monotonicity constraint, we apply the Pooled-Adjacent-Violators-Algorithm
(PAVA) (cf. \cite{RobWriDyk88}) to the estimator $\hat{\mathbf{\pi}}$ to obtain the
estimator
\begin{equation}
\label{PAVA empirical}
\check{\mathbf{\pi}} = \mbox{PAVA}(\hat{\mathbf{\pi}}).
\end{equation}
Over the region $[d_1, d_J]$, we then form the estimator $\check{\pi}(\cdot)$ of
$\tilde{\pi}(\cdot)$
as the piecewise linear function whose value at $d
= d_j$ is $\check{\pi}_j$ for $j=1,2,\ldots,J$.
This mimics a piecewise-linear, isotonic construct employed by \cite{PieXioBha12} for estimating
monotone dose-response functions in benchmark analysis.
We shall denote by $\check{P}$
the probability measure on $\mathbf{Y}$ induced by $\check{\pi}(\cdot)$.

On the other hand, for model class $\mathcal{M}^\prime$ with dose-response
function $\pi_{\mathcal{M}^\prime}(\cdot;\theta_{\mathcal{M}^\prime})$, by
replacing $\theta_{\mathcal{M}^\prime}$ by its ML estimator
$\hat{\theta}_{\mathcal{M}^\prime}$, we are also able to obtain an estimator
of the dose-response function given by
$\pi_{\mathcal{M}^\prime}(\cdot;\hat{\theta}_{\mathcal{M}^\prime})$, which in
turn induces the model-based estimated probability measure
$\hat{P}_{\mathcal{M}^\prime}$.

We are now in proper position to describe focused model selectors and
estimators. Let us denote by $\mathfrak{M}$ the collection of all model
classes under consideration, with generic element denoted by $\mathcal{M}$.
We then have the collection of risk estimates
\begin{equation}
\label{collection of empirical risk estimates}
\{\hat{R}(\hat{\tau}_{\mathcal{M}},\check{P}): \mathcal{M} \in \mathfrak{M}\}.
\end{equation}
Our empirical-based model selector becomes
\begin{equation}
\label{empirical model selector}
\hat{\mathcal{M}}^{EMP} = \arg\min_{\mathcal{M} \in \mathfrak{M}}
\hat{R}(\hat{\tau}_{\mathcal{M}},\check{P}),
\end{equation}
with corresponding empirical-based BMD estimator at BMR value of $q$ given by
\begin{equation}
\label{BMD empirical estimator}
\hat{\tau}^{EMP}(q) =
\tau_{\hat{\mathcal{M}}^{EMP}}(\hat{\theta}_{\hat{\mathcal{M}}^{EMP}};q).
\end{equation}

Following our theoretical prescription in Section \ref{sect-Focused Inference
  Approach} we also obtain the collection of risk estimates
\begin{equation}
\label{collection of model based risk estimates}
\left\{\hat{R}(\hat{\tau}_{\mathcal{M}}, \hat{P}_{\mathcal{M}^\prime}): \mathcal{M}
\in \mathfrak{M}, \mathcal{M}^\prime \in \mathfrak{M}\right\}
\end{equation}
where
\begin{displaymath}
\hat{R}(\hat{\tau}_{\mathcal{M}}, \hat{P}_{\mathcal{M}^\prime}) =
\hat{\Gamma}_{\mathcal{M}}(\hat{P}_{\mathcal{M}^\prime}) +
n[\tau_{\mathcal{M}}(\theta^*_{\mathcal{M}}(\hat{P}_{\mathcal{M}^\prime});q) -
\tau(\check{P};q)]^2.
\end{displaymath}
Note that in computing the bias, we use the empirical estimate $\tau(\check{P};q)$ of the true BMD
instead of the model-based estimate
$\tau_{\mathcal{M}^\prime}(\hat{\theta}_{\mathcal{M}^\prime};q)$ of the true
BMD.

The next model selector under consideration is defined via
\begin{equation}
\label{focused model selector}
\hat{\mathcal{M}}^{FM} = \arg\min_{\mathcal{M}^\prime \in \mathfrak{M}}
\left\{\min_{\mathcal{M} \in \mathfrak{M}} \hat{R}(\hat{\tau}_{\mathcal{M}},
\hat{P}_{\mathcal{M}^\prime})\right\}
\end{equation}
with a corresponding BMD estimator of
\begin{equation}
\label{focused model selector estimator}
\hat{\tau}^{FM}(q) =
\tau_{\hat{\mathcal{M}}^{FM}}(\hat{\theta}_{\hat{\mathcal{M}}^{FM}};q).
\end{equation}
The last model selector for consideration is defined via
\begin{equation}
\label{focused estimator selector}
\hat{\mathcal{M}}^{FE} = \arg\min_{\mathcal{M} \in \mathfrak{M}}
\left\{\min_{\mathcal{M}^\prime \in \mathfrak{M}} \hat{R}(\hat{\tau}_{\mathcal{M}},
\hat{P}_{\mathcal{M}^\prime})\right\}
\end{equation}
with a corresponding BMD estimator of
\begin{equation}
\label{focused estimator}
\hat{\tau}^{FE}(q) =
\tau_{\hat{\mathcal{M}}^{FE}}(\hat{\theta}_{\hat{\mathcal{M}}^{FE}};q).
\end{equation}

We now consider two special model classes commonly employed in benchmark
analysis \citep{ShaSma11}:
the logistic model class and the multi-stage model class. We will utilize
these model classes in our illustration and in the computer simulations.

\subsection{Logistic Model Class}

The logistic model class of order $p$ has the dose-response function given by
\begin{equation}
\label{logistic model class}
\pi(d;\mathbf{\beta}) = \frac{\exp\{\mathbf{d}\trp\mathbf{\beta}\}}
{1 + \exp\{\mathbf{d}\trp\mathbf{\beta}\}}
\end{equation}
where
\begin{equation}
\label{dvec}
\mathbf{d} = (1, d, \ldots, d^p)\trp
\quad \mbox{and} \quad
\mathbf{\beta} = (\beta_0, \beta_1, \ldots, \beta_p)\trp.
\end{equation}
The parameter $\mathbf{\beta}$ takes values in $\Theta = \Re^{p+1}$.
Let us also define the matrix
\begin{equation}
\label{Dmat}
\mathbf{D} = \mathbf{d}^{\otimes 2} = \mathbf{d}\mathbf{d}\trp.
\end{equation}
For this logistic dose-response function, we routinely find that
\begin{displaymath}
\nabla \pi(d;\mathbf{\beta}) = \pi(d;\beta) [1 - \pi(d;\beta)] \mathbf{d}
\end{displaymath}
(note that the parameter here is labeled $\beta$ instead of $\theta$
so the operators
$\nabla$ and $\nabla^{\otimes 2}$ are with respect to $\beta$)
and
\begin{displaymath}
-\nabla^{\otimes 2} \pi(d;\mathbf{\beta}) =
\pi(d;\mathbf{\beta})[1-\pi(d;\mathbf{\beta}))][2\pi(d;\mathbf{\beta})-1]
  \mathbf{D}.
\end{displaymath}
Using these
quantities, we achieve the following simplified expressions,
where $\mathbf{d}_j$ and $\mathbf{D}_j$ are
given in (\ref{dvec}) and
(\ref{Dmat}) with $d$ replaced by $d_j$:

\begin{prop}
\label{prop: simplifications for logistic}
Under the logistic model class $\mathcal{M}$ of order $p$,
\begin{eqnarray*}
A^{(n)}_{\mathcal{M}}(\beta;\tilde{P}) & = & \sum_{j=1}^J N_j
\pi(d_j;\mathbf{\beta})[1-\pi(d_j;\mathbf{\beta})] \mathbf{D}_j; \\
\Sigma^{(n)}_{\mathcal{M}}(\mathbf{\beta};\tilde{P}) & = &
\sum_{j=1}^J N_j \tilde{\pi}(d_j) [1 - \tilde{\pi}(d_j)] \mathbf{D}_j.
\end{eqnarray*}
\end{prop}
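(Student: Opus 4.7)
The proof is purely computational: I would substitute the two logistic-specific derivative identities
$$\nabla \pi(d;\beta) = \pi(d;\beta)[1-\pi(d;\beta)]\,\mathbf{d}, \qquad -\nabla^{\otimes 2}\pi(d;\beta) = \pi(d;\beta)[1-\pi(d;\beta)][2\pi(d;\beta)-1]\,\mathbf{D}$$
(already recorded in the text above) into the general expressions for $\Sigma^{(n)}$ and $A^{(n)}$ furnished by Lemma \ref{lemm: covariance KL} and Lemma \ref{lemm: gradient and information of KL}. No further probabilistic or analytic input is required; the proposition is the algebraic collapse that occurs once these identities are inserted. It is convenient to treat $\Sigma^{(n)}_{\mathcal{M}}$ first, then $A^{(n)}_{\mathcal{M}}$, since the former is essentially a one-line cancellation.

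For $\Sigma^{(n)}_{\mathcal{M}}$ the computation is immediate. The outer-product factor in Lemma \ref{lemm: covariance KL} becomes $(\nabla\pi_j)^{\otimes 2} = \pi_j(\beta)^2[1-\pi_j(\beta)]^2\,\mathbf{D}_j$, and the quadratic factor cancels exactly with the denominator $[\pi_j(\beta)(1-\pi_j(\beta))]^2$. What remains is $\sum_j N_j\tilde\pi_j(1-\tilde\pi_j)\mathbf{D}_j$, as claimed.

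For $A^{(n)}_{\mathcal{M}}$ the substitution is slightly less transparent but still elementary. After inserting the two derivative identities in Lemma \ref{lemm: gradient and information of KL}, the leading factor $1/[\pi_j(1-\pi_j)]$ cancels the factor $\pi_j(1-\pi_j)$ coming from $-\nabla^{\otimes 2}\pi_j$ and, in the second summand, the factor $\pi_j^2(1-\pi_j)^2$ from $(\nabla\pi_j)^{\otimes 2}$ combines with the denominator $\pi_j(1-\pi_j)$ to leave only a single $\pi_j(1-\pi_j)$, which the outer $1/[\pi_j(1-\pi_j)]$ then removes. The resulting scalar coefficient of $\mathbf{D}_j$ is
$$(2\pi_j-1)(\tilde\pi_j-\pi_j) + \tilde\pi_j(1-\tilde\pi_j) + (\tilde\pi_j-\pi_j)^2.$$
The only moment that requires a sliver of inspection is recognizing that this polynomial in $(\pi_j,\tilde\pi_j)$ simplifies to $\pi_j(1-\pi_j)$: expanding term by term, the monomials $\pm\tilde\pi_j$, $\pm\tilde\pi_j^2$, and $\pm 2\pi_j\tilde\pi_j$ each cancel in pairs, leaving $-\pi_j^2+\pi_j=\pi_j(1-\pi_j)$.

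There is no substantive obstacle; the one place where care is warranted is this last cancellation of all $\tilde\pi_j$-dependent terms in the $A^{(n)}$ expansion, which is the mildly surprising feature of the calculation (it is what makes the Fisher-information form of $A$ persist even under misspecification for the canonical logistic link). Once that identity is checked, the two asserted closed-form expressions follow. Since the analogous proofs of Lemma \ref{lemm: gradient and information of KL} and Lemma \ref{lemm: covariance KL} were already dismissed as ``straightforward,'' this proposition may be recorded in the same spirit.
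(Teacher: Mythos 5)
Your computation is correct and is exactly the route the paper intends: substitute the logistic derivative identities into Lemmas \ref{lemm: gradient and information of KL} and \ref{lemm: covariance KL} and simplify, with the key step being the algebraic identity $(2\pi_j-1)(\tilde\pi_j-\pi_j)+\tilde\pi_j(1-\tilde\pi_j)+(\tilde\pi_j-\pi_j)^2=\pi_j(1-\pi_j)$, which you verify correctly. The paper's own proof consists only of the remark that the expressions ``follow easily after simplifications,'' so your write-up supplies the details (and the apt observation about the canonical link) but is not a different argument.
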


\begin{proof}
These expressions follow easily after simplifications.
\end{proof}

In addition, for this logistic model class, we also obtain an explicit form of
the gradient vector function of the BMD function. This is given below, where
for any $a \in [0,1]$, we write $\bar{a} = 1- a$.

\begin{prop}
\label{prop: gradient of BMD for logistic}
Under the logistic model class $\mathcal{M}$ of order $p$,
\begin{displaymath}
\stackrel{\bullet}{\tau}(\mathbf{\beta};q) = \frac{
\bar{q} \left[\begin{array}{c}1\\ 0\\ \vdots\\ 0\end{array}\right]\pi(0;\mathbf{\beta})
  \bar{\pi}(0;\mathbf{\beta})
-
\left[\begin{array}{c}1\\ \tau(\mathbf{\beta};q)\\ \vdots\\ \tau(\mathbf{\beta};q)^p\end{array}\right]
\pi(\tau(\mathbf{\beta};q);\mathbf{\beta}) \bar{\pi}(\tau(\mathbf{\beta};q);\mathbf{\beta})}
{\left(\sum_{j=1}^p \beta_j j \tau(\mathbf{\beta};q)^{j-1}\right)
\pi(\tau(\mathbf{\beta};q);\mathbf{\beta}) \bar{\pi}(\tau(\mathbf{\beta};q);\mathbf{\beta})}
\end{displaymath}
where
\begin{displaymath}
\pi(0;\mathbf{\beta}) = \frac{\exp(\beta_0)}{1 + \exp(\beta_0)}
\quad \mbox{and} \quad
\pi(\tau(\mathbf{\beta};q);\mathbf{\beta}) = q + \bar{q} \pi(0;\mathbf{\beta}).
\end{displaymath}
\end{prop}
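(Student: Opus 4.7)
The plan is to apply the general gradient formula from Proposition \ref{prop: gradient of tau} directly, specialized to the logistic dose-response function, and to derive the explicit value $\pi(\tau(\beta;q);\beta) = q + \bar q\,\pi(0;\beta)$ from the BMD-defining equation. The bulk of the work is bookkeeping: evaluating $\pi_{01}$ and $\pi_{10}$ at $d=0$ and $d=\tau(\beta;q)$, and then substituting.

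First I would compute the two partial-derivative functions for the logistic family. Since $\pi(d;\beta) = e^{\mathbf{d}\trp\beta}/(1+e^{\mathbf{d}\trp\beta})$, the chain rule yields
\begin{displaymath}
\pi_{01}(d;\beta) = \nabla_\beta \pi(d;\beta) = \pi(d;\beta)\bar\pi(d;\beta)\,\mathbf{d},
\end{displaymath}
\begin{displaymath}
\pi_{10}(d;\beta) = \frac{\partial}{\partial d}\pi(d;\beta) = \pi(d;\beta)\bar\pi(d;\beta)\sum_{j=1}^p j\,\beta_j\,d^{\,j-1},
\end{displaymath}
where I have used that $\partial_d(\mathbf{d}\trp\beta) = \sum_{j=1}^p j\beta_j d^{j-1}$. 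This already matches the two different $\mathbf{d}$-vectors that appear in the proposed formula: at $d=0$ we get $\mathbf{d} = (1,0,\ldots,0)\trp$, and at $d=\tau(\beta;q)$ we get $\mathbf{d} = (1,\tau,\ldots,\tau^p)\trp$.

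Second, I would invoke the BMD defining identity $\pi_E(\tau(\beta;q);\beta)=q$, i.e.\ $(\pi(\tau;\beta) - \pi(0;\beta))/(1 - \pi(0;\beta)) = q$. Solving yields
\begin{displaymath}
\pi(\tau(\beta;q);\beta) = q + \bar q\,\pi(0;\beta),
\end{displaymath}
which is the auxiliary identity claimed in the proposition, and which also sits in the denominator through $\pi(\tau;\beta)\bar\pi(\tau;\beta)$. Along with $\pi(0;\beta) = e^{\beta_0}/(1+e^{\beta_0})$, this disposes of the second displayed identity.

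Finally I would plug the two evaluations of $\pi_{01}$ and the evaluation of $\pi_{10}$ at $d=\tau(\beta;q)$ into Proposition \ref{prop: gradient of tau}. The numerator becomes
\begin{displaymath}
\bar q\,\pi(0;\beta)\bar\pi(0;\beta)\,(1,0,\ldots,0)\trp - \pi(\tau;\beta)\bar\pi(\tau;\beta)\,(1,\tau,\ldots,\tau^p)\trp,
\end{displaymath}
and the denominator becomes $\pi(\tau;\beta)\bar\pi(\tau;\beta)\sum_{j=1}^p j\beta_j\tau^{j-1}$, exactly as asserted. There is no real obstacle; the only place one has to be careful is to keep the two different $\mathbf{d}$-vectors straight (the unit vector at $d=0$ versus the full power vector at $d=\tau$) and to note that one does not cancel the common factor $\pi(\tau;\beta)\bar\pi(\tau;\beta)$ from numerator and denominator, since the numerator also contains the $\bar q\pi(0;\beta)\bar\pi(0;\beta)\,e_1$ term, which does not share that factor.
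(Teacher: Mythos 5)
Your proposal is correct and follows essentially the same route as the paper: compute $\pi_{01}(d;\beta)=\pi(d;\beta)\bar\pi(d;\beta)\,\mathbf{d}$ and $\pi_{10}(d;\beta)=\pi(d;\beta)\bar\pi(d;\beta)\sum_{j=1}^p j\beta_j d^{j-1}$ and substitute into Proposition \ref{prop: gradient of tau}. Your explicit derivation of $\pi(\tau(\beta;q);\beta)=q+\bar q\,\pi(0;\beta)$ from the defining equation is a detail the paper leaves implicit, but it is the same argument.
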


\begin{proof}
This follows from Proposition \ref{prop: gradient of tau} since under the
logistic model we have
\begin{displaymath}
\pi_{01}(d;\beta) = \nabla \pi(d;\beta) = \mathbf{d} \pi(d;\beta)
\bar{\pi}(d;\beta)
\end{displaymath}
and
\begin{displaymath}
\pi_{10}(d;\beta) = \frac{\partial}{\partial d} \pi(d;\beta) =
\left[\sum_{j=1}^p \beta_j j d^{j-1}\right] \pi(d;\beta)
\bar{\pi}(d;\beta).
\end{displaymath}
\end{proof}

\subsection{Multi-Stage Model Class}

The multistage model class of order $p$ is characterized by the dose-response
function given by
\begin{equation}
\label{multi-stage dose response}
\pi(d;\mathbf{\beta}) = 1 - \exp\{-\mathbf{d}\trp\mathbf{\beta}\}
\end{equation}
where the parameter space for $\mathbf{\beta}$ is $\Theta = \{\mathbf{\beta}
\in \Re^{p+1}: \mathbf{d}_j\trp\mathbf{\beta} \ge 0, j=1,2,\ldots,J\}$. Then,
it follows easily that
\begin{displaymath}
\nabla \pi(d;\mathbf{\beta}) = \bar{\pi}(d;\mathbf{\beta}) \mathbf{d}
\quad \mbox{and} \quad
-\nabla^{\otimes 2} \pi(d;\mathbf{\beta}) =
\bar{\pi}(d;\mathbf{\beta}) \mathbf{D}.
\end{displaymath}

\begin{prop}
\label{prop: simplifications for multistage}
Under the multistage model class $\mathcal{M}$ of order $p$,
\begin{eqnarray*}
A^{(n)}_{\mathcal{M}}(\beta;\tilde{P}) & = & \sum_{j=1}^J N_j
\frac{\tilde{\pi}(d_j) \bar{\pi}(d_j;\mathbf{\beta})}{\pi(d_j;\mathbf{\beta})^2} \mathbf{D}_j; \\
\Sigma^{(n)}_{\mathcal{M}}(\mathbf{\beta};\tilde{P}) & = &
\sum_{j=1}^J N_j \frac{\tilde{\pi}(d_j) [1- \tilde{\pi}(d_j)]}{\pi(d_j;\mathbf{\beta})^2} \mathbf{D}_j.
\end{eqnarray*}
\end{prop}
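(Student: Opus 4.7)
The plan is to specialize the general expressions in Lemma \ref{lemm: gradient and information of KL} and Lemma \ref{lemm: covariance KL} to the multistage model and simplify. Throughout, abbreviate $\pi_j = \pi(d_j;\beta)$, $\bar{\pi}_j = 1 - \pi_j$, and $\tilde{\pi}_j = \tilde{\pi}(d_j)$.

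The multistage identities $\nabla \pi_j = \bar{\pi}_j\mathbf{d}_j$ and $-\nabla^{\otimes 2}\pi_j = \bar{\pi}_j\mathbf{D}_j$ recorded just above the proposition give $(\nabla \pi_j)^{\otimes 2} = \bar{\pi}_j^2\mathbf{D}_j$. Substituting this last expression into the $\Sigma^{(n)}$ formula of Lemma \ref{lemm: covariance KL} yields
\[
\Sigma^{(n)}_{\mathcal{M}}(\beta;\tilde{P}) = \sum_{j=1}^J N_j \frac{\tilde{\pi}_j(1-\tilde{\pi}_j)}{[\pi_j\bar{\pi}_j]^2}\,\bar{\pi}_j^2 \mathbf{D}_j,
\]
whose $\bar{\pi}_j^2$ factors cancel to produce the claimed formula.

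For $A^{(n)}$, I would substitute the multistage identities into equation (\ref{info for KL}) so that every summand is a scalar multiple of $\mathbf{D}_j$. Consolidating the $\bar{\pi}_j$ factors (one cancellation of $\bar{\pi}_j$ against $\pi_j\bar{\pi}_j$ in the Hessian contribution, and a cancellation of $\bar{\pi}_j^2$ against $(\pi_j\bar{\pi}_j)^2$ in the outer-product contribution) exhibits $A^{(n)}_{\mathcal{M}}$ as $\sum_j (N_j\mathbf{D}_j/\pi_j^2)$ times the bracketed scalar
\[
\pi_j(\tilde{\pi}_j - \pi_j) + \tilde{\pi}_j(1 - \tilde{\pi}_j) + (\tilde{\pi}_j - \pi_j)^2.
\]
The main, and essentially only, obstacle is to see that this three-term polynomial in $\pi_j,\tilde{\pi}_j$ collapses: on expansion the two $\pm\tilde{\pi}_j^2$ contributions cancel, the two $\pm\pi_j^2$ contributions cancel, the $\pi_j\tilde{\pi}_j$ cross terms combine as $\pi_j\tilde{\pi}_j - 2\pi_j\tilde{\pi}_j = -\pi_j\tilde{\pi}_j$, and what survives is $\tilde{\pi}_j - \pi_j\tilde{\pi}_j = \tilde{\pi}_j\bar{\pi}_j$. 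Substituting this simplification back yields the claimed expression for $A^{(n)}_{\mathcal{M}}$.

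As a consistency check, both formulas reduce in the correctly-specified case $\tilde{\pi}_j = \pi_j$ to $\sum_j N_j(\bar{\pi}_j/\pi_j)\mathbf{D}_j$, which is the Fisher information for the multistage model and is in accord with the general identity $A_{m,m}(\theta_m,\theta_m) = \Sigma_{m,m}(\theta_m,\theta_m) = \mathfrak{I}_m(\theta_m)$ noted in Section \ref{sect-Focused Inference Approach}.
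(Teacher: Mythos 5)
Your derivation is correct and follows exactly the route the paper intends: the paper's proof is simply the remark that ``these expressions follow easily after simplification,'' i.e., substitution of the multistage identities $\nabla\pi_j = \bar{\pi}_j\mathbf{d}_j$ and $-\nabla^{\otimes 2}\pi_j = \bar{\pi}_j\mathbf{D}_j$ into Lemmas \ref{lemm: gradient and information of KL} and \ref{lemm: covariance KL}, which is what you carry out. The algebraic collapse of the bracketed scalar to $\tilde{\pi}_j\bar{\pi}_j$ and the consistency check against $\mathfrak{I}_m(\theta_m)$ are both right.
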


\begin{proof}
These expressions follow easily after simplification.
\end{proof}

And, finally, we also have for this multistage model class:

\begin{prop}
\label{prop: gradient of BMD for multistage}
Under the multistage model class $\mathcal{M}$ of order $p$,
\begin{displaymath}
\stackrel{\bullet}{\tau}(\mathbf{\beta};q) = \frac{
\bar{q} \left[\begin{array}{c}1\\ 0\\ \vdots\\ 0\end{array}\right] \bar{\pi}(0;\mathbf{\beta})
-
\left[\begin{array}{c}1\\ \tau(\mathbf{\beta};q)\\ \vdots\\ \tau(\mathbf{\beta};q)^p\end{array}\right]
\pi(\tau(\mathbf{\beta};q);\mathbf{\beta})}
{\left(\sum_{j=1}^p \beta_j j \tau(\mathbf{\beta};q)^{j-1}\right)
\bar{\pi}(\tau(\mathbf{\beta};q);\mathbf{\beta})}
\end{displaymath}
where
\begin{displaymath}
\bar{\pi}(0;\mathbf{\beta}) = \exp(-\beta_0)
\quad \mbox{and} \quad
{\pi}(\tau(\mathbf{\beta};q);\mathbf{\beta}) = 1 - \bar{q} \exp(-\beta_0).
\end{displaymath}
\end{prop}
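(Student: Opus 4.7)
The plan is to mimic the proof strategy of the analogous logistic-case proposition: invoke the general gradient formula in Proposition \ref{prop: gradient of tau} and substitute the explicit forms of $\pi_{01}$ and $\pi_{10}$ for the multistage dose-response function $\pi(d;\beta)=1-\exp\{-\mathbf{d}\trp\beta\}$. Since the enclosing paper has already written down the needed derivatives just before the statement, namely $\nabla_\beta\pi(d;\beta)=\bar\pi(d;\beta)\mathbf{d}$ and $-\nabla^{\otimes 2}\pi(d;\beta)=\bar\pi(d;\beta)\mathbf{D}$, most of the work is already available.

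First I would record, for the multistage model, the two partial-derivative specializations: $\pi_{01}(d;\beta) = \nabla_\beta \pi(d;\beta) = \mathbf{d}\,\bar\pi(d;\beta)$, and, treating $d$ as a scalar, $\pi_{10}(d;\beta) = \frac{\partial}{\partial d}\pi(d;\beta) = \bar\pi(d;\beta)\,\sum_{j=1}^p j\beta_j d^{j-1}$ (the $j=0$ term vanishes because $d^0$ is constant in $d$). Next I would evaluate these at the two relevant points: at $d=0$, the vector $\mathbf{d}$ collapses to $(1,0,\ldots,0)\trp$, giving $\pi_{01}(0;\beta)=\bar\pi(0;\beta)(1,0,\ldots,0)\trp$; at $d=\tau(\beta;q)$, one obtains $\pi_{01}(\tau;\beta)=\bar\pi(\tau;\beta)(1,\tau,\ldots,\tau^p)\trp$ and $\pi_{10}(\tau;\beta)=\bar\pi(\tau;\beta)\sum_{j=1}^p j\beta_j \tau^{j-1}$.

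Plugging these into the general formula
$$\stackrel{\bullet}{\tau}(\beta;q) = \frac{\bar q\,\pi_{01}(0;\beta)-\pi_{01}(\tau;\beta)}{\pi_{10}(\tau;\beta)}$$
from Proposition \ref{prop: gradient of tau} immediately produces the displayed expression. The two auxiliary identities are then verified directly: $\bar\pi(0;\beta)=\exp(-\beta_0)$ follows by setting $d=0$ in $1-\pi(d;\beta)=\exp(-\mathbf{d}\trp\beta)$, while $\pi(\tau(\beta;q);\beta)=1-\bar q\exp(-\beta_0)$ follows from the defining equation $\pi_E(\tau)=q$, which rearranges to $\pi(\tau)=q+\bar q\,\pi(0)=q+\bar q(1-e^{-\beta_0})=1-\bar q e^{-\beta_0}$.

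There is no genuine obstacle here; the argument is mechanical. The only item meriting a brief sanity check is that the derivative $\frac{d}{dd}(\mathbf{d}\trp\beta)$ drops the constant term (so the sum runs from $j=1$), and that one is careful to distinguish the scalar derivative in $d$ (giving $\pi_{10}$) from the vector gradient in $\beta$ (giving $\pi_{01}$). After those two housekeeping points, the identity follows by direct substitution and can be written in a single line, exactly as the author handles the logistic counterpart.
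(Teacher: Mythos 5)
Your proof is correct and takes essentially the same route as the paper's, which likewise just invokes Proposition \ref{prop: gradient of tau} together with the multistage derivatives $\pi_{01}(d;\mathbf{\beta})=\mathbf{d}\,\bar{\pi}(d;\mathbf{\beta})$ and $\pi_{10}(d;\mathbf{\beta})=\bar{\pi}(d;\mathbf{\beta})\sum_{j=1}^p j\beta_j d^{j-1}$. One caveat: direct substitution yields $\bar{\pi}(\tau(\mathbf{\beta};q);\mathbf{\beta})$, not $\pi(\tau(\mathbf{\beta};q);\mathbf{\beta})$, as the factor multiplying $(1,\tau(\mathbf{\beta};q),\ldots,\tau(\mathbf{\beta};q)^p)\trp$ in the numerator, so the displayed statement appears to contain a typographical slip rather than your substitution ``immediately producing'' it verbatim.
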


\begin{proof}
Again, this is straightforward following easily from Proposition
\ref{prop: gradient of tau} and the fact that under the multistage model we
have
\begin{displaymath}
\pi_{01}(d;\beta) = \mathbf{d}\bar{\pi}(d;\beta)
\quad \mbox{and} \quad
\pi_{10}(d;\beta) = \bar{\pi}(d;\beta) \left[\sum_{j=1}^p \beta_j j
  d^{j-1}\right].
\end{displaymath}
\end{proof}

\section{Example: Nasal Carcinogenicity in Laboratory Rodents}
\label{sec: Application}

We illustrate the methods discussed in the preceding sections via a toxicological
dose-response data set studied by \cite{NitPieWes07} and originally described in \cite{KusLasDrew75}. 
The data represent occurrences of respiratory tract tumors in male rats after inhalation exposure to
the industrial compound bis(chloromethyl)ether (BCME), a chloroalkyl ether known to have toxic respiratory effects in mammals.
The quantal-response data are reproduced in Table \ref{table: real data}.
In applying the procedures, instead of using the actual concentrations (in ppm) we standardize
the values to range from 0 to 1, obtained by dividing the
original concentrations by 100. This is the second row in
the table which will serve as our $d_j$'s. There are seven concentration levels in this
data set (including the zero-concentration control).

\begin{table}[h]
\caption{Quantal response data from a carcinogenicity experiment of bis(chloromethyl)ether (BCME).
`Orig Conc' indicates original exposure concentrations (ppm), while `Std Conc' indicates standardized concentrations.}
\label{table: real data}
\begin{center}
\begin{tabular}{|l||c |c |c |c |c |c |c||} \hline
Orig Conc & 0 & 10 & 20 & 40 & 60 & 80 & 100\\\hline
Std Conc ($d_j$) &  0 & .1 & .2 & .4 & .6 & .8 & 1.0\\ \hline
Subjects ($N_j$) & 240 & 41 & 46 & 18 & 18 & 34 & 20 \\\hline
Events ($Y_j$) & 0 & 1 & 3 & 4 & 4 & 15 & 12 \\ \hline
\end{tabular}
\end{center}
\end{table}

For purposes of illustration, we consider the problem of estimating the BMD
with these data at the standard BMR values of $q = 0.01, 0.05, 0.10$ \citep{USEPA12}.
We place under consideration
the logistic model classes of orders $p = 1, 2$, referred to
respectively as Models LG1 and LG2, and the multistage model
classes of order $p =1, 2$, referred to as Models MS1 and MS2.
The model selectors considered are described
in Table \ref{table: model selectors}, while the BMD estimators considered are
described in Table \ref{table: BMD Estimators}. All procedures were
implemented using {\tt R}, with the likelihood maximization for the logistic model
classes performed using the object function {\tt optim}, while the likelihood
maximization under the multistage model classes were performed using the
object function {\tt optimConstr}. Finding the zeroes of a function was
performed by the object function {\tt uniroot}. The PAVA was implemented using the object
function {\tt pava}, which is contained in the {\tt Iso} package in {\tt R}.

\begin{table}
\caption{Description of Model Selectors Considered in Illustration and in
  Simulations.}
\label{table: model selectors}
\begin{center}
\begin{tabular}{|c|l|} \hline
Model Selector Label & Description \\ \hline
FIC1 & Model selector given by $\hat{\mathcal{M}}^{FE}$ in (\ref{focused estimator selector}).
 \\
FIC2 & Model selector given by $\hat{\mathcal{M}}^{FM}$ in (\ref{focused model selector}). \\
FIC3 & Model selector given by $\hat{\mathcal{M}}^{EMP}$ in (\ref{empirical model selector}). \\
AIC & Akaike information criteria based model selector. \\
BIC & Bayesian information criteria based model selector. \\ \hline
\end{tabular}
\end{center}
\end{table}

\begin{table}
\caption{Description of BMD Model Estimators Considered in Illustration and in
  Simulations.}
\label{table: BMD Estimators}
\begin{center}
\begin{tabular}{|c|l|} \hline
BMD Estimator Label & Description \\ \hline
FIC1 & BMD estimator given by $\hat{\tau}^{FE}(q)$ in (\ref{focused estimator}).
 \\
FIC2 & BMD estimator given by $\hat{\tau}^{FM}(q)$ in (\ref{focused model selector estimator}). \\
FIC3 & BMD estimator given by $\hat{\tau}^{EMP}(q)$ in (\ref{BMD empirical estimator}). \\
AIC & MLE of BMD of the AIC-selected model class. \\
BIC & MLE of BMD of the BIC-selected model class. \\
AICModAve & AIC-based model averaged estimator of BMD. \\
BICModAve & BIC-based model averaged estimator of BMD. \\
NONPAR & BMD Estimator from the PAVA Estimator of $\pi(\cdot)$. \\ \hline
\end{tabular}
\end{center}
\end{table}

%

The model selected by each of the model selectors FIC1, FIC2, and FIC3 will
depend on the chosen BMR value, while the models
selected by AIC and BIC remain
independent of the
BMR. For the BCME carcinogenicity data in Table \ref{table: real data},
the models selected at the three different BMR values are given in
Table \ref{table: example model selected}.

\begin{table}
\caption{Model class selected by each of the five model selectors for
  different values of the BMR for the carcinogencity data set. {\em Legend:} LG1 = Logistic order 1; LG2 = Logistic
  order 2; MS1 = Multistage order 1;
MS2 = Multistage order 2.}
\label{table: example model selected}
\begin{center}
\begin{tabular}{|c|c|c|c|} \hline
 Model Selector &    $\mbox{BMR} =0.01$ & \mbox{BMR} = 0.05  & $\mbox{BMR} =0.1$  \\ \hline
FIC1  &    MS2 & MS2 & MS1  \\
FIC2   &   MS2 & MS2 & LG1 \\
FIC3 &     LG2 & MS1 & MS1 \\
AIC  &    MS2 & MS2 & MS2 \\
BIC   &   MS1 & MS1 & MS1 \\ \hline
\end{tabular}
\end{center}
\end{table}

The interesting aspect about the model selectors FIC1, FIC2, and FIC3 is that
they adapt to the BMR value.
Figure \ref{figure: example estimated probabilities} presents the estimated
dose-response functions evaluated at $d_j, j=1,2,\ldots, 7$,
obtained by each of the model selectors for the two BMR values. We have also
included in these plots the empirical probabilities
and the PAVA estimates, though the empirical probability estimates are masked 
by the PAVA estimates since for this data set these two sets of estimates are identical.
\begin{figure}
\caption{Connected lines of the estimated dose function evaluated at the standardized concentrations 
for the carcinogenecity data set provided by the different model
  selectors for BMR values of $0.01$ (top) and $0.10$ (bottom). Also included are the empirical
  probabilities and the PAVA estimates.}
\label{figure: example estimated probabilities}
\begin{center}
\begin{tabular}{c}
\includegraphics[width=3in,height=\textwidth,angle=-90,trim=15mm 0mm 0mm 0mm]{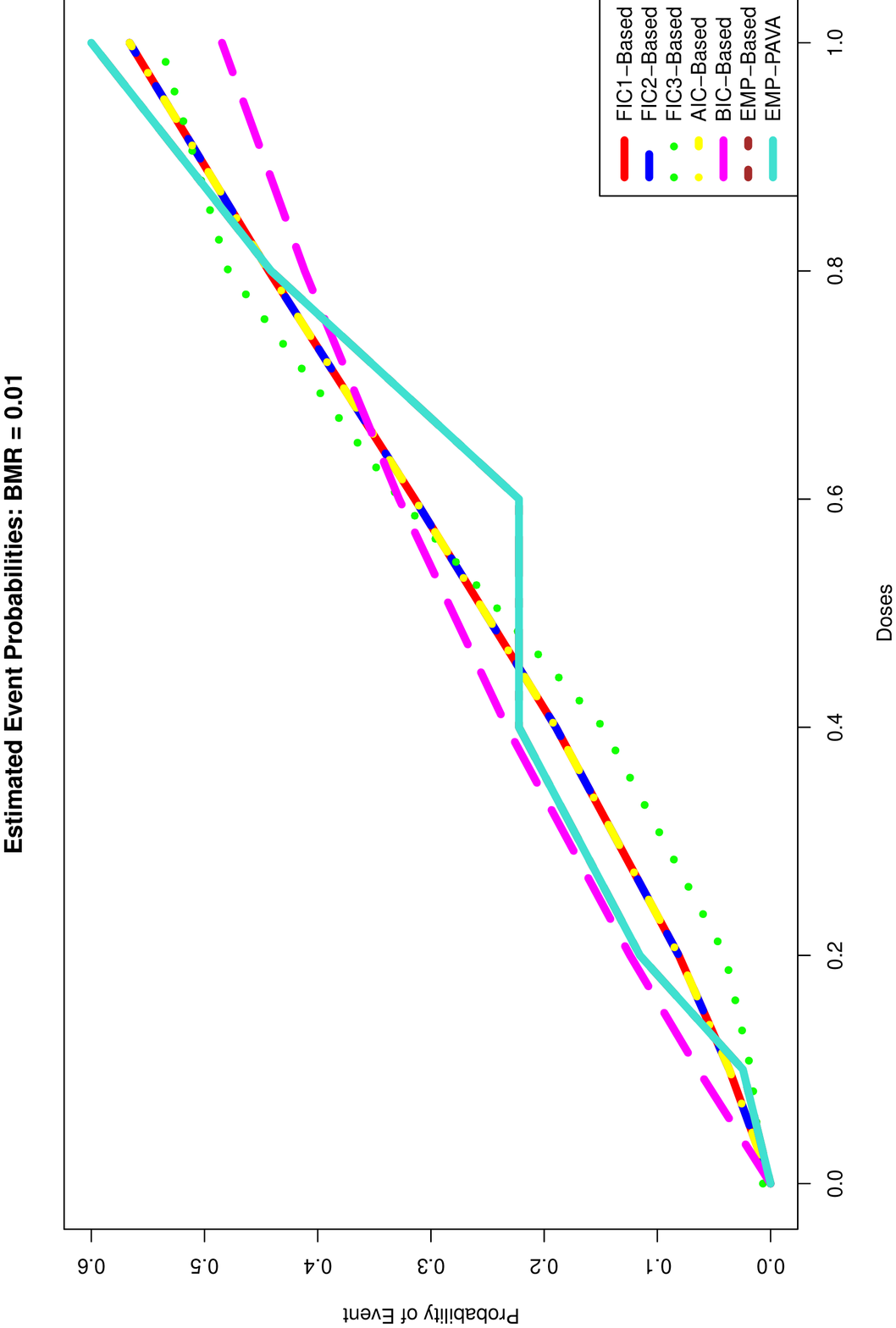} \\
\includegraphics[width=3in,height=\textwidth,angle=-90,trim=15mm 0mm 0mm 0mm]{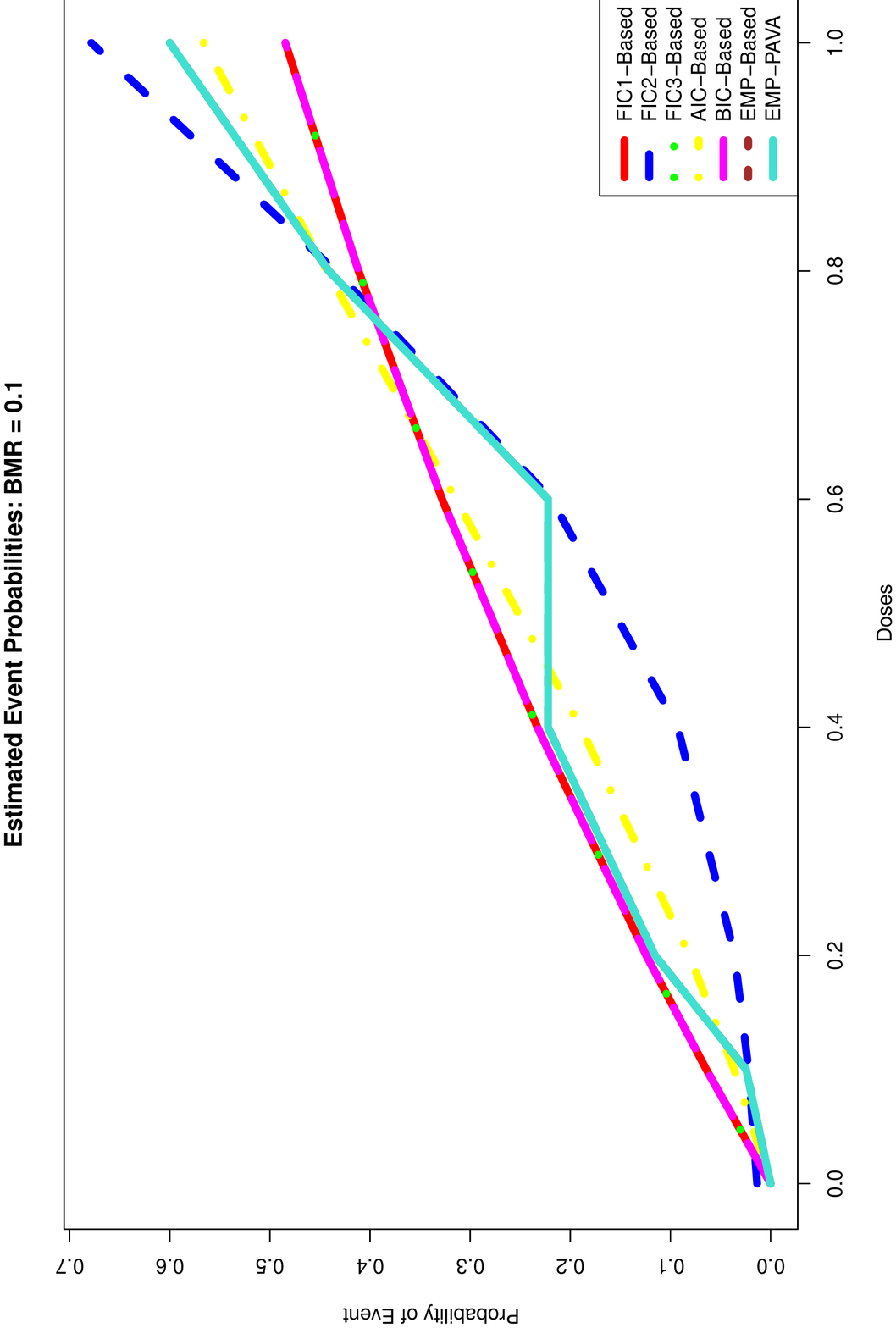}
\end{tabular}
\end{center}
\end{figure}

Table \ref{table: example BMD estimates} provides the BMD estimates,
under our standardized concentration scale,
provided by the eight estimators described in Table \ref{table: BMD Estimators}.
As expected, the estimation procedures for the BMD provide lower estimates
at smaller BMRs. Note that the BMD estimates at BMR = .01 are close to each other
except for that provided by the FIC3 method; while at BMR = .10 the estimate
provided by FIC2 is drastically different from those of the other estimates. This is
tied-in to the fact that at BMR = .10, the chosen model by FIC2, which is LG1, is
highly different from the models chosen by the other methods; see the second
panel in Figure \ref{figure: example estimated probabilities}.
In the next section, we compare the performance of each of these BMD estimators with
respect to their biases, standard errors, and root mean-square errors, under
different scenarios via a modest simulation study.

\begin{table}
\caption{BMD estimates (in standardized concentrations) obtained by the eight estimators for the carcinogenecity data
  set for three different BMR values.}
\label{table: example BMD estimates}
\begin{center}
\begin{tabular}{|l|c|c|c|} \hline
  Estimator &        BMR=0.01 & BMR=0.05 & BMR=0.1 \\ \hline
FIC1    &     0.030 &   0.132 &  0.159 \\
FIC2    &     0.030 &   0.132 &  0.442 \\
FIC3    &     0.095 &   0.077 &  0.159 \\
AIC     &     0.030 &   0.132 &  0.238 \\
BIC     &     0.015 &   0.077  & 0.159 \\
AICModAve  &  0.025 &   0.109 &  0.203 \\
BICModAve &   0.018 &   0.087 &  0.172 \\
NONPAR   &    0.041 &   0.128 &  0.183 \\ \hline
\end{tabular}
\end{center}
\end{table}

\section{Simulation Studies}
\label{sec: Simulations}

In order to compare the performance of the model selectors and BMD estimators
illustrated in the Example, we performed a short series of computer simulation
experiments. Our basic design for each simulation experiment
was to generate dose-response data for a given set of $(N_j,d_j)$ values, $j=1,2,\ldots,J$,
from a specific, true dose-response function $\pi(\cdot)$.  For this
data set, and over a range of BMR values, we obtained the selected model by each
model selector and
apply each BMD estimator just as in
the Example above. This process was replicated ${\tt MREPS = 2000}$ times.
For these {\tt MREPS} replications, we summarize the
performance of the model selector by tabulating the number of times that it
had chosen a given model class, and for the BMD estimators we obtain the mean, bias,
standard error, and root mean-squared error. Note that we are able to compute
the bias of each estimate since we know the exact BMD values
under the true model
for each BMR. The competing model classes
utilized in the simulation coincide with those in the data illustration of
Section \ref{sec: Application}.

\subsection{Simulation Experiment \#1}

For the first simulation experiment, we used for our true data-generating
model a multistage dose-response function
of order $p=2$ (Model MS2 in Sec.~\ref{sec: Application}) with $\beta$-coefficients
given by
\begin{displaymath}
\mathbf{\beta} = (\beta_0, \beta_1, \beta_2) = (0, 0.32, 0.52).
\end{displaymath}
This choice is motivated by the estimated $\beta$-coefficients of the
fitted model MS2 from the
BCME carcinogenicity data example above.
We also utilized the
vector of doses from these data, given by
$$(d_1,d_2,\ldots,d_7) = (0, 0.1, 0.2, 0.4, 0.6, 0.8, 1.0),$$
and vector of number of subjects, given by
$$(N_1,N_2,\ldots,N_7) = (240,  41,  46,  18,  18,  34, 20).$$
The true simulation
dose-response function is plotted in Figure \ref{fig: true response function expt 1}.

\begin{figure}
\caption{The true multistage dose-response function under Model MS2 for simulation experiment 1.}
\label{fig: true response function expt 1}
\includegraphics[width=3in,height=\textwidth,angle=-90]{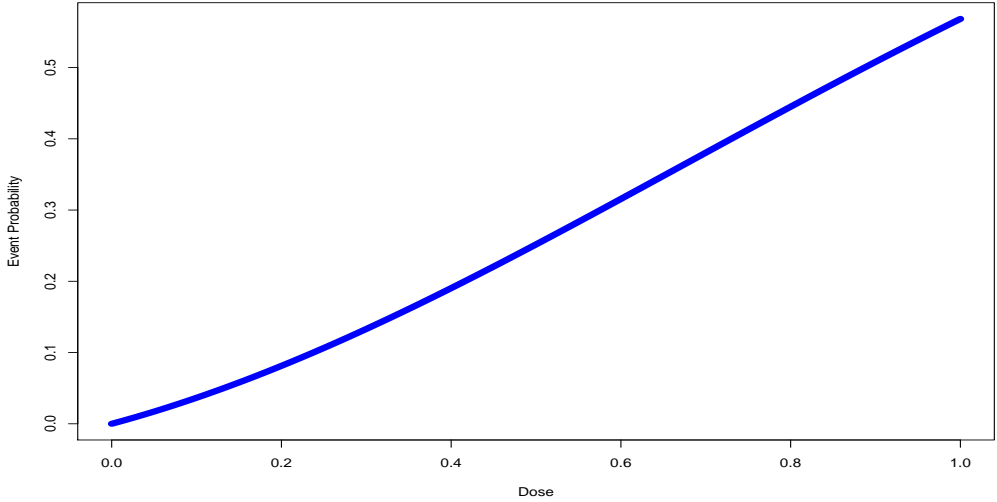}
\end{figure}

Figure \ref{fig: expt 1 boxplots} presents comparative boxplots of the BMD
estimates obtained by the eight BMD estimators for BMRs of $0.01$ and $0.10$.
Figures \ref{fig: mean, bias expt 1}  and \ref{fig: se, rmse expt 1} present the
corresponding
bias and root mean-squared error
plots
for BMRs of $0.01$ and $0.10$.
For additional resolution in the graphs, we also include results at
BMR = $0.05$.  Each of the curves are
plotted as a function of BMR.

From the results of this particular simulation study, we observe that
the BIC estimator underestimates the true BMD value, while the FIC3 estimator overestimates
the true BMD value. The AIC Model-Averaged estimator, as well as the
BIC Model-Averaged estimator, performed better than the others; with the
AIC, FIC1, and PAVA-based Nonparametric estimators having comparatively
mid-level performance.
The BIC, FIC2, and FIC3 did not fare well relative to the other estimators.

With respect to the model selectors, from  Table \ref{table: expt 1 model choices of five selectors},
we observe that the BIC model selector
hardly chose the correct model, though both the AIC and BIC model selectors
tended to choose the lower-order multistage model (MS1). The FIC1 model selector
did quite well, as well as the FIC2 model selector. The FIC3 model selector did
not also choose the correct model, and appeared undecided between the LG2 and MS1
models especially at BMR-values of 0.05 and 0.10. Note that the AIC and BIC selectors'
model choices do not vary with the BMR, whereas for the other three
selectors the model choices do depend on the BMR value under
consideration.

\begin{figure}
\caption{Comparative boxplots of the BMD estimates obtained by the eight
  different estimation schemes in simulation experiment \#1 for BMR values $0.01$ (top) and $0.1$ (bottom). The gray
  horizontal line is the true BMD for the given BMR value. (Estimation scheme labels are given in Table \ref{table: model selectors}.)}
\label{fig: expt 1 boxplots}
\begin{center}
\begin{tabular}{c}
\includegraphics[width=3in,height=\textwidth,angle=-90]{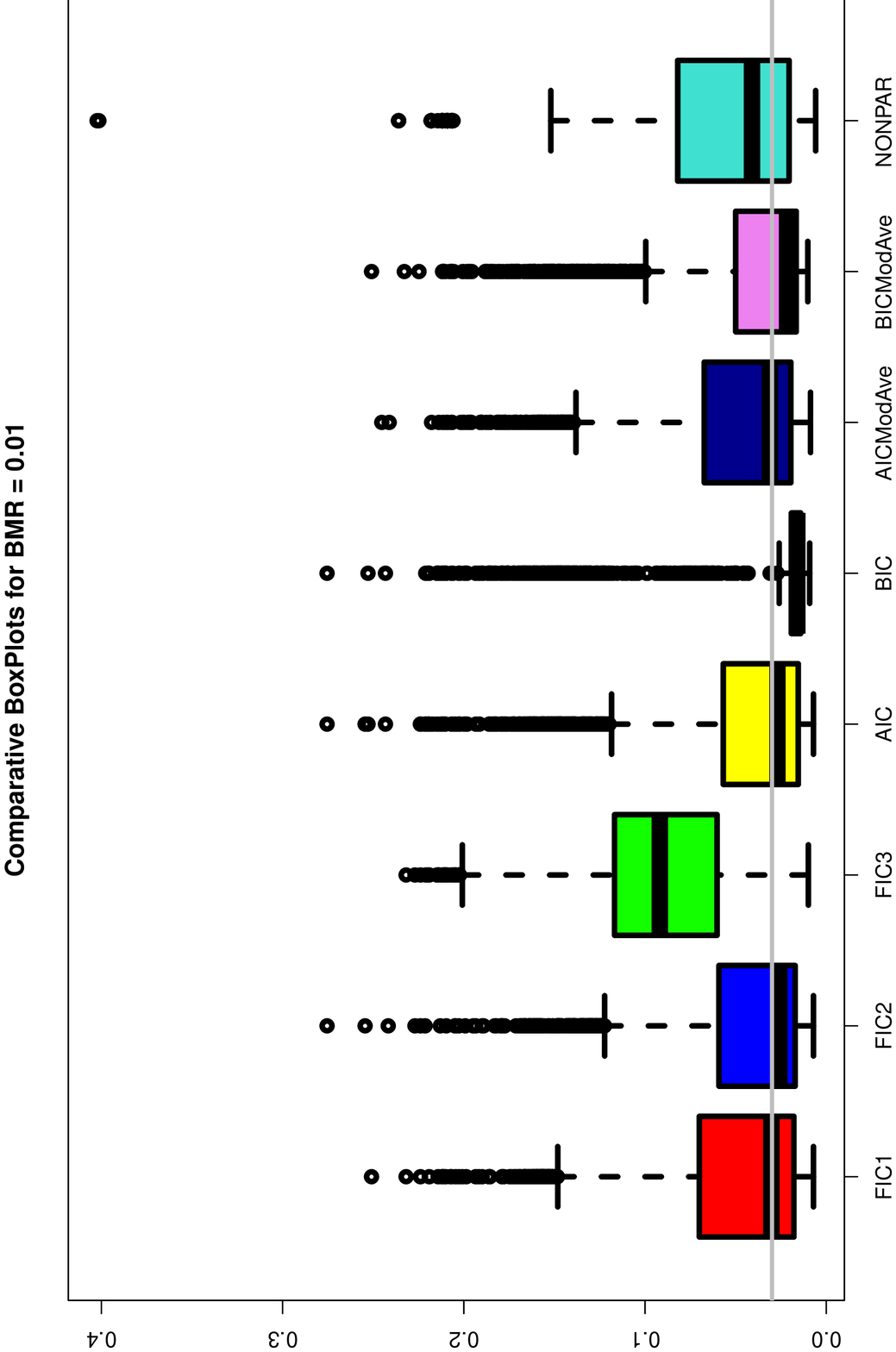} \\
\includegraphics[width=3in,height=\textwidth,angle=-90]{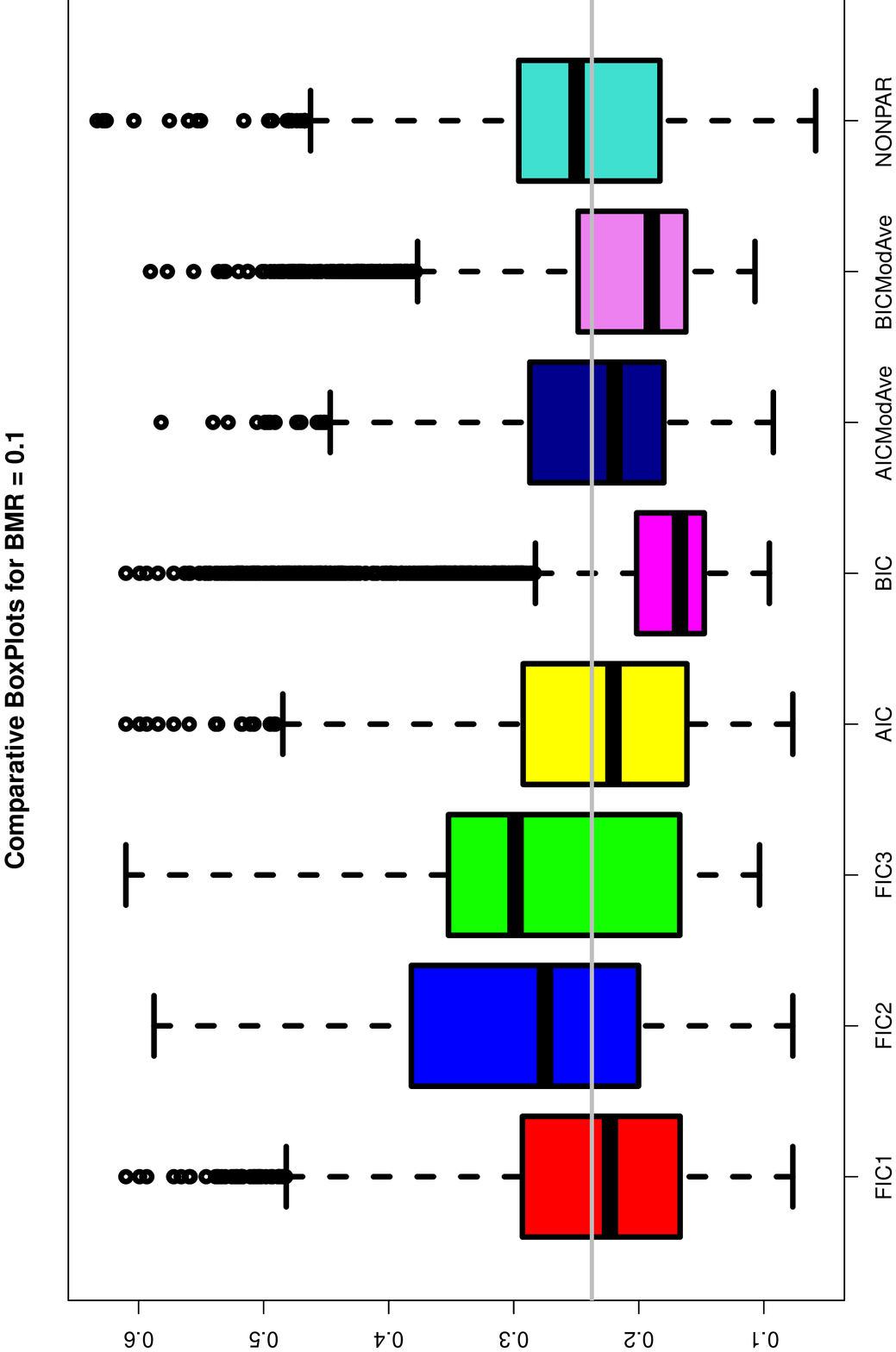}
\end{tabular}
\end{center}
\end{figure}

\begin{figure}
\caption{Bias plots of the eight BMD
  estimators for simulation experiment \#1 for BMR values $0.01, 0.05, 0.1$. For each BMR and each estimator, 
the bias is the average of the differences between the BMD estimates and the true BMD value over the $2000$ simulations.}
\label{fig: mean, bias expt 1}
\begin{center}
\begin{tabular}{c}
\includegraphics[width=3in,height=\textwidth,angle=-90]{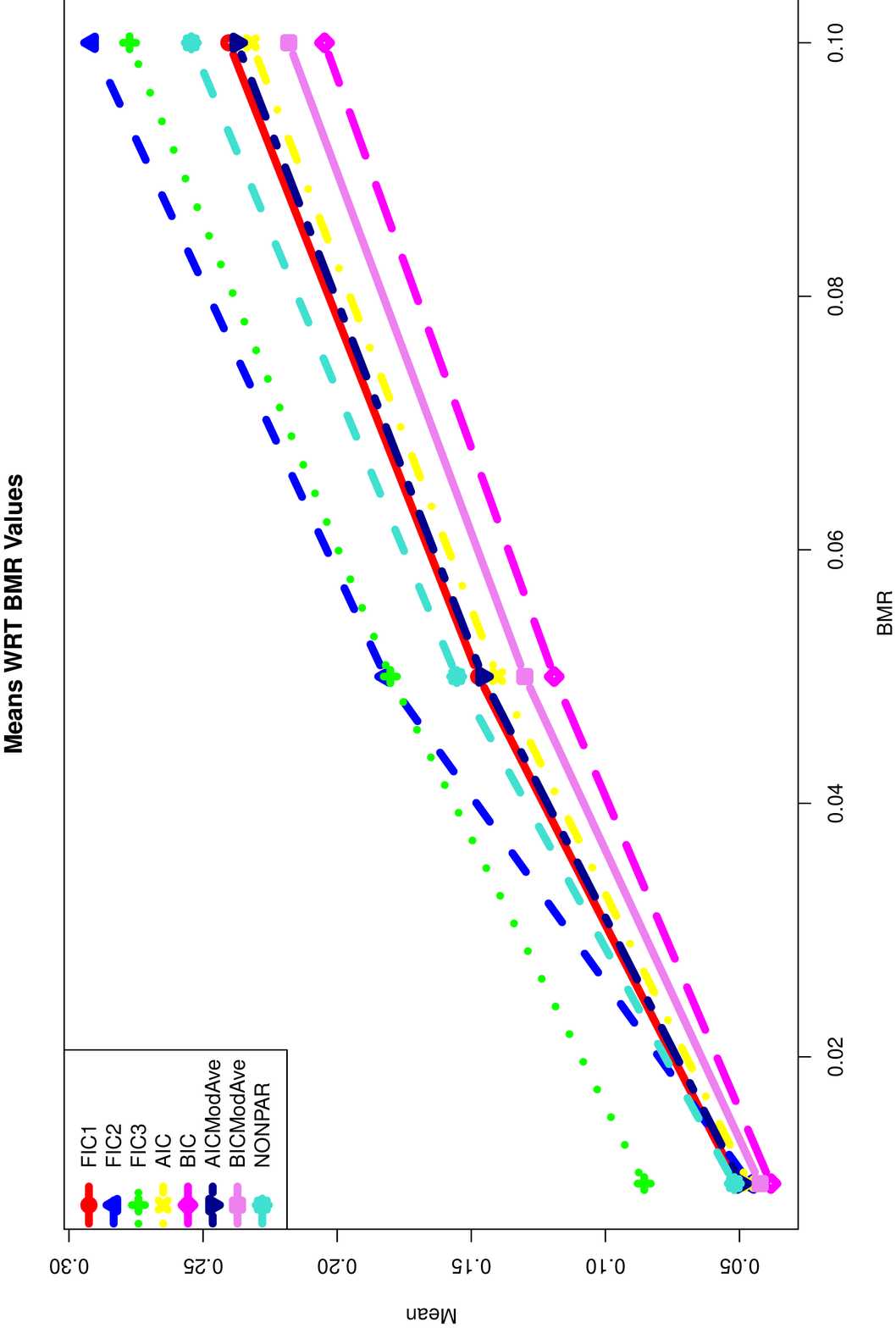} \\
\includegraphics[width=3in,height=\textwidth,angle=-90]{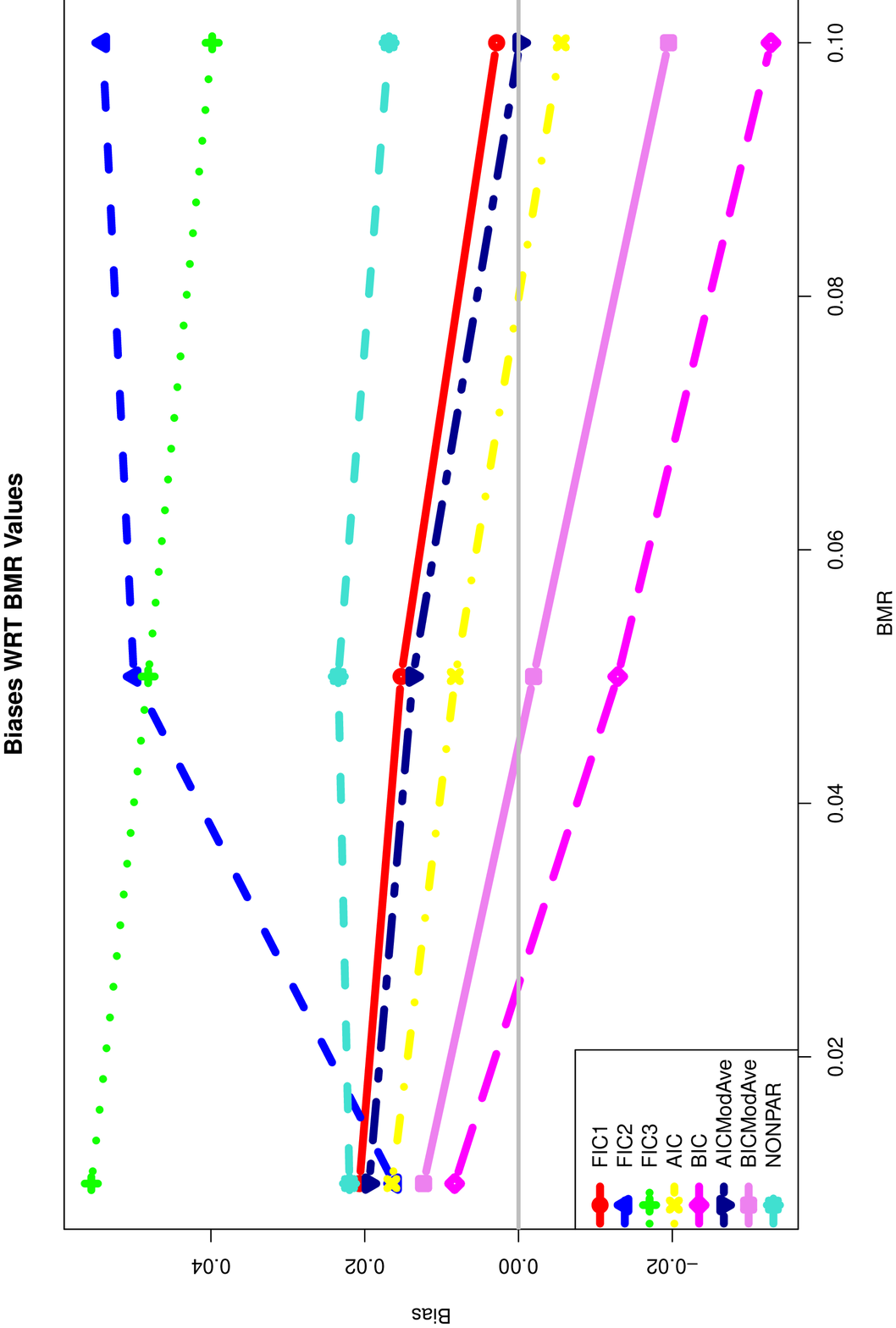}
\end{tabular}
\end{center}
\end{figure}
\begin{figure}
\caption{Root Mean-Square Error plots of the eight BMD
  estimators for simulation experiment \#1 for BMR values $0.01, 0.05, 0.1$. For each BMR and each estimator, 
the Root Mean-Square Error is the average of the squared differences between the BMD estimates and the true BMD value
over $2000$ simulations.}
\label{fig: se, rmse expt 1}
\begin{center}
\begin{tabular}{c}
\includegraphics[width=3in,height=\textwidth,angle=-90]{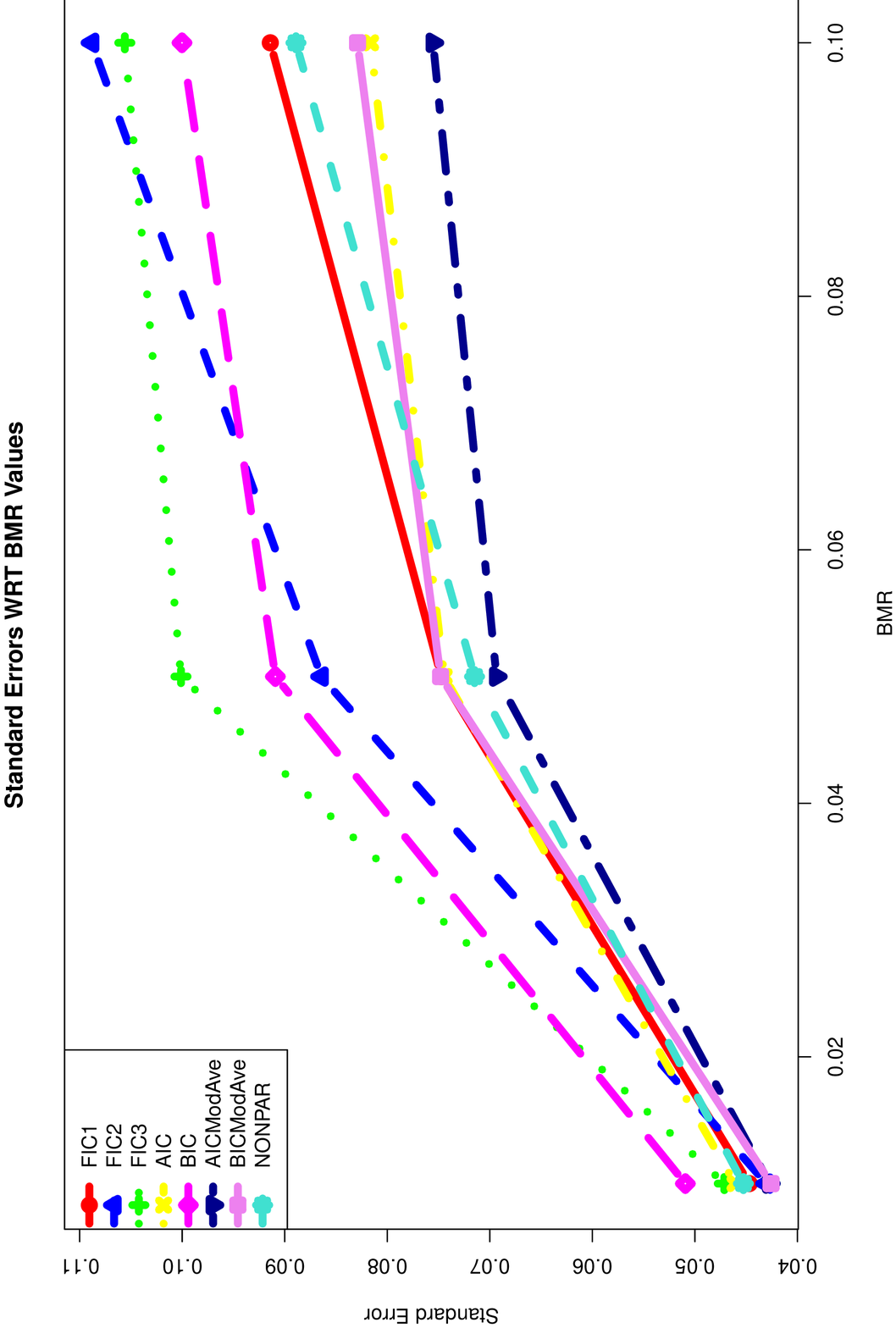} \\
\includegraphics[width=3in,height=\textwidth,angle=-90]{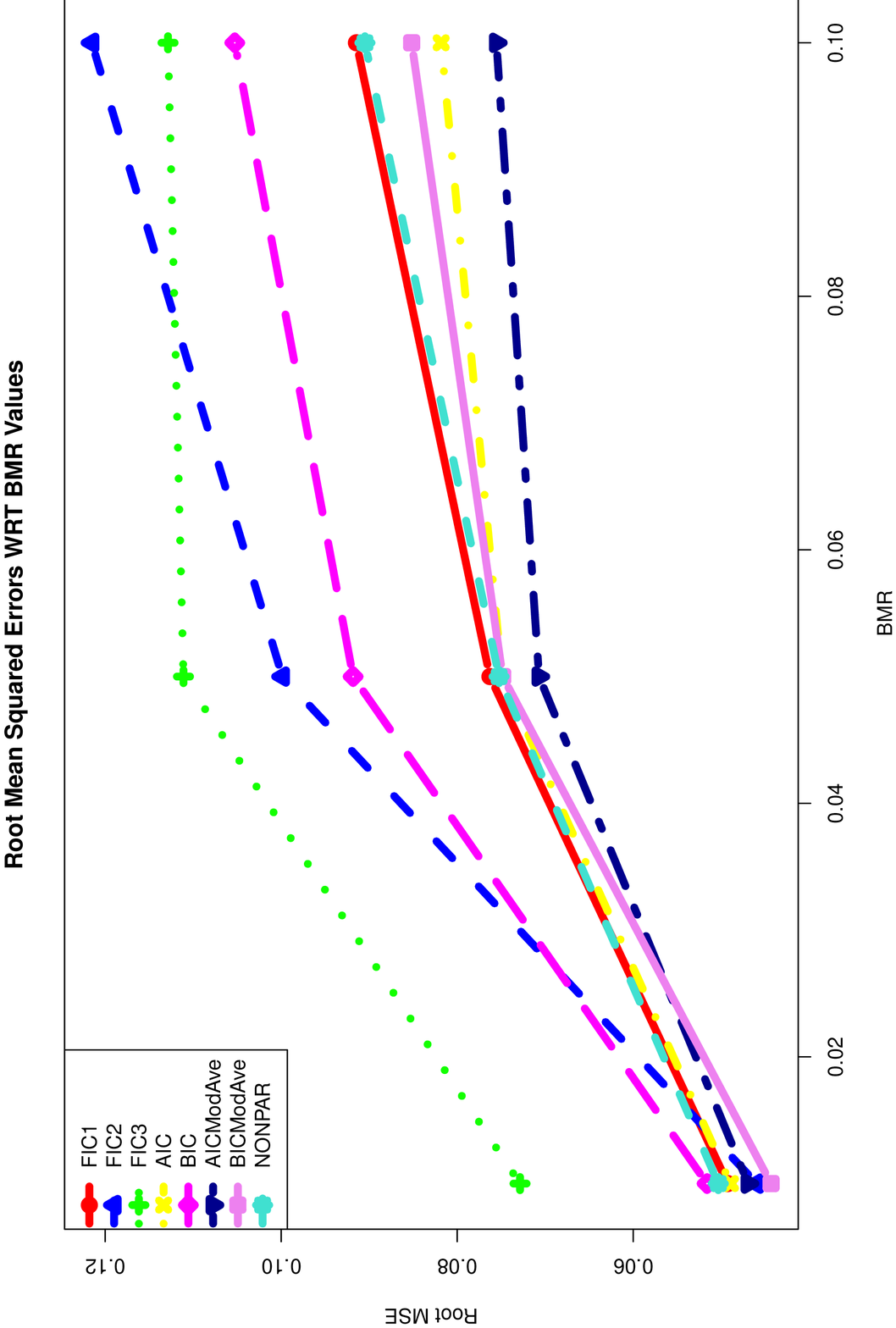}
\end{tabular}
\end{center}
\end{figure}

\begin{table}
\caption{Percentages of model selection by the five model selectors across the 2000 replications
  of Experiment \#1.
  A multistage model of order 2 ({\bf MS2}) is the true generating model, as described in Sec.~\ref{sec: Simulations}.}
\label{table: expt 1 model choices of five selectors}
\begin{center}
\begin{tabular}{|c||c||c|c|c|c|}
 \hline
Selector & BMR-Value & LG1 & {LG2} & MS1 & {\bf MS2}
\\ \hline\hline
     AIC & ALL   & 1.30  & 7.15   &  47.20   & {\bf 44.35}  \\
     BIC & ALL   & 7.55   & 3.60   &  81.00   & {\bf 7.85}  \\ \hline\hline
    FIC1 & 0.01   & 11.15   & 2.10   &  18.35   & {\bf 68.40}  \\
    FIC2 & 0.01   & 1.70   & 14.50   &  25.00   & {\bf 58.80}  \\
    FIC3 & 0.01   & 24.35   & 52.20   &  23.45  & {\bf 0}  \\ \hline\hline
    FIC1 & 0.05   & 2.80   & 5.55   &  22.15   & {\bf 69.50}  \\
    FIC2 & 0.05   & 9.55   & 27.90   &  10.10   & {\bf 52.45}  \\
    FIC3 & 0.05   & 2.80   & 51.55   &  45.00   & {\bf 0.65}  \\ \hline\hline
    FIC1 & 0.10   & 5.95   & 6.40   &  31.35   & {\bf 56.30}  \\
   FIC2 & 0.10   &  21.55   & 20.30   &  13.50   & {\bf 44.65}  \\
   FIC3 & 0.10   & 6.60   & 48.55   &  37.00   & {\bf 7.85}  \\ \hline\hline
\end{tabular}
\end{center}
\end{table}

\subsection{Additional Experiments}

We performed eight additional simulation experiments
using different dose-response functions
with varied shapes. The characteristics of each
true generating model are provided in Table \ref{table: true models for experiments 2 to 9}, where the true model parameters are determined by specifying the values of $\pi(\cdot)$ at two or three dose values. Notice that the true generating models in Experiments \#2 to \#5 have the same constraints on the smallest and largest doses, as do the models in Experiments \#6 to \#9. For each generating model we set $J=4$ doses with $(d_1,d_2,d_3,d_4)=(0,0.25,0.5,1)$ and $J=8$ doses with $(d_1,d_2,\ldots,d_8) = (0, 0.00625, 0.03125, 0.0625, 0.125, 0.25, 0.50, 1.00)$.
The four-dose setting corresponds to a popular design in cancer risk experimentation \citep{Port94}, while the eight-dose setting expands upon this geometric spacing to focus on doses closer to the origin.
Across the doses $d_j$, we took $N_j$ to be constant, i.e., $N_j = N$, and considered three different per-dose samples sizes:
$N = 50, 100, 1000$.
The number
of simulation replications remained ${\tt MREPS} = 2000$. For Experiments \#2 to \#9, we present only the RMSE plots vs. BMR
for $J=4$ doses and $N_j=100, j=1,2,\ldots,J$. These plots are given collectively in Figure \ref{fig: RMSE plots for experiment 2 to 9}. The percentages of selected models by all five model selectors for BMR values of $0.01$ and $0.1$ are presented in comparative bar plots given in Figure \ref{fig: model selection for expt 2 to 9}. Notice that the model selectors based on AIC and BIC
remain independent of BMR, while the model selectors based on FIC1, FIC2, FIC3 differ.

\begin{table}
\caption{Characteristics of true generating models for additional simulation experiments.}
\label{table: true models for experiments 2 to 9}
\begin{center}
\begin{tabular}{|c|c|c|c|} \hline
Experiment & Model & Order & Constraint on the  \\
Number & Class & $p$ & True Dose-Response Function $\pi(\cdot)$\\ \hline\hline
2 & Logistic & 1 & $\pi(0)=0.05$, $\pi(1)=0.50$ \\ \hline
3 & Logistic & 2 & $\pi(0)=0.05$, $\pi(0.5)=0.30$, $\pi(1)=0.50$ \\ \hline
4 & Multistage & 1 & $\pi(0)=0.05$, $\pi(1)=0.50$ \\ \hline
5 & Multistage & 2 & $\pi(0)=0.05$, $\pi(0.5)=0.30$, $\pi(1)=0.50$ \\ \hline
6 & Logistic & 1 & $\pi(0)=0.30$, $\pi(1)=0.75$ \\ \hline
7 & Logistic & 2 & $\pi(0)=0.30$, $\pi(0.5)=0.52$, $\pi(1)=0.75$ \\ \hline
8 & Multistage & 1 & $\pi(0)=0.30$, $\pi(1)=0.75$ \\ \hline
9 & Multistage & 2 & $\pi(0)=0.30$, $\pi(0.5)=0.52$, $\pi(1)=0.75$ \\ \hline
\end{tabular}
\end{center}
\end{table}

\newcommand{\mywidth}{1.5in}
\newcommand{\myheight}{2.6in}
\begin{figure}
\caption{Simulated RMSE plots of the eight BMD estimators at BMR values of $0.01, 0.05$ and $0.10$.
Per-dose sample size is fixed at $N = 100$ at each of $J = 4$ doses.
Plots from top left to bottom left and then top right to bottom right are for Experiments \#2 to \#9.}
\label{fig: RMSE plots for experiment 2 to 9}
\begin{center}
\begin{tabular}{cc}
\includegraphics[width=\mywidth,height=\myheight,angle=-90]{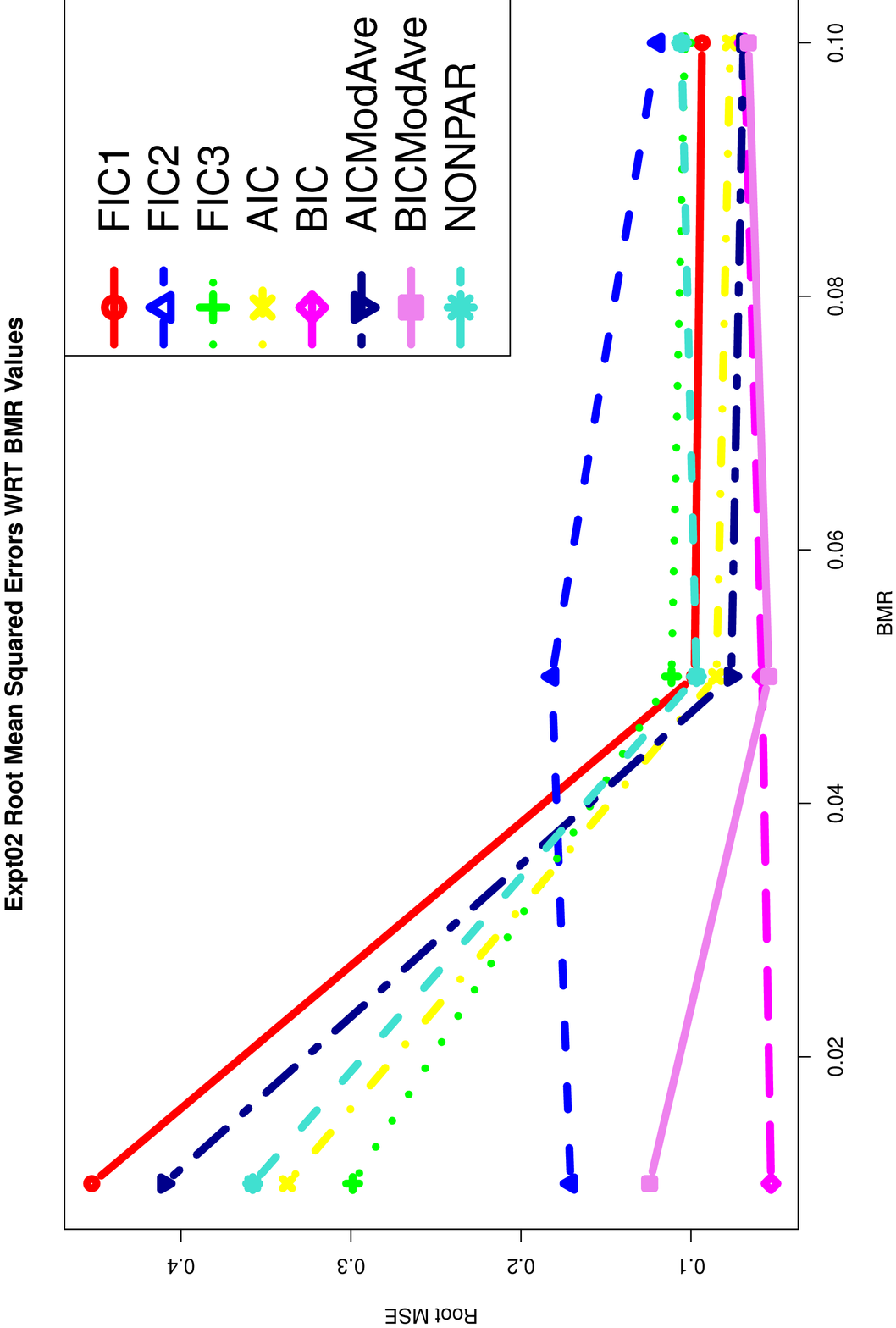} &
\includegraphics[width=\mywidth,height=\myheight,angle=-90]{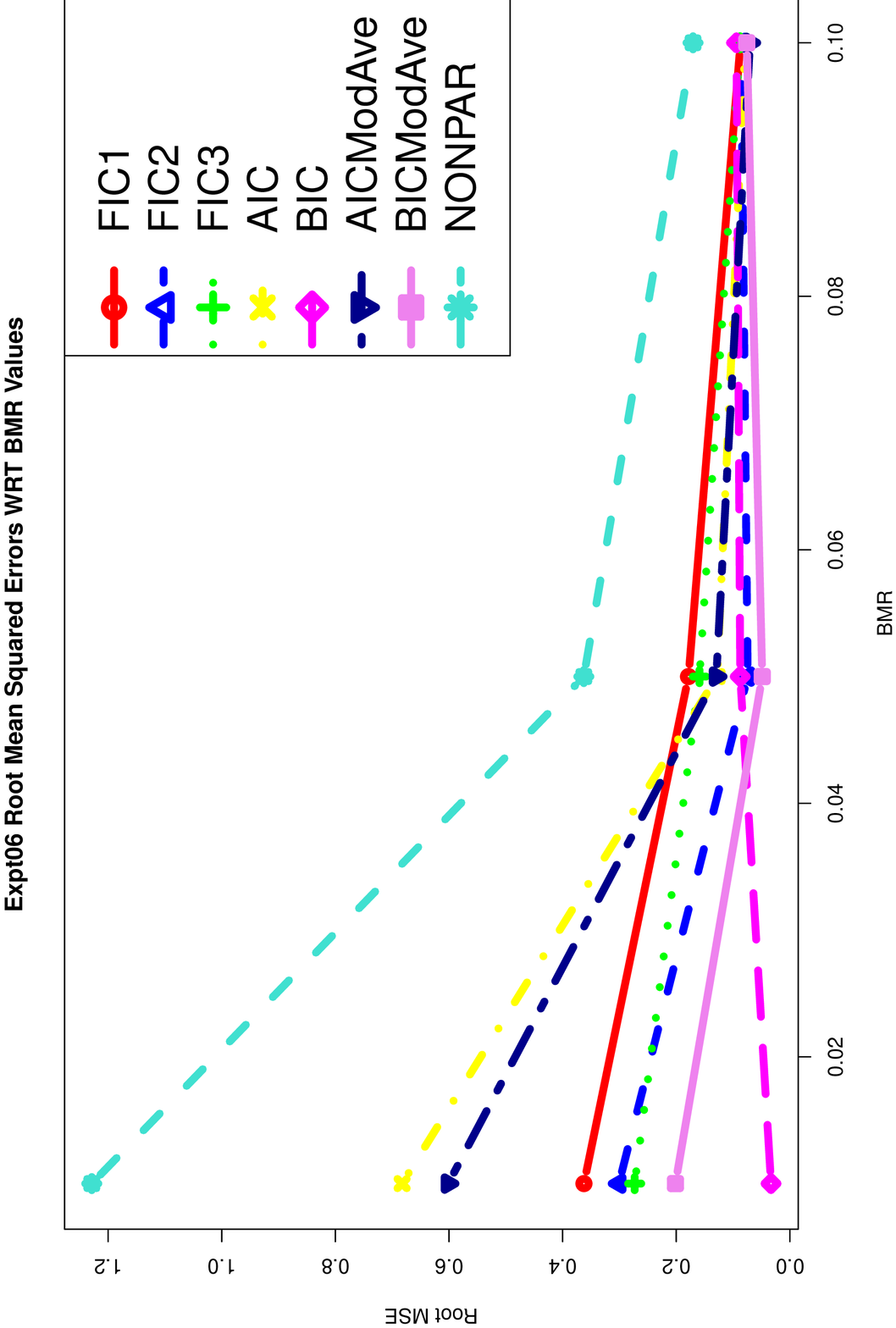} \\
\includegraphics[width=\mywidth,height=\myheight,angle=-90]{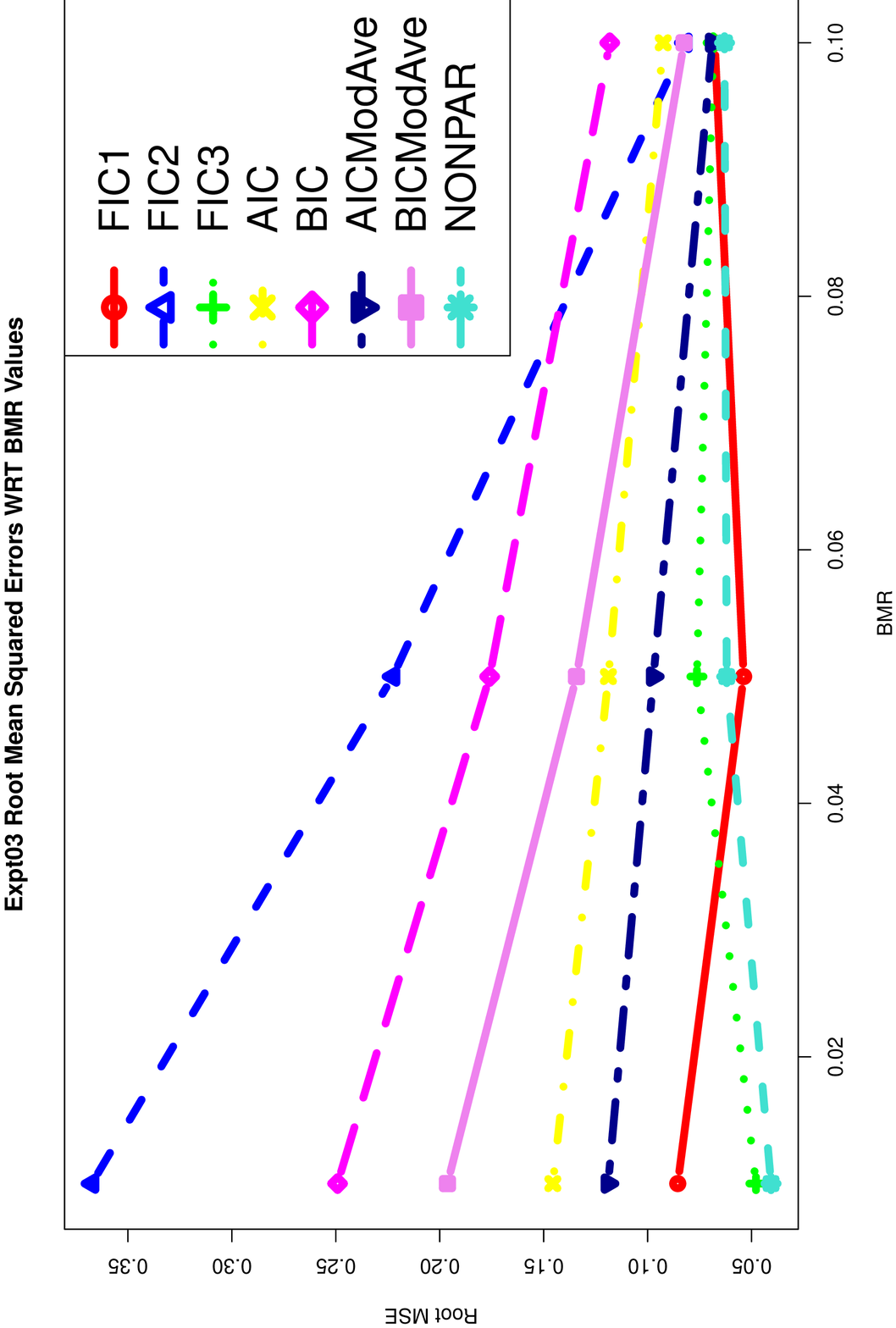} &
\includegraphics[width=\mywidth,height=\myheight,angle=-90]{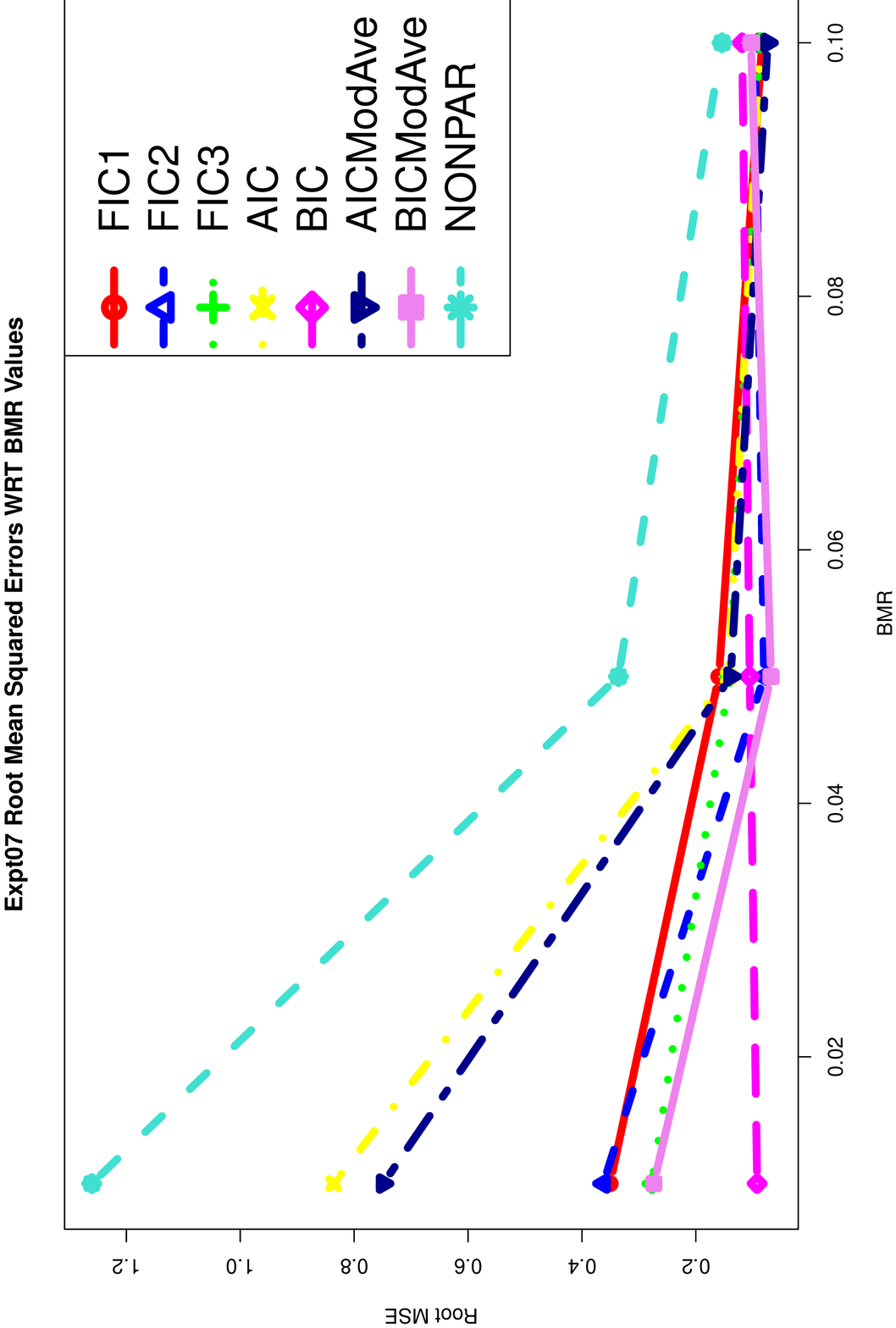} \\
\includegraphics[width=\mywidth,height=\myheight,angle=-90]{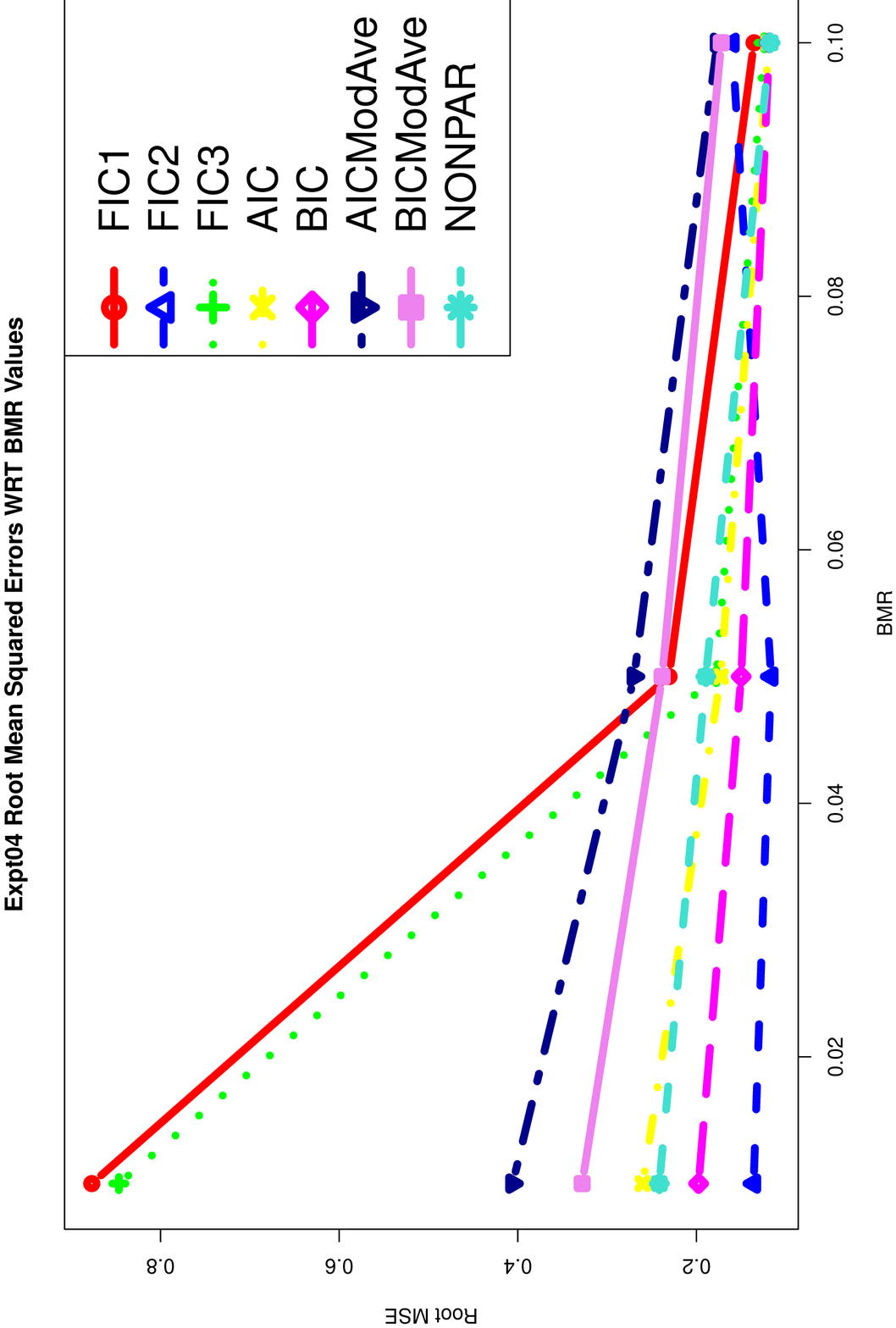} &
\includegraphics[width=\mywidth,height=\myheight,angle=-90]{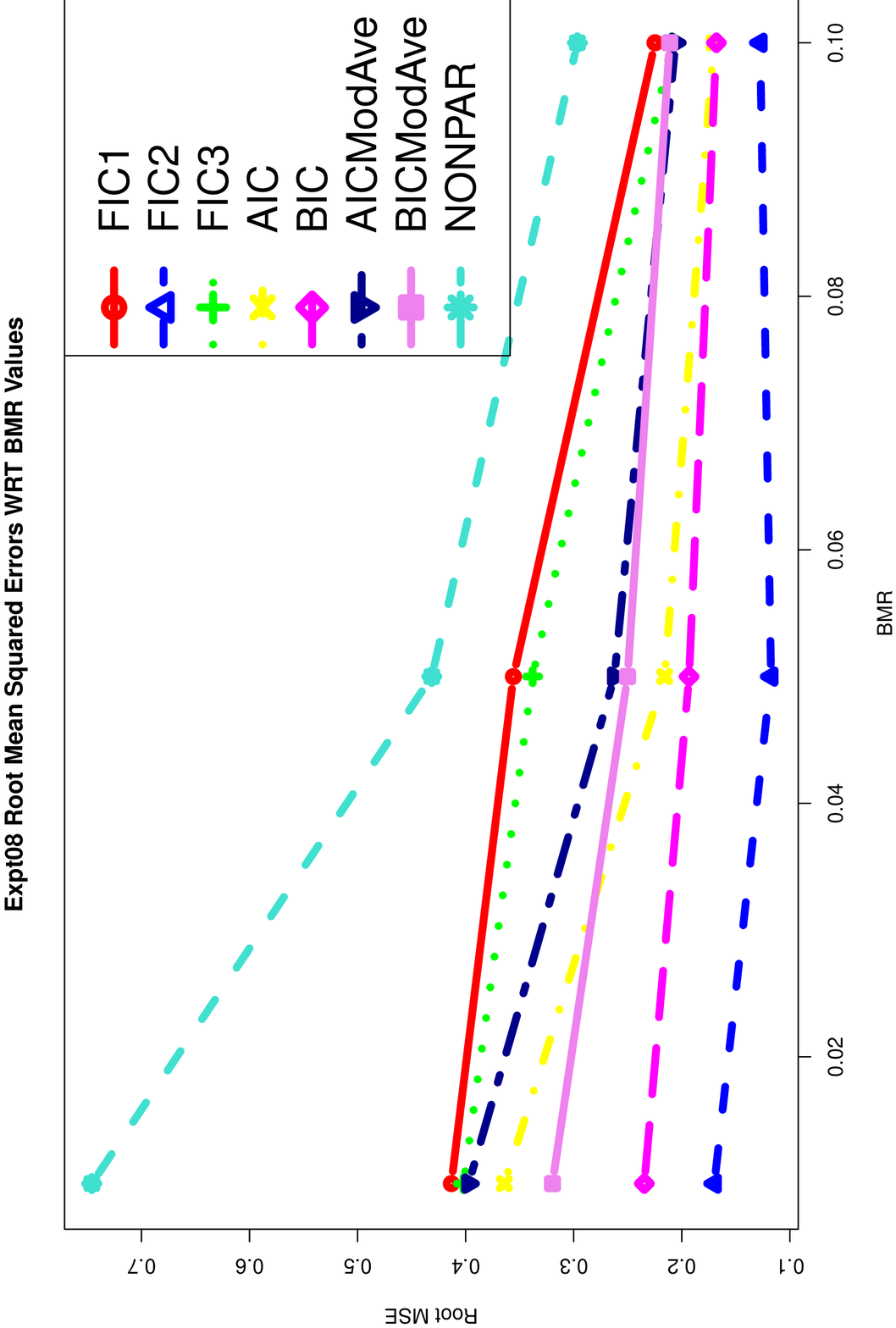} \\
\includegraphics[width=\mywidth,height=\myheight,angle=-90]{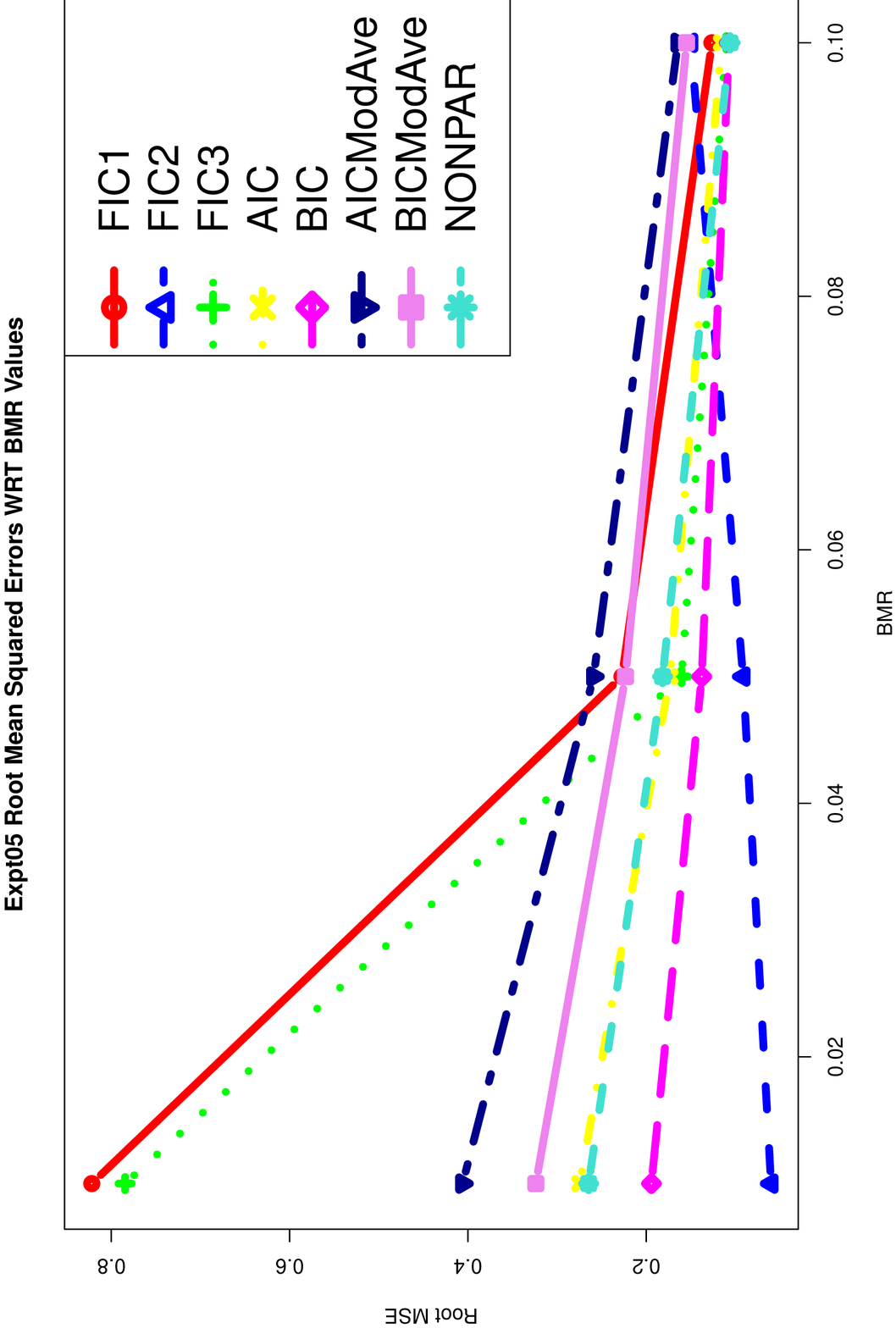} &
\includegraphics[width=\mywidth,height=\myheight,angle=-90]{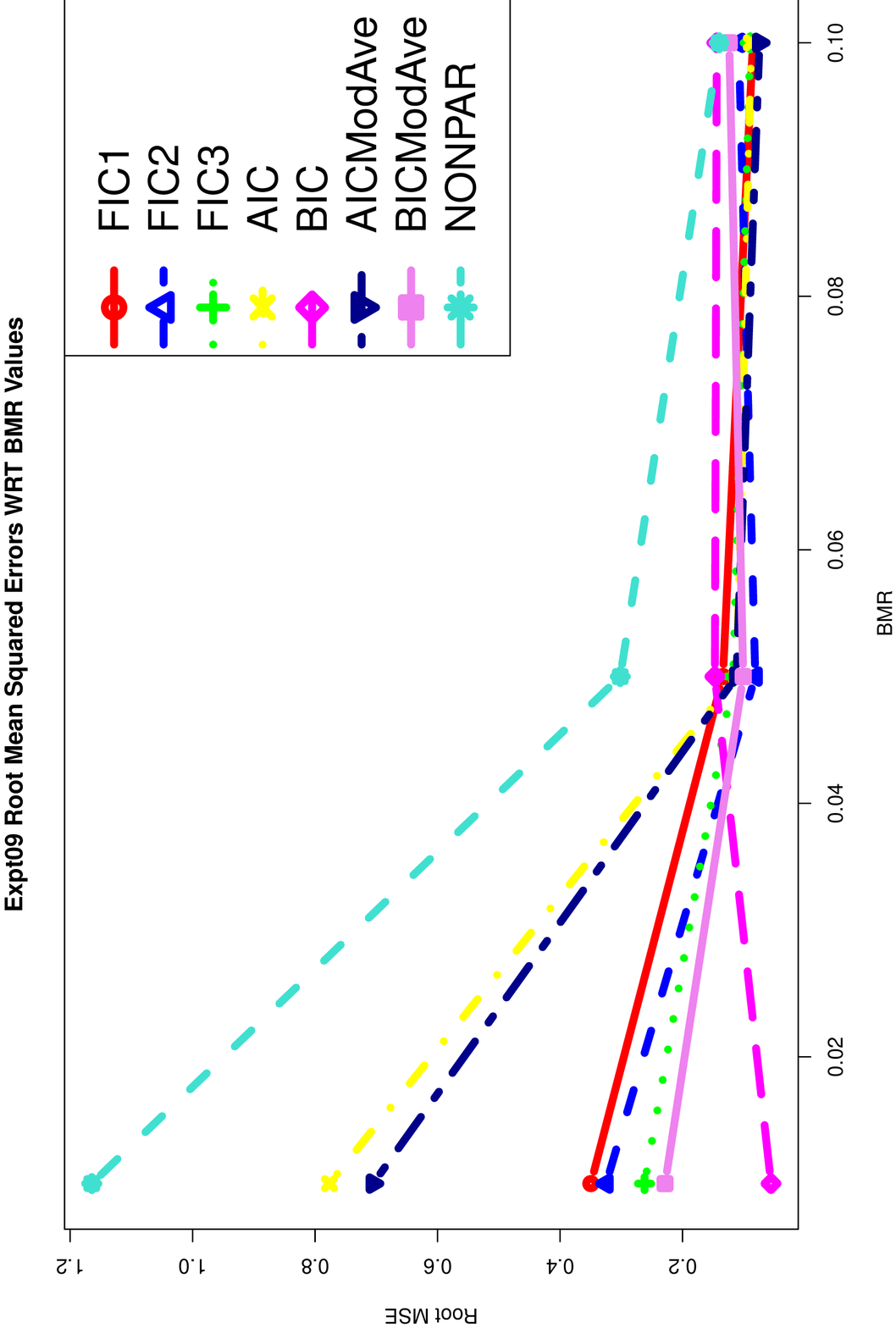} \\
\end{tabular}
\end{center}
\end{figure}

\newcommand{\mysmallwidth}{1.3in}
\newcommand{\mysmallheight}{1.15in}
\begin{figure}
\caption{Comparative bar charts of the five model selectors from table \ref{table: BMD Estimators}. Successive portions from bottom to top in each vertical position in a plot frame depict percentages that model classes LG1, LG2, MS1, and MS2 were selected, with the five vertical positions from left to right associated with selectors FIC1, FIC2, FIC3, AIC, and BIC. Blue bars correspond to the correct models. The two plot frames in each plot panel are for BMR $= 0.01$ (left) and $0.10$ (right). The eight plot panels going from top left to bottom left then top right to bottom right are for Experiments \#2 to \#9.}
\label{fig: model selection for expt 2 to 9}
\begin{center}
\begin{tabular}{|cc|cc|} \hline
\includegraphics[width=\mysmallwidth,height=\mysmallheight,angle=-90]{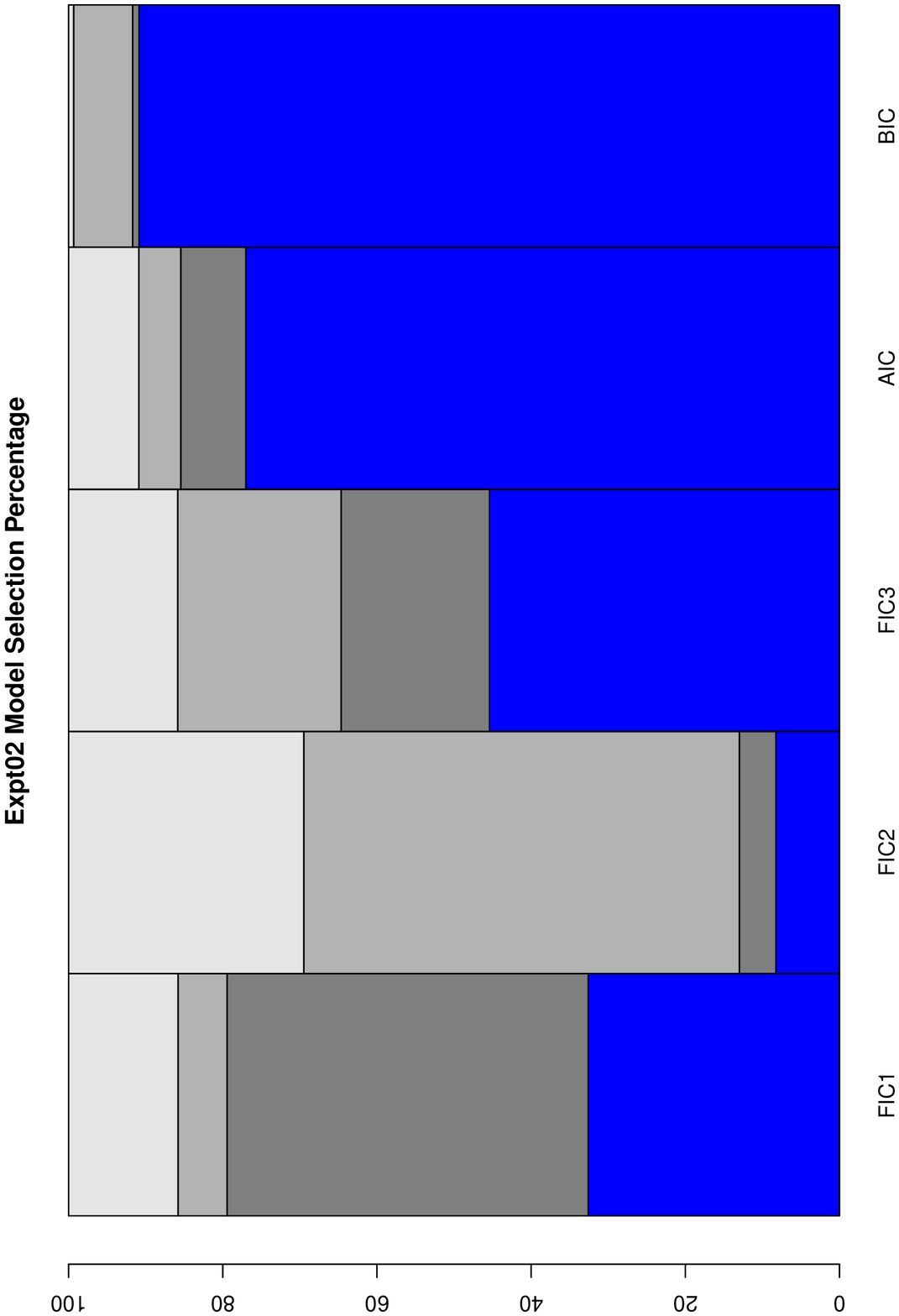} &
\includegraphics[width=\mysmallwidth,height=\mysmallheight,angle=-90]{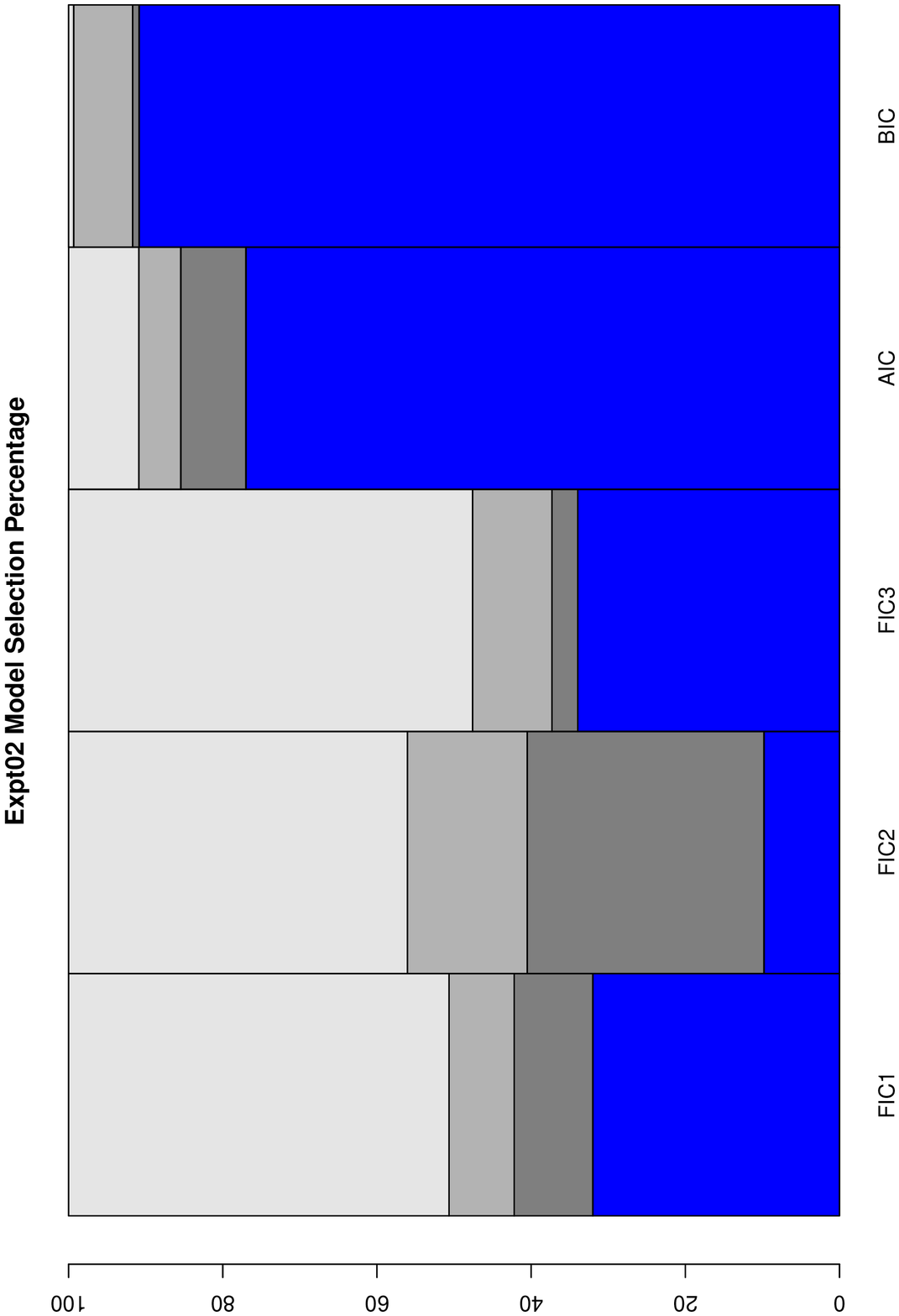} &
\includegraphics[width=\mysmallwidth,height=\mysmallheight,angle=-90]{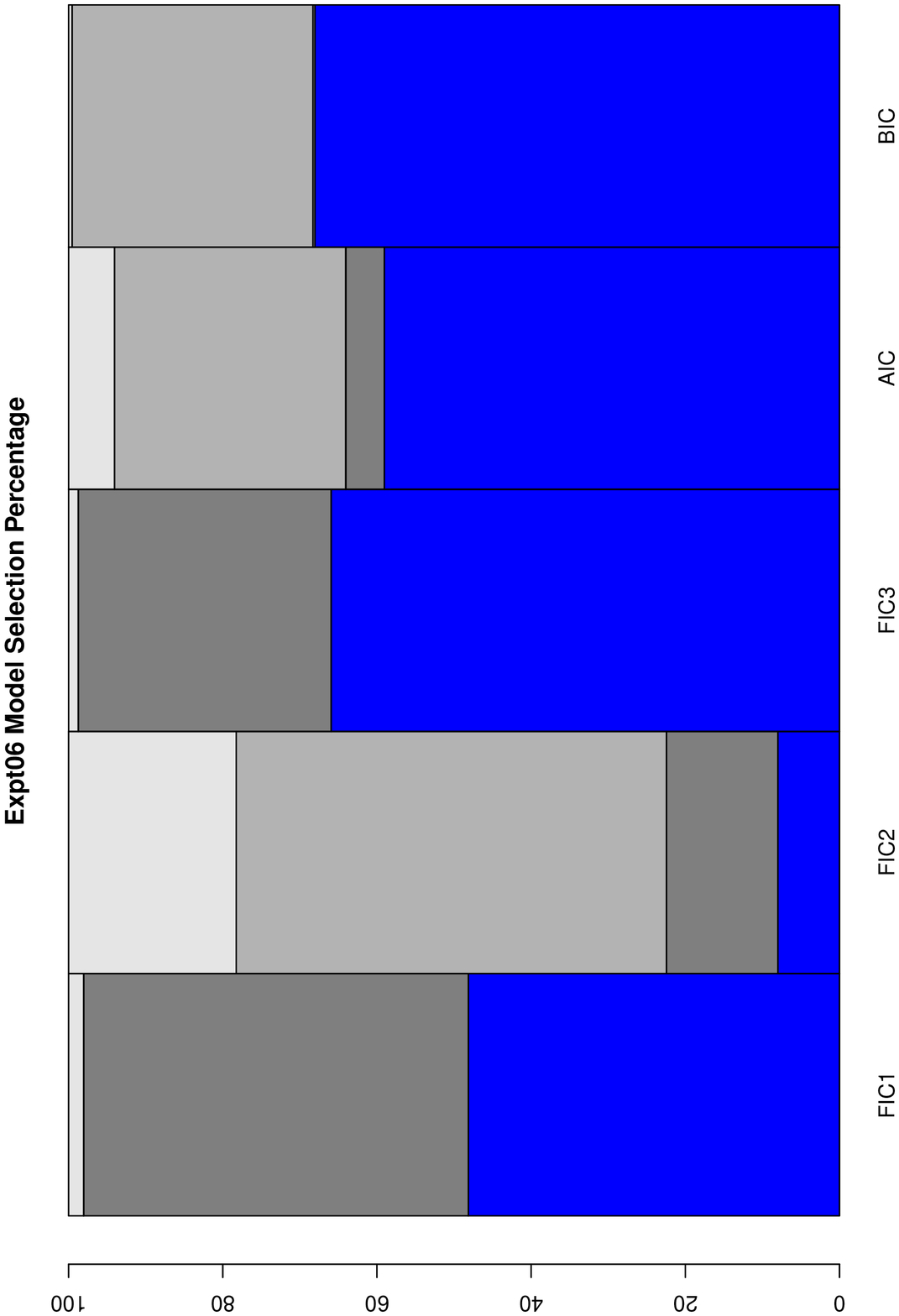} &
\includegraphics[width=\mysmallwidth,height=\mysmallheight,angle=-90]{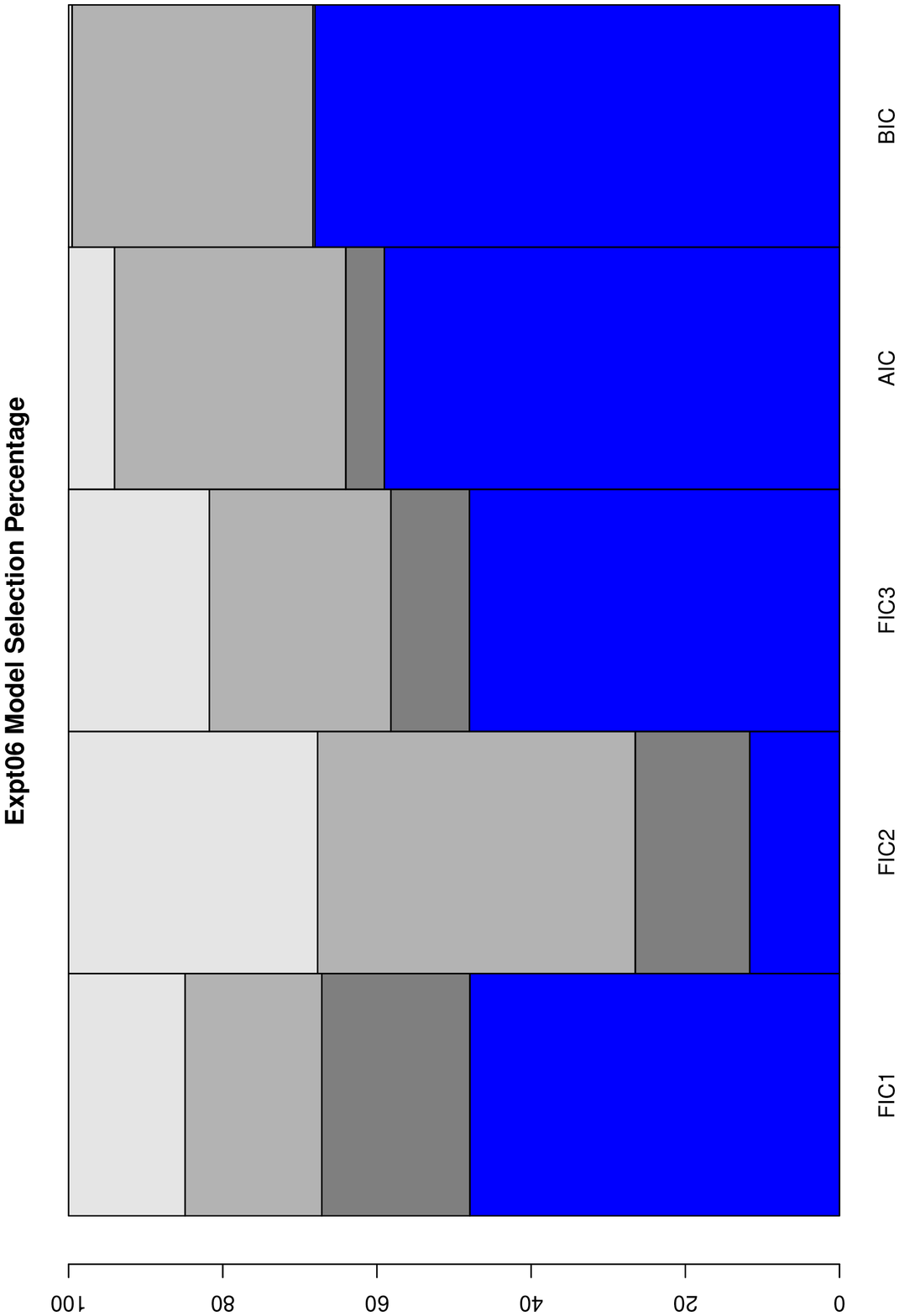} \\ \hline
\includegraphics[width=\mysmallwidth,height=\mysmallheight,angle=-90]{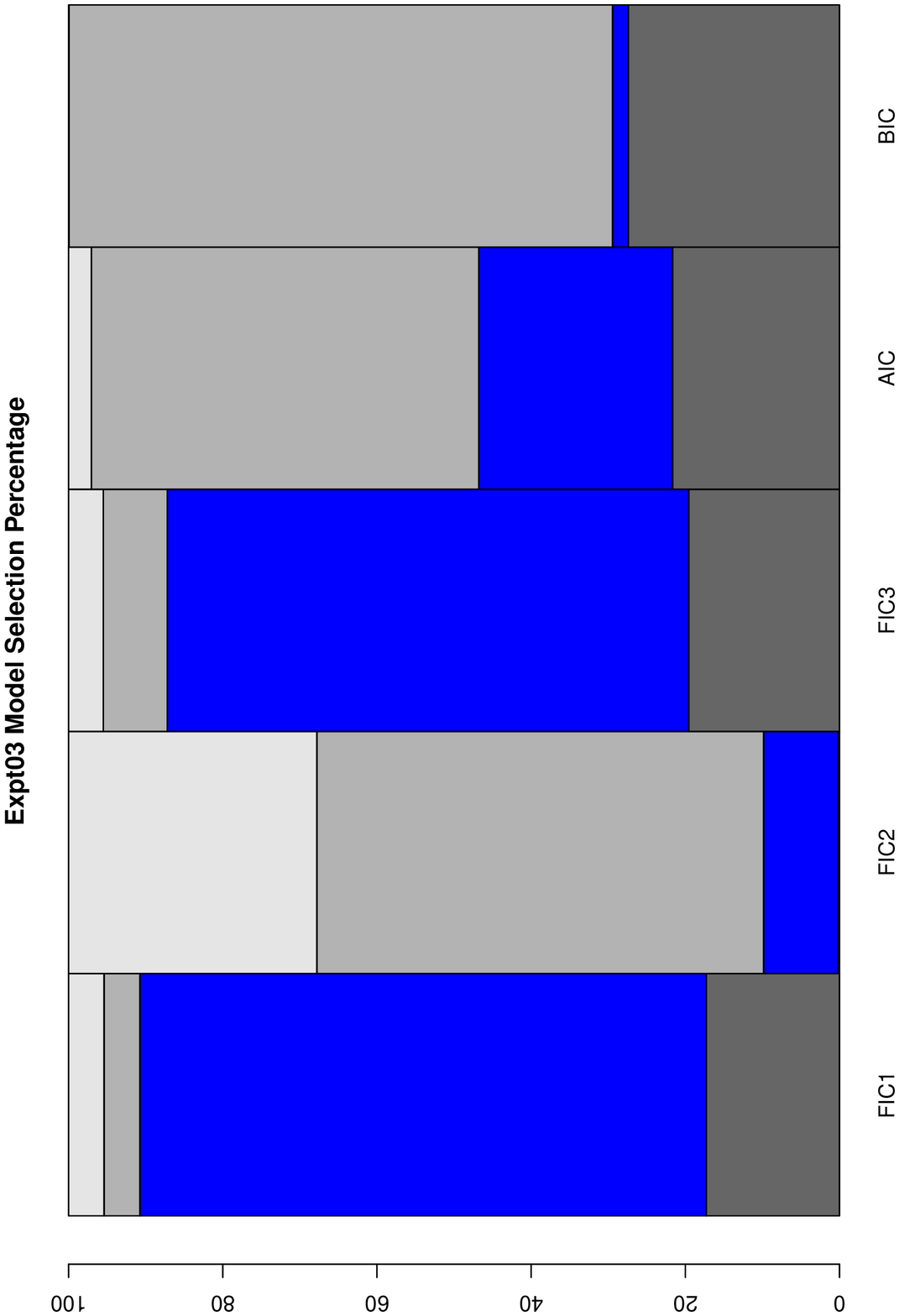} &
\includegraphics[width=\mysmallwidth,height=\mysmallheight,angle=-90]{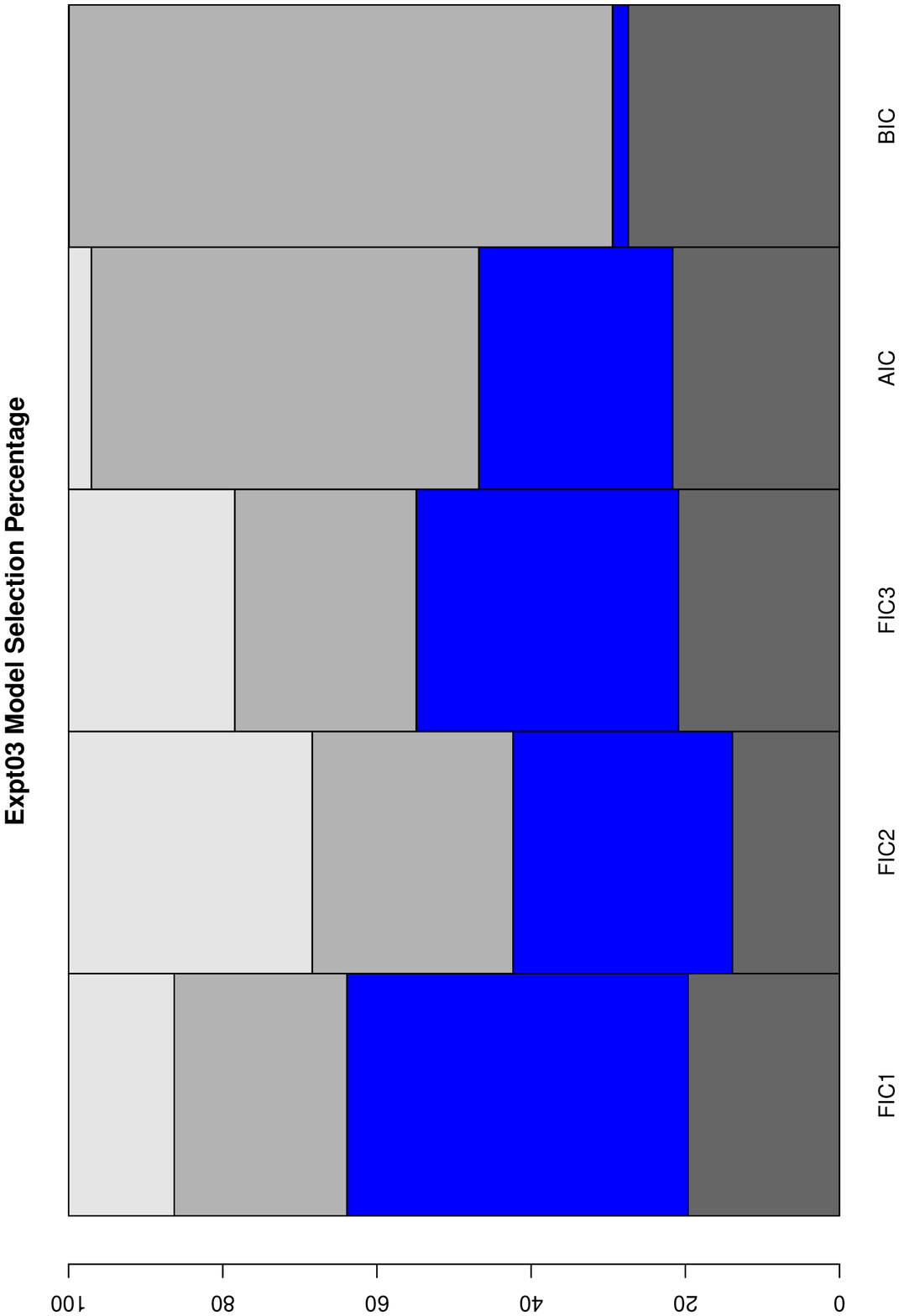} &
\includegraphics[width=\mysmallwidth,height=\mysmallheight,angle=-90]{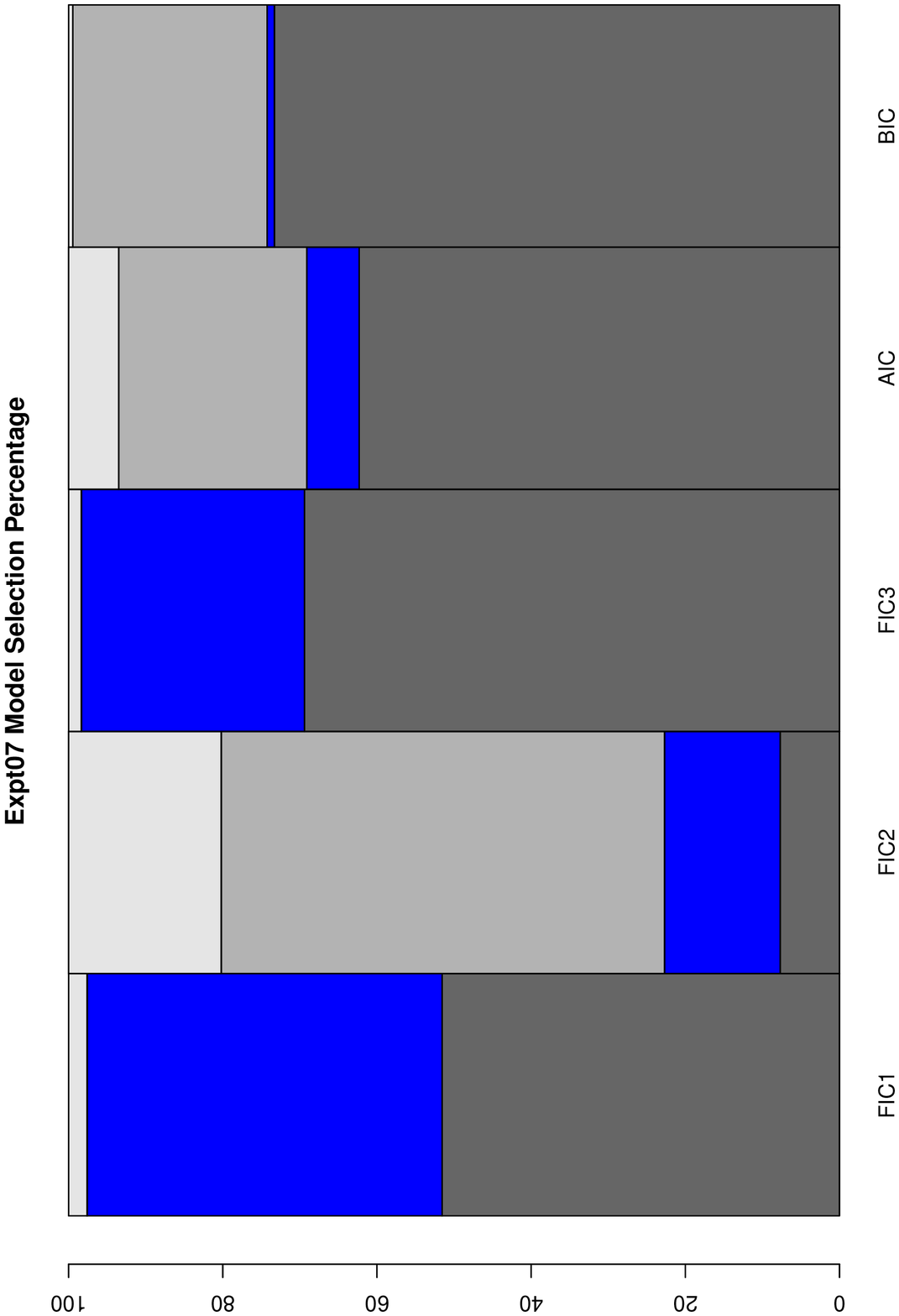} &
\includegraphics[width=\mysmallwidth,height=\mysmallheight,angle=-90]{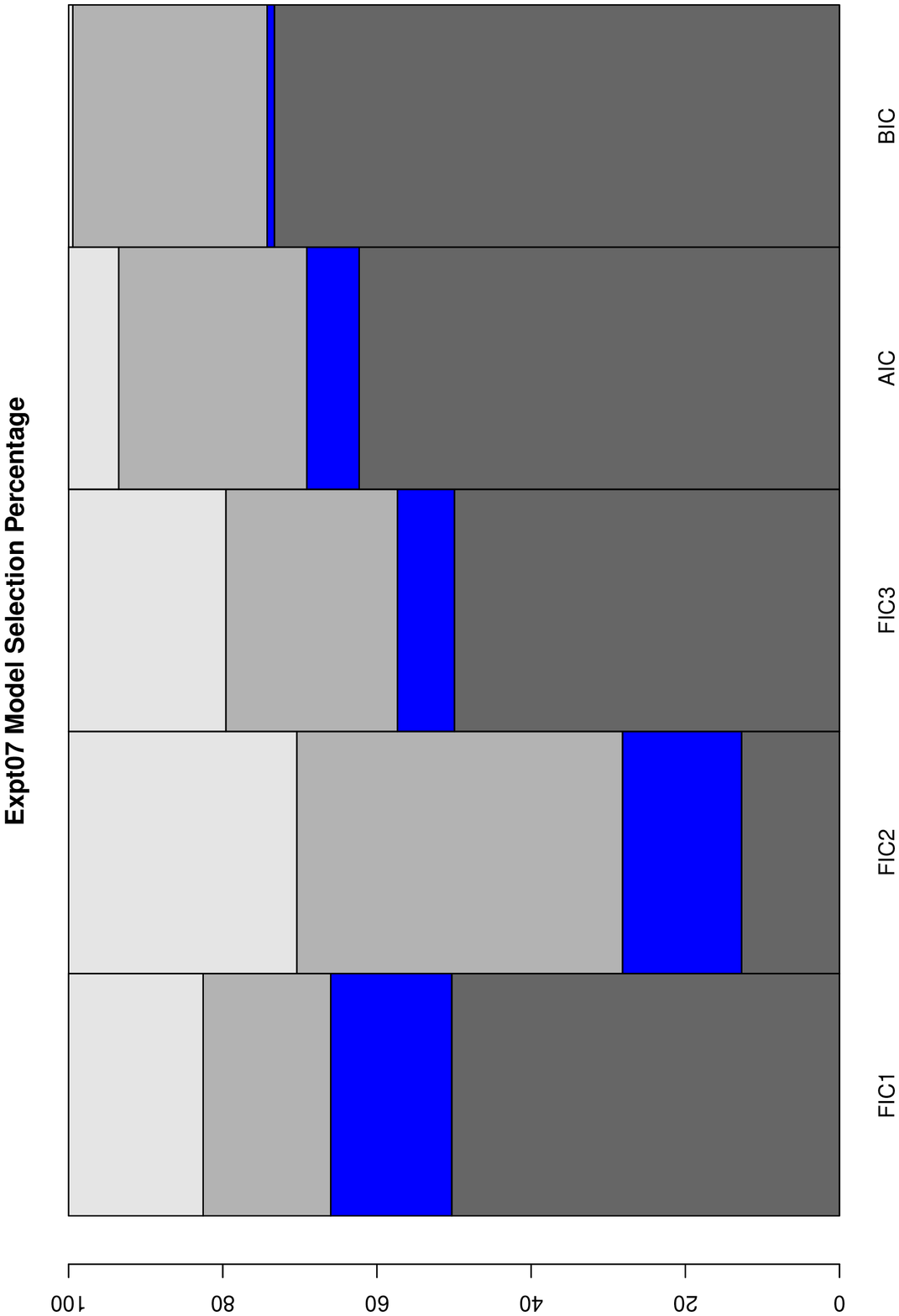} \\ \hline
\includegraphics[width=\mysmallwidth,height=\mysmallheight,angle=-90]{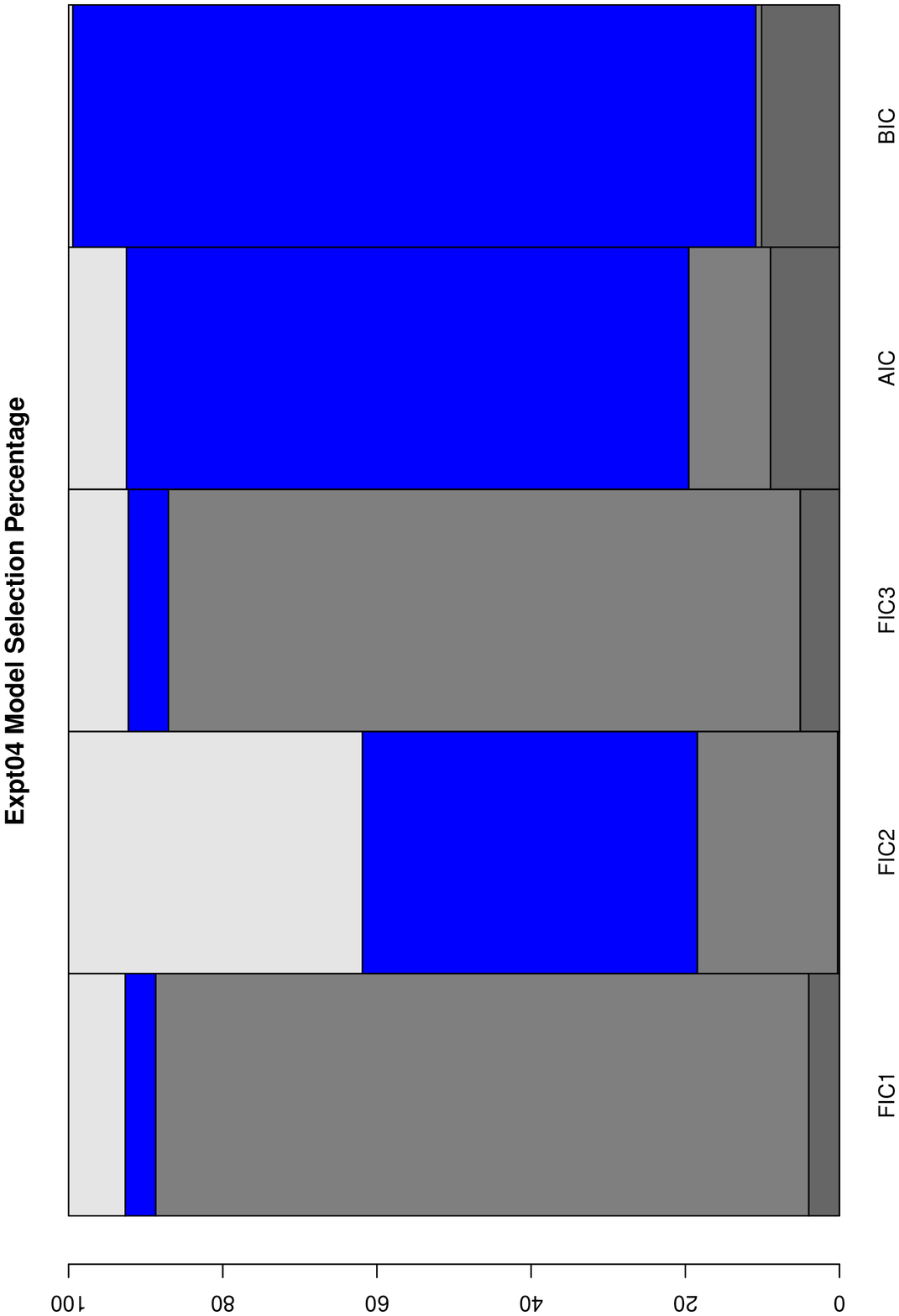} &
\includegraphics[width=\mysmallwidth,height=\mysmallheight,angle=-90]{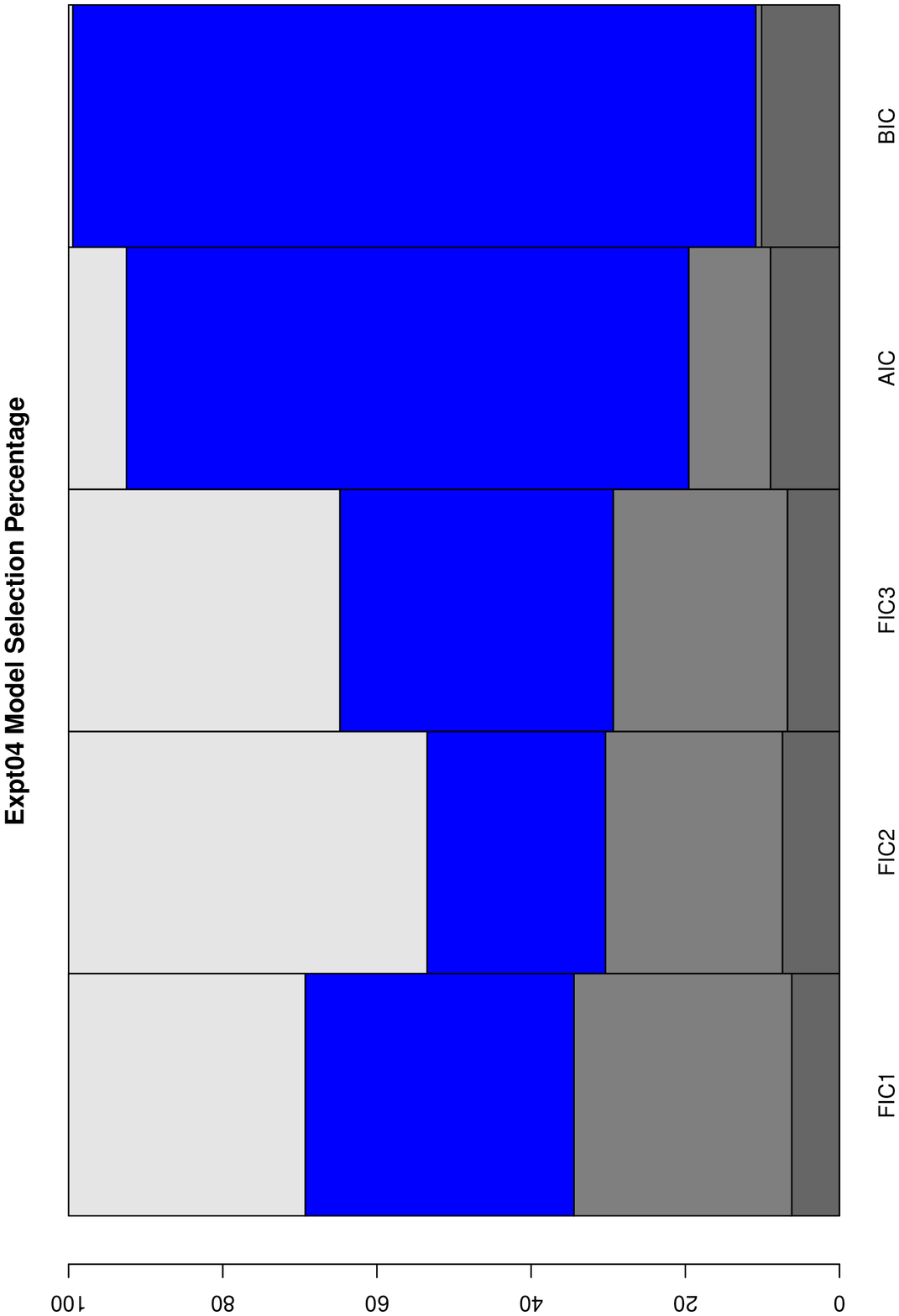} &
\includegraphics[width=\mysmallwidth,height=\mysmallheight,angle=-90]{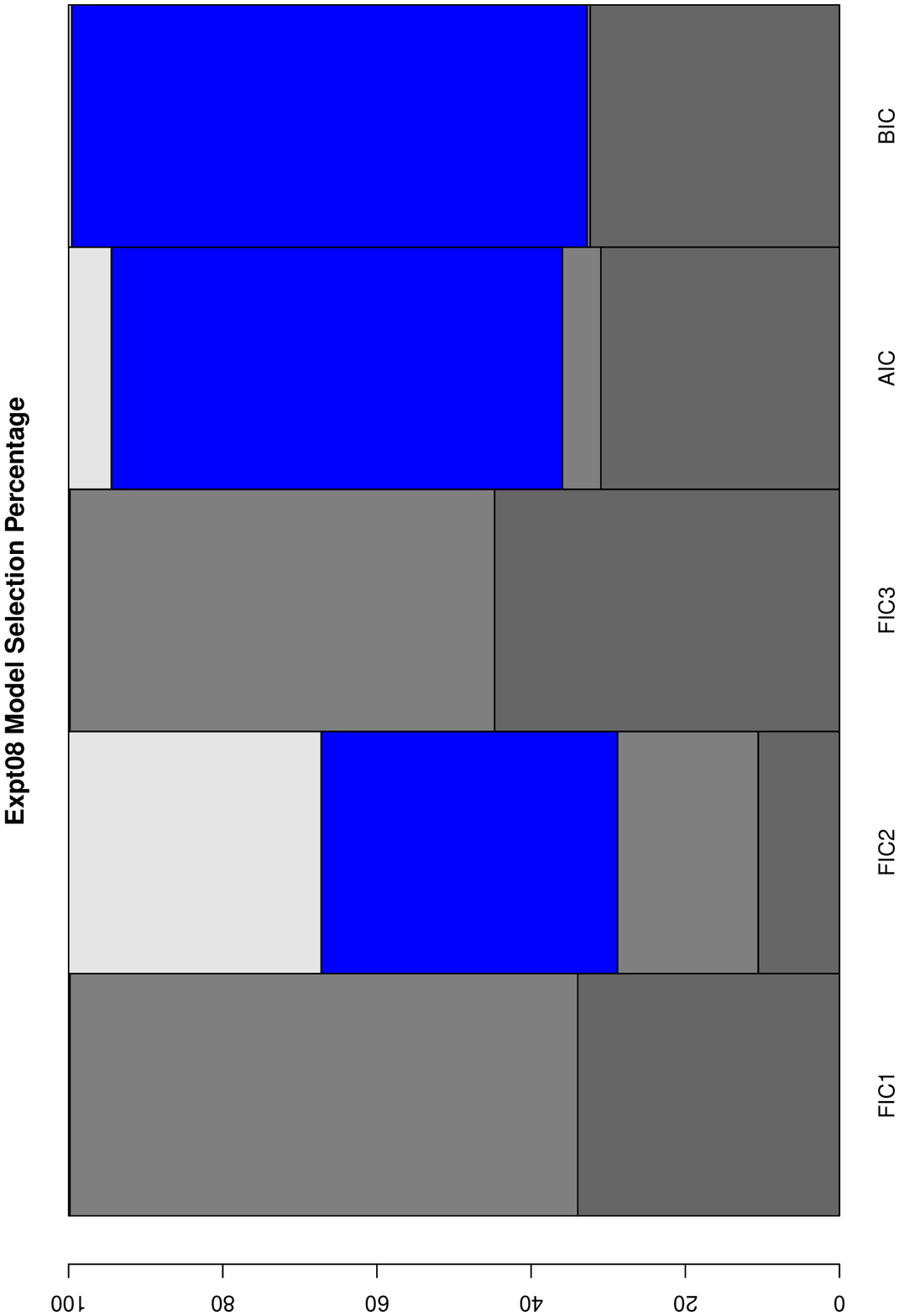} &
\includegraphics[width=\mysmallwidth,height=\mysmallheight,angle=-90]{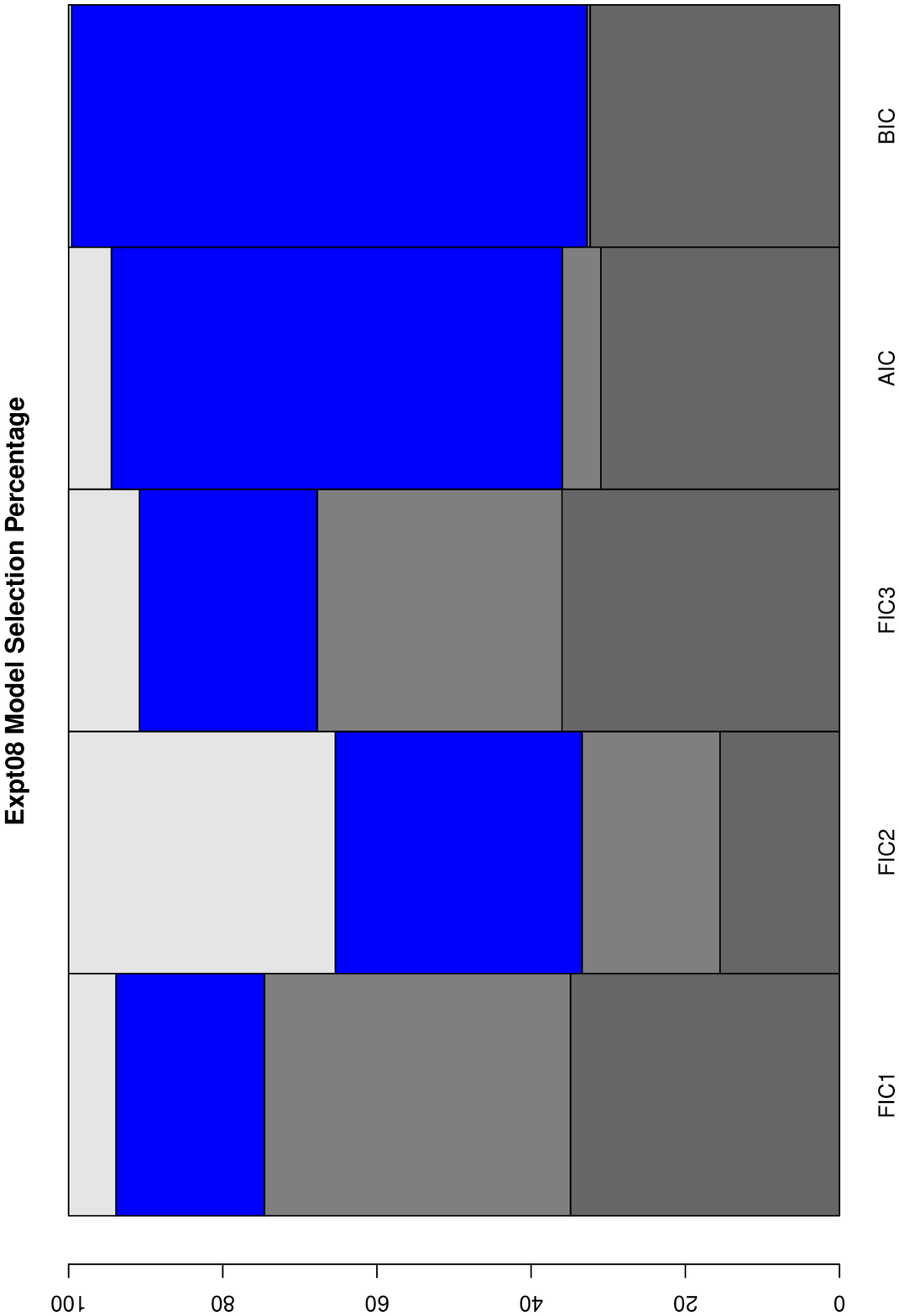} \\ \hline
\includegraphics[width=\mysmallwidth,height=\mysmallheight,angle=-90]{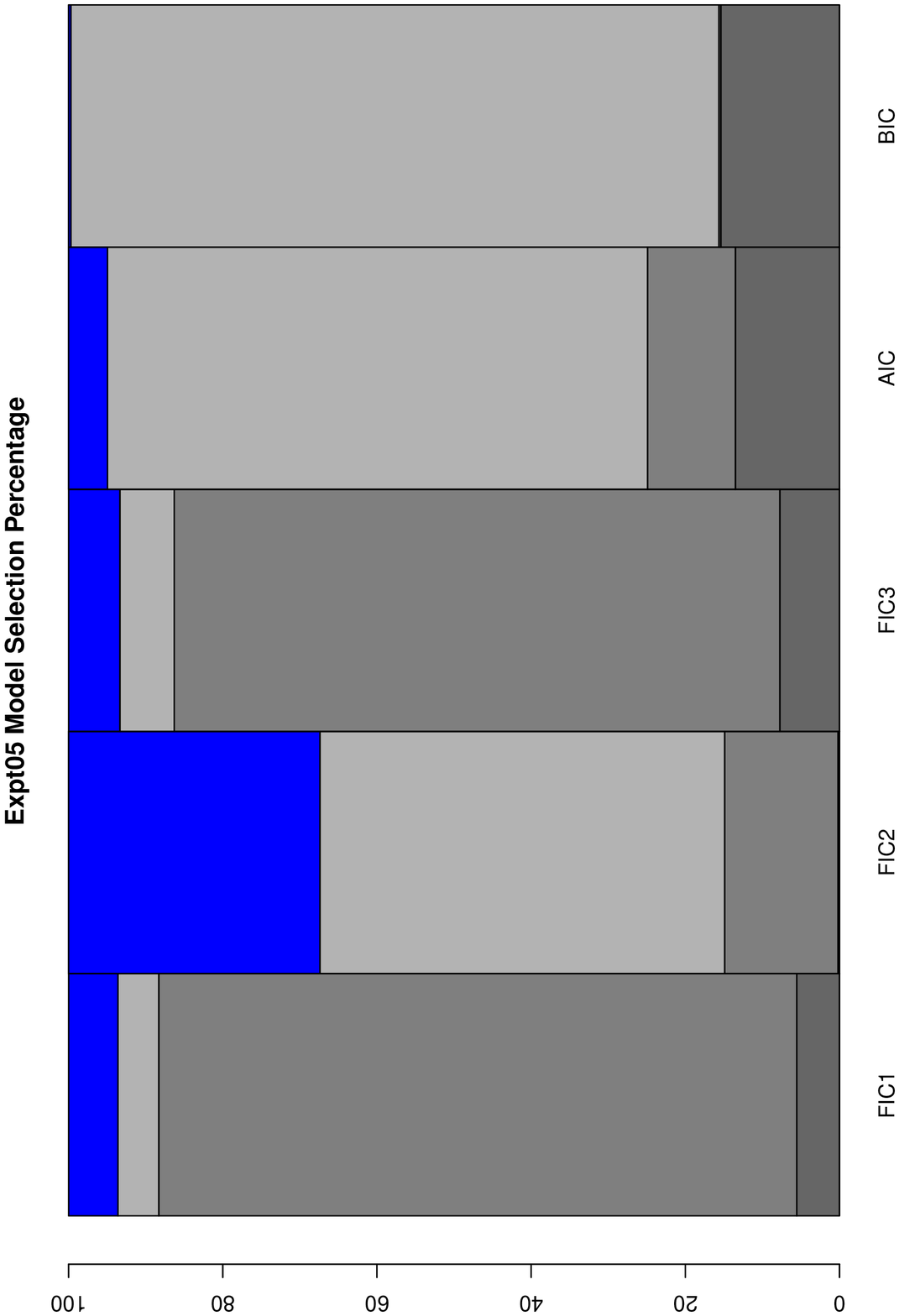} &
\includegraphics[width=\mysmallwidth,height=\mysmallheight,angle=-90]{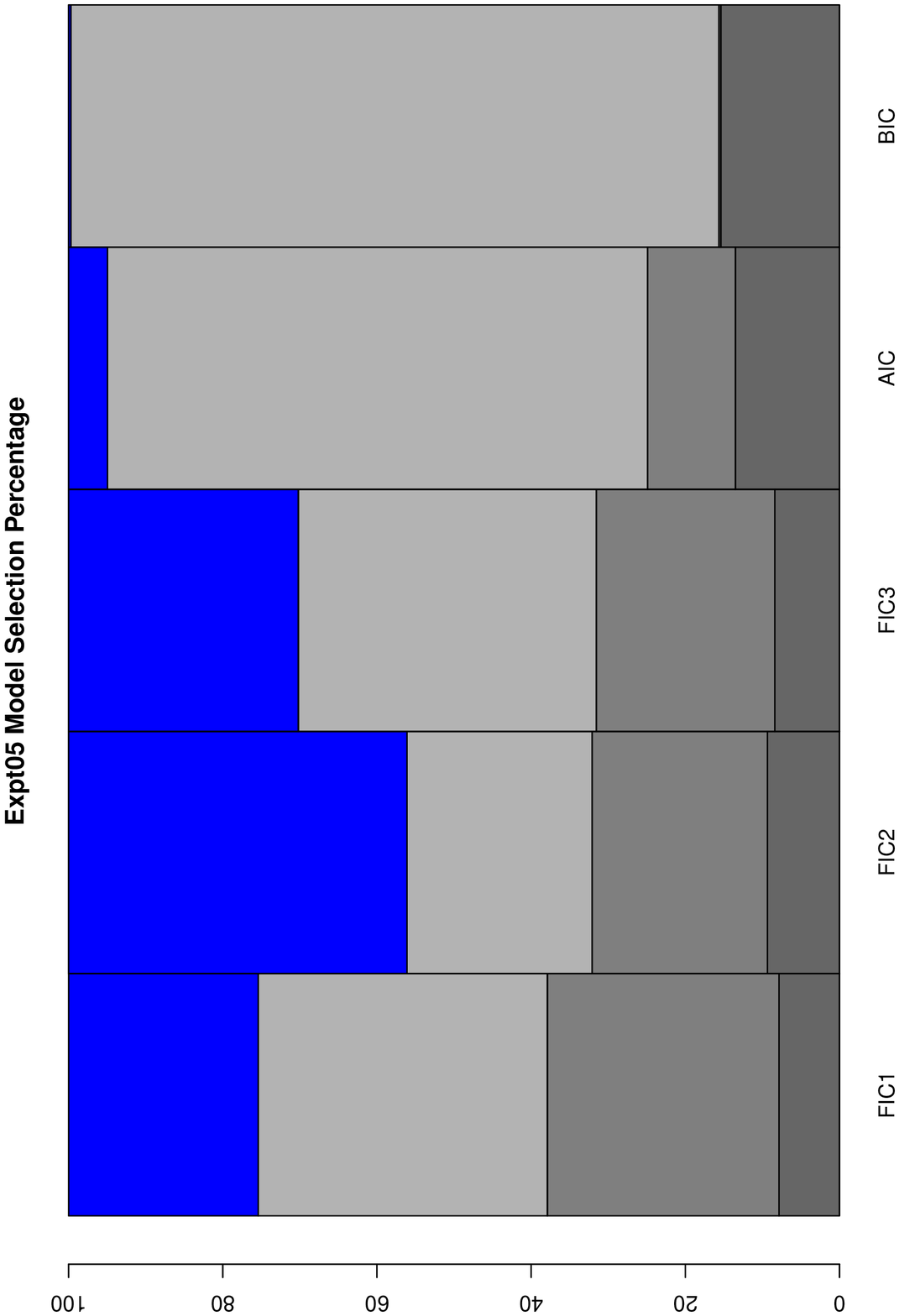} &
\includegraphics[width=\mysmallwidth,height=\mysmallheight,angle=-90]{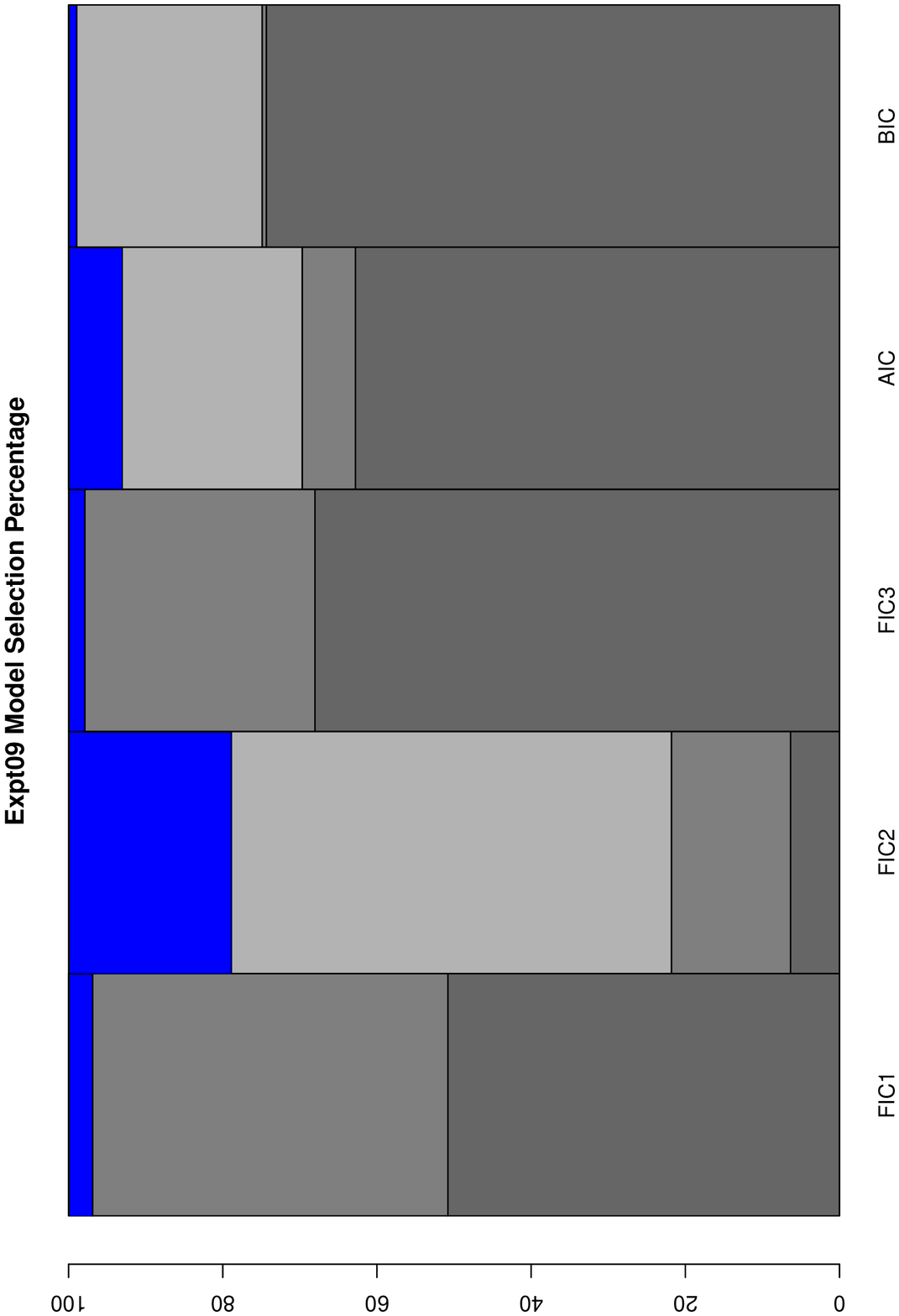} &
\includegraphics[width=\mysmallwidth,height=\mysmallheight,angle=-90]{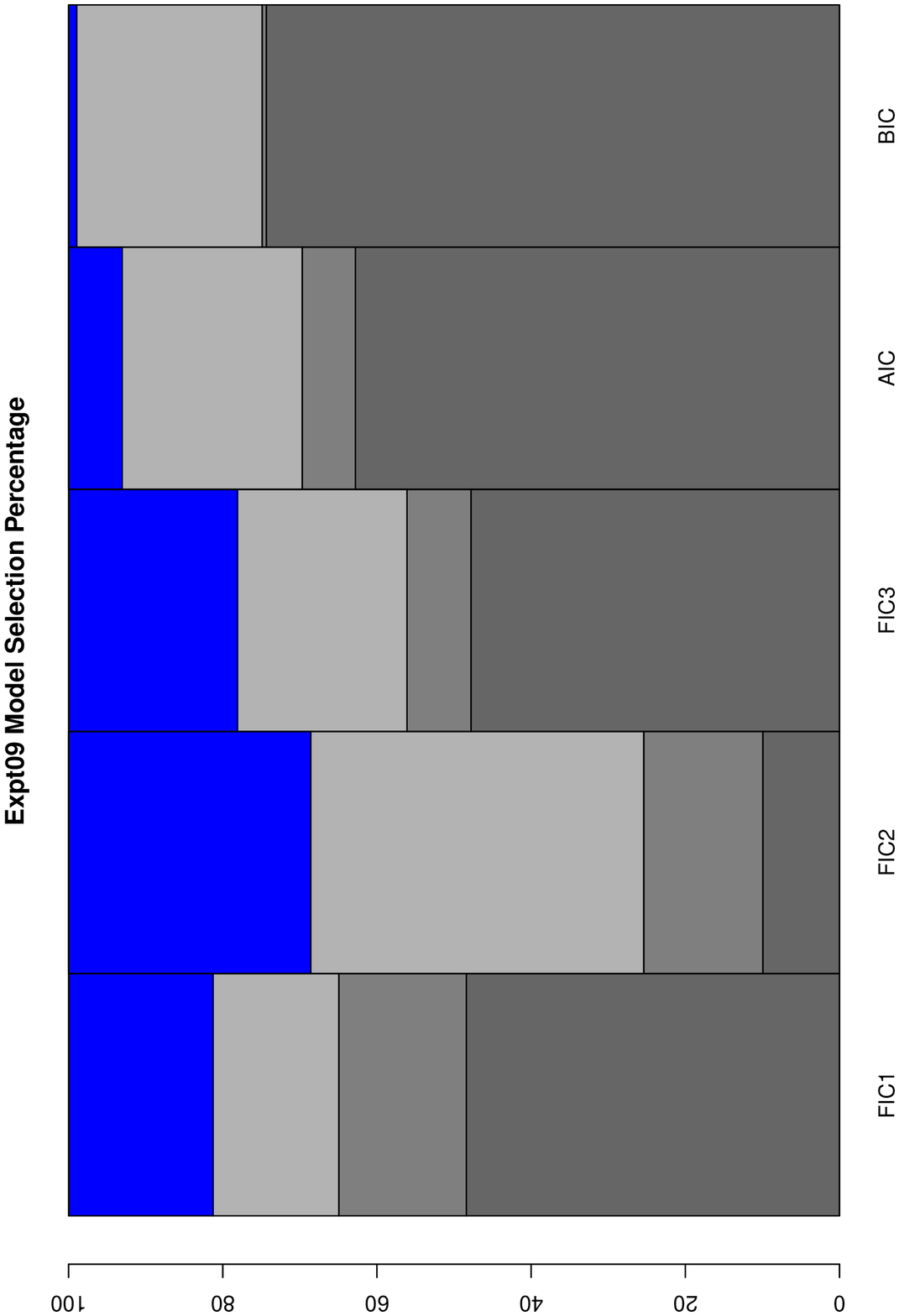}  \\ \hline
\end{tabular}
\end{center}
\end{figure}

Examining Figure \ref{fig: RMSE plots for experiment 2 to 9} and Figure \ref{fig: model selection for expt 2 to 9}, we find that the eight BMD estimators and five model selectors exhibit varied performance among the eight simulation experiments. None of these estimators and model selectors could be said to totally dominate the others. The BIC-based estimators appeared to possess the most stable performance over the three BMR values. In essence, which estimator performs best depended to a large extent on the shape of the true dose-response function. Generally, the FIC2 and BIC-based estimators tended to perform similarly, whereas the FIC1, FIC3, AIC-based, and the nonparametric estimators tended to have comparable RMSE plot patterns. It is also interesting to note that the BIC-model averaged estimators did not always dominate the BIC-two step estimator, and the same could be said for the AIC-based estimators, though for the latter the RMSEs tended to be closer. Surprisingly, the FIC2 estimator appeared to perform well when the true model class is multistage, and poorly when the true model class is logistic. This could be because the FIC2 model selector tended to choose the multistage model class (cf.~Figure \ref{fig: model selection for expt 2 to 9}). Also, we note that the AIC and BIC model selectors tended to choose lower-order models,
as expected \citep{NeaCav12},
though not necessarily lower-order models {\em{in}} the true model class. More importantly, we call attention to the fact that the BIC model selector hardly ever chose the correct model class when the true model class is multistage of order 2 (MS2), but at the same time its associated BMD estimators performed quite well in terms of RMSE!
This seems to indicate that, perhaps, in the
context of estimating relevant parametric functionals when there are several competing model classes, it is
not {\em so} crucial that the model selector involved in two-step procedures be able to choose the correct model,
but rather that its associated parameter estimator perform well in estimating the
functional of interest.

\section{Concluding Remarks}
\label{sec:Conclusions}

This paper has provided several strategies for estimating the benchmark dose (BMD)
in quantal-response studies when the dose-response function may
be thought to
belong to several
competing model classes. Two-stage type procedures, wherein a model is first chosen and
then an estimate is obtained within the chosen model, are described as arising from the
most common
information measures, the AIC and BIC, and also from a focused-inference
approach which relies centrally on the Kullback-Leibler divergence. Model-averaging
type procedures are also described, which are characterized by combining estimates
from the different model classes according to appropriate data-dependent weights.

The model selection procedures and BMD estimators are illustrated using a
carcinogenecity data set.
Through simulation studies, the performance of the different model selectors
and BMD estimators are compared. A nonparametric BMD estimator based on an
empirical estimator of the dose-response function obtained by applying the
pooled-adjacent-violators algorithm (PAVA) on the empirical probabilities at
each dose level \citep{PieXioBha12} was also included among the estimators
that were compared. An interesting phenomenon is that with two-step procedures,
in order for the BMD estimator to perform well in terms of RMSE, it
does not appear
imperative for the associated model selector to be able to choose with high
probability the true generating model. This was particularly evident with the
BIC-based procedures where the BIC model selector did not perform well with
respect to
choosing the true
generating model, but its associated
BMD estimators, both in two-step and model-averaged versions,
exhibited competitive RMSEs.
Of course it should be recognized that the limited simulation study
we have performed is
insufficient to make truly definitive conclusions. Clearly, further examinations and
comparisons of these different model selectors and BMD estimators are warranted
to obtain more definitive conclusions, especially when there are more than two model types.

Finally, we
emphasize further that extreme caution is called-for when assessing the properties of
estimates, where `double-dipping' of the data leads to inferential instabilities.
In particular, additional studies
will be needed to ascertain the impact of model selection on the
distributional properties.
For instance,
the standard errors of the resulting BMD estimators,
or small-sample coverage of any confidence regions based on these estimators,
will be important to determine;
see, for instance, the recent papers by \cite{WesPiePenAnWuEtAl12}
and \cite{PieAnWicWesPenWu13}. Such results will have important bearing in the
construction of
statistical inferences on
the BMD.

\section*{Acknowledgements}

We acknowledge the following research grants, which partially supported this research: EPA Grant RD-83241902, NSF Grants
DMS 0805809 and DMS 1106435, and NIH Grants 2 P20 RR17698, R01 CA154731, R21 ES016791, and  1 P30 GM103336-01A1.



\vspace*{-8pt}

\bibliography{DoseResponse}
\bibliographystyle{plain}




\end{document}